\theoremstyle{plain}
\newtheorem{theorem}{Theorem}[section]
\newtheorem{lemma}[theorem]{Lemma}
\newtheorem{corollary}[theorem]{Corollary}
\newtheorem{proposition}[theorem]{Proposition}
\theoremstyle{remark}
\newtheorem{remark}[theorem]{Remark}
\newtheorem{example}[theorem]{Example}
\newtheorem*{note*}{Note}
\newtheorem*{remark*}{Remark}
\newtheorem*{example*}{Example}
\theoremstyle{definition}
\newtheorem*{definition*}{Definition}
\newtheorem{definition}[theorem]{Definition}
\newcommand{\Fitt}{\mathrm{Fitt}}
\newcommand{\Ann}{\mathrm{Ann}}
\newcommand{\GL}{\mathrm{GL}}
\newcommand{\Hom}{\mathrm{Hom}}
\newcommand{\Aff}{\mathrm{Aff}}
\newcommand{\Nrd}{\mathrm{Nrd}}
\newcommand{\Aut}{\mathrm{Aut}}
\newcommand{\End}{\mathrm{End}}
\newcommand{\Quot}{\mathrm{Quot}}
\newcommand{\id}{\mathrm{id}}
\newcommand{\Ext}{\mathrm{Ext}}
\newcommand{\cl}{\mathrm{cl}}
\newcommand{\Char}{\mathrm{Char}}
\newcommand{\aug}{\mathrm{aug}}
\newcommand{\Z}{\mathbb{Z}}
\newcommand{\Q}{\mathbb{Q}}
\newcommand{\N}{\mathbb{N}}
\newcommand{\F}{\mathbb{F}}
\renewcommand{\det}{\mathrm{det}}
\numberwithin{equation}{section}
\title[Noncommutative Fitting invariants]{Notes on 
	Noncommutative Fitting invariants}
\author[Andreas Nickel]{Andreas Nickel\\ \\ 
	\tiny{with an appendix by Henri Johnston and Andreas Nickel}}
\address{
	Department of Mathematics\\
	University of Exeter\\
	Exeter\\
	EX4 4QF\\
	United Kingdom
}
\email{H.Johnston@exeter.ac.uk}
\urladdr{http://emps.exeter.ac.uk/mathematics/staff/hj241}
\address{Universit\"{a}t Duisburg--Essen\\
	Fakult\"{a}t f\"{u}r Mathematik\\
	Thea-Leymann-Str. 9\\
	45127 Essen\\
	Germany}
\email{andreas.nickel@uni-due.de}
\urladdr{https://www.uni-due.de/$\sim$hm0251/english.html}
\date{Version of 10th September 2018}
\begin{document}
	
\begin{abstract}
To each finitely presented module $M$ over a commutative ring $R$
one can associate an $R$-ideal $\Fitt_{R}(M)$, which is called the (zeroth) Fitting ideal of $M$ over $R$.
This is of interest because it is always contained in the $R$-annihilator $\Ann_{R}(M)$ of $M$, but is often much easier to compute.
This notion has recently been generalised to that of so-called `Fitting invariants' over certain noncommutative rings;
the present author considered the case in which $R$ is an $\mathfrak{o}$-order $\Lambda$ in a finite dimensional separable algebra,
where $\mathfrak{o}$ is an integrally closed commutative noetherian complete local domain.
This article is a survey of known results and open problems in this context. 
In particular, we investigate the behaviour of Fitting invariants under direct sums. 
In the appendix, we present a new approach to Fitting invariants via Morita equivalence.
\end{abstract}

\maketitle

\section*{Introduction}
Let $R$ be a commutative unitary ring and let $M$ be a finitely presented 
$R$-module. This means that there is an exact sequence
\begin{equation} \label{eqn:finite-presentation-comm}
R^a \stackrel{h}{\longrightarrow} R^b \longrightarrow
M \longrightarrow 0,
\end{equation}
where $a$ and $b$ are positive integers.
In other words, the module $M$ is finitely generated and there is
a finite number of relations between the generators.
This information is incorporated in the $a \times b$ matrix $h$.
Note that every finitely generated module over a noetherian ring
is indeed finitely presented.

Suppose that $a \geq b$. Then the (zeroth) Fitting ideal of $M$
over $R$ is defined to be the $R$-ideal generated by all $b \times b$
minors of the matrix $h$:
\[
\Fitt_R(M) := \langle \det(H) \mid H \in S_b(h) \rangle_R,
\]
where $S_b(h)$ denotes the set of all $b \times b$ submatrices of $h$.
In the case $a<b$ one simply puts $\Fitt_R(M) := 0$. This notion was
introduced by the German mathematician 
Hans Fitting \cite{Fitting-DMV} who showed that it is
in fact independent of the chosen finite presentation $h$.
Fitting was a student of Emmy Noether and is also famous for his
contributions to group theory.

Fitting ideals became an important tool in commutative algebra.
A key property is that the Fitting ideal of $M$ is always contained in the
$R$-annihilator ideal of $M$, but in many cases is much easier to
compute. For instance, it behaves well under epimorphisms,
certain exact sequences, and direct sums of
$R$-modules: if $M$ and $N$ are two finitely presented $R$-modules,
then one has an equality
\begin{equation} \label{eqn:additivity-comm}
	\Fitt_R(M \oplus N) = \Fitt_R(M) \cdot \Fitt_R(N).
\end{equation}
For a full account of the theory, we refer the reader to
\cite{MR0460383}.

Fitting ideals have many applications in number theory. 
We give two typical examples. If $L/K$
is a finite Galois extension of number fields with Galois group $G$,
then the class group $\cl_L$ of $L$ has a natural structure as a module
over the group ring $\Z[G]$. If $p$ is a prime then the
$p$-part $\Z_p \otimes_{\Z} \cl_L$ of the class group is a module
over the $p$-adic group ring $\Z_p[G]$.

Now assume that $p$ is odd and that $L/K$ is a CM-extension.
If $G$ is abelian then Greither \cite{MR2371374} has computed the
Fitting ideal of the Pontryagin dual of the minus part of
$\Z_p \otimes_{\Z} \cl_L$ via the equivariant Tamagawa number conjecture.
This gives strong evidence for a conjecture of Brumer that asserts that
certain `Stickelberger elements' (constructed from values at zero of Artin
$L$-functions attached to the irreducible characters of $G$)
annihilate the class group.

Fitting ideals also appear in Iwasawa theory. The formulation and the
proof of the (classical) main conjecture for
totally real fields by Wiles \cite{MR1053488} makes heavy use
of the close relation between characteristic ideals and Fitting ideals
of Iwasawa modules.

Fitting ideals of arithmetic objects are also interesting 
in their own right. Kurihara
and Miura \cite{MR2805636} showed that (away from the $2$-primary part) the 
Fitting ideal of the minus class group
of an absolutely abelian imaginary number field coincides with
the Stickelberger ideal (as conjectured by Kurihara
\cite{MR1998607}). This gives the precise arithmetical
interpretation of the latter ideal.

As Galois groups are in general non-abelian, it is natural to ask
whether analogous invariants can be defined for modules over
noncommutative rings such as $p$-adic group rings.
Grime considered several cases in his PhD thesis \cite{grime_thesis},
including matrix rings over commutative rings. We will describe an
approach to noncommutative Fitting invariants over rings that
are Morita equivalent to a commutative ring in the appendix.
This is essentially a generalisation of Grime's approach (and of
\cite[\S 2]{MR3092262}). Parker has treated the case of $p$-adic group rings
in his PhD thesis \cite{parker_thesis} under the 
(for applications sometimes too restrictive)
hypothesis that one can choose $a = b$ in 
\eqref{eqn:finite-presentation-comm}.

Now let $\mathfrak o$ be an integrally closed commutative noetherian
complete local domain with field of quotients $F$.
Let $\Lambda$ be an $\mathfrak o$-order in a finite dimensional separable
algebra $A$ over $F$. We will call such an order a Fitting order
over $\mathfrak o$. A standard example is that of $p$-adic group rings
$\Z_p[G]$ where $p$ is a prime and $G$ is a finite group.
The Iwasawa algebra $\Z_p \llbracket G \rrbracket$ of a one-dimensional
$p$-adic Lie group $G$ is a second example.

Let $\Lambda$ be an arbitrary Fitting order and let $M$ be a finitely
presented $\Lambda$-module. In \cite{MR2609173} the present author
defined the `maximal Fitting invariant' 
$\Fitt_{\Lambda}^{\max}(M)$ of $M$ over $\Lambda$ as an 
equivalence class of certain modules over the centre of $\Lambda$
using reduced norms. If $\Lambda$ is commutative then the reduced norm
coincides with the usual determinant and thus this notion is compatible
with that of the classical Fitting ideal.
This approach has been studied further by Johnston and the present author
in \cite{MR3092262}. It has also been applied in number theory
to study the class group and other Galois modules
in extensions with arbitrary Galois groups
\cite{MR2976321, MR3072281, MR2801311, non-abelian-zeta,
	MR3739317, non-abelian-Brumer-Stark}
(see also \cite{Brumer-Gross-Stark} for a survey).

In this article we do not use the notion of `$\Nrd(\Lambda)$-equivalence'
as in \cite{MR2609173}. We essentially follow the alternative approach in
\cite[\S 3.5]{MR3092262} and define Fitting invariants as a genuine ideal
of a certain commutative ring $\mathcal{I}(\Lambda)$ that contains the 
centre of $\Lambda$ and the reduced norms of every matrix with entries
in $\Lambda$. As long as we are only interested in annihilation results,
this approach has no disadvantage over the more complicated notion of
$\Nrd(\Lambda)$-equivalence. The latter is only necessary when one
wishes to relate Fitting invariants to (relative) $K$-theory.

In order to obtain annihilators from $\Fitt_{\Lambda}^{\max}(M)$
one has to multiply by a certain ideal in
$\mathcal{I}(\Lambda)$ which we call the denominator ideal.
In this article we report on basic properties of noncommutative
Fitting invariants and on lower bounds for the denominator ideal.
In particular, we consider the case where $\Lambda$ is a $p$-adic 
group ring. Inspired by \cite[Lemma 6]{MR2046598} and
recent work of Kataoka \cite{Kataoka},
we give a new shorter and simpler proof of a proposition in
\cite{MR2609173} on the behaviour of Fitting invariants 
under Pontryagin duality.
Finally, in section \S \ref{sec:additivity} we investigate whether 
Fitting invariants are additive in the sense of \eqref{eqn:additivity-comm}
when working over noncommutative rings. 
We show that this property does hold for certain classes of 
Fitting orders but, perhaps surprisingly, that it does not hold for
non-maximal hereditary Fitting orders over complete discrete valuation rings.

In the appendix, we present a new approach to Fitting invariants
for unitary rings $\Lambda$ which are Morita equivalent to
a commutative ring. This generalises \cite[\S 2]{MR3092262},
where the case of matrix rings over commutative rings is considered.

In many cases we will not provide rigorous proofs. Instead we
try to motivate the results and give many examples.

\subsection*{Acknowledgements}
The author acknowledges financial support provided by the 
Deutsche Forschungsgemeinschaft (DFG) 
within the Heisenberg programme (No.\, NI 1230/3-1).
I am indebted to Masato Kurihara, Kenichi Bannai
and Takeshi Tsuji for the excellent organisation of the Iwasawa 2017
conference, where I had the opportunity to give a Preparatory Lecture
Series on `Non-abelian Stark-type conjectures and noncommutative Iwasawa
theory' including most of the material presented in this article.
I also thank Henri Johnston, Takenori Kataoka and David Watson
for fruitful discussions
and help concerning various aspects of Fitting invariants.

\subsection*{Notation and conventions}
All rings are assumed to have an identity element and all modules are assumed
to be left modules unless otherwise stated.
If $\Lambda$ is a ring, we write $M_{m \times n}(\Lambda)$ for the set of all 
$m \times n$ matrices with entries in $\Lambda$. 
We denote the group of invertible
matrices in $M_{n \times n}(\Lambda)$ by $\GL_n(\Lambda)$
and let $\boldsymbol{1}_n \in \GL_n(\Lambda)$ be the $n \times n$
identity matrix.
Moreover, we let $\zeta(\Lambda)$ denote the centre of the ring $\Lambda$.
We shall sometimes abuse notation by using the symbol $\oplus$ to denote
the direct product of rings or orders.

\section{The commutative case}
The material presented in this section originates with Fitting
\cite{Fitting-DMV}. We refer the reader to \cite{MR0460383}
and \cite[\S 20]{MR1322960} for further details.

Let $R$ be a commutative ring and let $M$ be a finitely presented
$R$-module. Choose a finite presentation $h$ as in 
\eqref{eqn:finite-presentation-comm} and define
\[
\Fitt_{R}(M) := \left\{\begin{array}{lll}
0 & \mathrm{ if } & a < b\\
\langle \det(H) \mid H \in S_b(h) \rangle_{R}
& \mathrm{ if } & a \geq b,
\end{array}\right.
\]
where we recall that $S_b(h)$ denotes the set of all $b \times b$
submatrices of $h$.
We call $\Fitt_R(M)$ the \emph{Fitting ideal} of $M$ over $R$.

\begin{theorem} \label{thm:Fitt-well-comm}
	Let $R$ be a commutative ring and let $M$ be a finitely presented 
	$R$-module. Then $\Fitt_R(M)$ is independent of the choice of $h$
	and thus is well-defined.
\end{theorem}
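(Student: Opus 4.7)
The plan is to show that $\Fitt_R(M)$ is invariant under a short list of elementary moves on the presentation matrix, and then that any two finite presentations of $M$ can be transformed into one another by a sequence of such moves.

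First, I would verify that the ideal $I_b(h) := \langle \det(H) \mid H \in S_b(h)\rangle_R$ is unchanged under
\begin{enumerate}
\item left- and right-multiplication of $h$ by $P \in \GL_a(R)$ and $Q \in \GL_b(R)$;
\item adjoining a row of zeros to $h$, that is, a trivial syzygy;
\item the stabilisation $h \mapsto \begin{pmatrix} h & 0 \\ 0 & 1 \end{pmatrix}$, which adjoins a new free generator mapping to $0 \in M$ together with the syzygy annihilating it.
\end{enumerate}
Invariance under (i) follows from the Cauchy--Binet formula, which writes each $b \times b$ minor of $PhQ$ as an $R$-linear combination of $b \times b$ minors of $h$ (and vice versa via $P^{-1}$ and $Q^{-1}$). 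Invariance under (ii) is immediate: every $b \times b$ minor using the new row vanishes. Invariance under (iii) follows by cofactor expansion along the new final column, yielding an equality of $(b+1) \times (b+1)$ minors of the enlarged matrix with $b \times b$ minors of $h$.

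The main step, and the principal obstacle, is to show that any two finite presentations of $M$ are related by a sequence of these moves. Given presentations $R^a \to R^b \xrightarrow{\pi} M \to 0$ and $R^{a'} \to R^{b'} \xrightarrow{\pi'} M \to 0$, I would first apply (iii) repeatedly so that the two middle terms become a common free module $R^n$, and then apply (iii) once more on each side to enlarge to $R^{2n}$ with surjections $\Pi(v, w) = \pi(v)$ and $\Pi'(v, w) = \pi'(v)$. Using the projectivity of $R^n$, I pick lifts $\alpha, \beta \in \End_R(R^n)$ with $\pi\alpha = \pi'$ and $\pi'\beta = \pi$. A direct check shows that
\[
\Phi := \begin{pmatrix} \beta & 1 - \beta\alpha \\ -1 & \alpha \end{pmatrix}
\]
is an automorphism of $R^{2n}$ (with inverse $\begin{pmatrix} \alpha & \alpha\beta - 1 \\ 1 & \beta \end{pmatrix}$) satisfying $\Pi' \circ \Phi = \Pi$. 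Changing basis of $R^{2n}$ via $\Phi$ is a move of type (i); it transports the stabilised matrix presenting $\ker \Pi$ into one whose rows span $\ker \Pi'$. Since the stabilised matrix built from $h'$ also has rows spanning $\ker \Pi'$, a final application of Cauchy--Binet (writing one set of generators of $\ker \Pi'$ as $R$-linear combinations of the other, and conversely) shows that the two $2n \times 2n$ minor ideals coincide; combined with the invariances (i)--(iii), this yields that $\Fitt_R(M)$ is independent of the chosen presentation.
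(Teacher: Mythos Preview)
Your argument is correct and takes a genuinely different route from the paper's sketch. The paper first localises so that $R$ is local, then invokes the theory of minimal free resolutions: any free resolution of $M$ splits as a minimal one direct sum a trivial complex, which yields invertible matrices $X,Y$ with $hX = Y\left(\begin{smallmatrix} h' & 0 & 0 \\ 0 & \boldsymbol{1} & 0\end{smallmatrix}\right)$, after which multilinearity of the determinant finishes the job. Your approach avoids both localisation and the structure theory over local rings: you work globally, using Cauchy--Binet for invariance under $\GL$-moves and the explicit Schanuel-type automorphism $\Phi$ to align any two surjections $R^{2n}\twoheadrightarrow M$. The trade-off is that the paper's approach foreshadows the noncommutative treatment later on (where minimal projective resolutions over semiperfect rings play the analogous role), while yours is more elementary and self-contained, needing nothing beyond Cauchy--Binet and a clever $2\times 2$ block matrix. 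One small imprecision: ``apply (iii) once more on each side to enlarge to $R^{2n}$'' should read ``$n$ more times''; otherwise the proposal is sound.
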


The idea of the proof is as follows
(for a full proof see \cite[\S 3.1]{MR0460383} or \cite[\S 20.2]{MR1322960}). 
Since two ideals are equal
if and only if they become equal in every localisation of $R$,
we can and do assume that $R$ is local. We choose $b' \in \N$ minimal
such that there is a surjection $R^{b'} \twoheadrightarrow M$.
Similarly, we choose $a' \in \N$ minimal such that there is a
finite presentation
\[
R^{a'} \stackrel{h'}{\longrightarrow} R^{b'} \longrightarrow
M \longrightarrow 0.
\]
It suffices to show that the Fitting ideals coming from $h$ 
and $h'$ coincide.
We may view \eqref{eqn:finite-presentation-comm} as the truncation of
a free resolution $\mathcal{F}$ of $M$. Similarly,
we may view $h'$ as a truncation of a \emph{minimal} free resolution 
$\mathcal{F}'$ of $M$.  However, as $R$ is local,
every free resolution is isomorphic to the direct sum of
$\mathcal F'$ and a trivial complex (see \cite[Theorem 20.2]{MR1322960}).
In particular, there are invertible matrices $X \in \GL_a(R)$ and
$Y \in \GL_b(R)$ such that
\[
h \circ X = Y \circ \left( \begin{array}{ccc}
h' & 0 & 0 \\ 0 & \boldsymbol{1} & 0
\end{array}\right),
\]
where $\boldsymbol{1} = \boldsymbol{1}_{b-b'}$ 
is a $(b-b') \times (b-b')$ identity matrix.
As $\det(Y)$ belongs to $R^{\times}$, we may assume that $Y=1$.
By an exercise in linear algebra that uses the multilinearity of
the determinant we may likewise assume that $X=1$.
The result now follows easily.

\begin{example} \label{ex:R/I}
	Let $I = \langle r_1, \dots, r_a \rangle_R$ 
	be a finitely generated ideal of $R$ and
	take $M = R/I$. Then we have a finite presentation
	\[
	R^a \longrightarrow R \longrightarrow R/I \longrightarrow 0,
	\]
	where the first arrow maps the $i$-th standard basis vector of $R^a$ to
	$r_i$, $1 \leq i \leq a$. Thus we have that
	\[
	\Fitt_R(R/I) = I.
	\]
\end{example}

\begin{remark} \label{rem:base-change}
	Let $R \rightarrow S$ be a homomorphism of commutative rings.
	As taking tensor products is a right exact functor, a finite
	presentation \eqref{eqn:finite-presentation-comm} of the
	$R$-module $M$ yields a finite presentation
	\[
		S^a \longrightarrow S^b \longrightarrow S \otimes_R M 
		\longrightarrow 0
	\]
	of $S \otimes_R M$.
	Fitting ideals therefore commute with base change:
	\[
		\Fitt_S (S \otimes_R M) = S \otimes_R \Fitt_R(M).
	\]
\end{remark}

\begin{remark}
	Let $M$ be a finitely presented $R$-module with a finite presentation
	$h$ as in \eqref{eqn:finite-presentation-comm}.
	For each integer $i \geq 0$ one can define `higher Fitting ideals'
	$\Fitt_R^i(M)$ of $M$ that are generated by the minors of $h$
	of size $b-i$. These invariants form an increasing sequence
	\[
		\Fitt_R(M) = \Fitt_R^0(M) \subseteq \Fitt_R^1(M)
		\subseteq \Fitt_R^2(M) \subseteq \dots
	\]
	of $R$-ideals. Moreover, if $M$ can be generated by $q$ elements,
	then $\Fitt_R^q(M) = R$.  See \cite[\S 3, Theorem 2]{MR0460383} 
	for a proof. If $R$ is a principal ideal domain, then the higher
	Fitting ideals $\Fitt^i_R(M)$ for all $i \geq 0$ determine 
	the $R$-module $M$ up to isomorphism. This can be deduced from the
	structure theorem of finitely generated modules over principal ideal
	domains (see \cite[\S 1.1]{MR1998607}, for instance). 
\end{remark}

Let us denote the $R$-annihilator ideal of $M$ by $\Ann_R(M)$.
The main interest in Fitting ideals comes from the following fact.

\begin{theorem} \label{thm:fitt-ann-comm}
	Let $R$ be a commutative ring and let $M$ be a finitely presented 
	$R$-module. Then one has an inclusion
	\[
		\Fitt_R(M) \subseteq \Ann_R(M).
	\]
\end{theorem}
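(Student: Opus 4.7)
The plan is to verify the inclusion generator by generator. If $a < b$ then $\Fitt_R(M) = 0 \subseteq \Ann_R(M)$ and there is nothing to show, so I assume $a \geq b$ and fix an arbitrary $H \in S_b(h)$. It then suffices to prove $\det(H) \cdot M = 0$. Writing $\bar{e}_1,\dots,\bar{e}_b$ for the images in $M$ of the standard basis vectors of $R^b$ (a generating set of $M$), the task reduces further to showing $\det(H)\bar{e}_k = 0$ for all $k=1,\dots,b$.

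The next step is to extract relations from the presentation. Exactness of \eqref{eqn:finite-presentation-comm} says precisely that every row of the $a \times b$ matrix $h$, regarded as an element of $R^b$, lies in the kernel of the surjection $R^b \twoheadrightarrow M$; equivalently, each row of $h$ gives a relation $\sum_{j=1}^b h_{ij}\bar{e}_j = 0$ in $M$. Selecting the $b$ rows that make up the submatrix $H$, I obtain $b$ simultaneous relations
\[
\sum_{j=1}^b H_{ij}\bar{e}_j = 0 \qquad (i=1,\dots,b).
\]

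Finally I invoke Cramer's rule in its adjugate form, $\mathrm{adj}(H) \cdot H = \det(H)\cdot\boldsymbol{1}_b$, which is a polynomial identity in the matrix entries with integer coefficients and therefore holds over any commutative ring. Multiplying the column of relations above on the left by $\mathrm{adj}(H)$ converts the equation $H \cdot (\bar{e}_j)_j = 0$ into $\det(H) \cdot (\bar{e}_j)_j = 0$, giving $\det(H)\bar{e}_k = 0$ for each $k$, as required.

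There is no real obstacle here; the only thing to be careful about is the convention (the paper writes $h$ as an $a \times b$ matrix, so its rows rather than its columns represent the relations), and the observation that the adjugate identity is universal and hence usable over an arbitrary commutative ring without any local, noetherian, or flatness hypothesis. One could alternatively reduce to the local case using Remark \ref{rem:base-change} and the fact that both $\Fitt_R(M)$ and $\Ann_R(M)$ can be tested locally, but the adjugate argument is already sufficiently direct to avoid any such reduction.
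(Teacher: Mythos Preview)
Your proof is correct and is essentially the same as the paper's: both arguments reduce to a single $b \times b$ submatrix $H$ and then use the adjugate identity $H^{\ast} H = \det(H)\,\boldsymbol{1}_b$ to conclude that $\det(H)$ kills every generator of $M$. The paper packages this as a commutative diagram (observing that $M$ is a quotient of $\mathrm{coker}(H)$ and that $\det(H)$ on $R^b$ factors through the image of $H$), while you unwind the same computation explicitly on the relations, but the content is identical.
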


\begin{proof}
	Choose a finite presentation \eqref{eqn:finite-presentation-comm}
	of $M$.	Let $H \in S_b(h)$ be a submatrix. As $M$ is a
	homomorphic image of the cokernel of $H$, we may assume that
	$h = H$ and thus $a=b$. Let $H^{\ast} \in M_{b \times b}(R)$
	be the adjoint matrix of $H$. Then 
	$H^{\ast} H = H H^{\ast} = \det(H) \boldsymbol{1}_{b}$
	and so the result follows from the
	commutative diagram
	\[
	\xymatrix{
		R^{b} \ar@{>}[rr]^{H}  & & 
		R^{b} \ar@{>}[d]^{\det(H)} \ar@{>}[lld]_{H^{\ast}}  \ar@{>>}[rr] & & M \ar@{>}[d]^{\det(H)} \\
		R^{b} \ar@{>}[rr]^{H} & & R^{b}  \ar@{>>}[rr] & & M
	}
	\]
	once one notes that the right vertical arrow is zero.
\end{proof}

\begin{example} \label{ex:Fitting-Z}
	Let $R = \Z$ and let $M$ be an arbitrary finitely generated $\Z$-module.
	By the fundamental theorem of finitely generated abelian groups
	there are unique integers $f,n \geq 0$ and positive integers
	$1 < d_1 \mid d_2 \mid \dotsm\mid d_n$ such that
	\[
		M \simeq \Z^f \oplus \bigoplus_{i=1}^n \Z/ d_i \Z.
	\]
	If $f > 0$ then clearly $\Fitt_{\Z}(M) = 0$. If $f=0$
	we can take for $h$ the diagonal matrix with entries
	$d_1, \dots, d_n$. It follows that
	\[
		\Fitt_{\Z}(M) = \left\{ \begin{array}{lll}
		0 & \mathrm{ if } & f>0\\
		(\prod_{i=1}^{n} d_i) \Z & \mathrm{ if } & f=0.
		\end{array}\right.
	\]
	Moreover, we clearly have
	\[
	\Ann_{\Z}(M) = \left\{ \begin{array}{lll}
	0 & \mathrm{ if } & f>0\\
	d_n \Z & \mathrm{ if } & f=0.
	\end{array}\right.
	\]
	In particular, the inclusion $\Fitt_{\Z}(M) \subseteq \Ann_{\Z}(M)$
	is proper if and only if $f=0$ and $n>1$. Of course, similar observations
	hold for every principal ideal domain $R$.
\end{example}

\begin{example} \label{ex:augmentation-ideal}
	Let $G$ be a finite abelian group and let $\Delta G$ be the kernel of
	the natural augmentation map $\Z[G] \rightarrow \Z$
	that sends each $g \in G$ to $1$. It is straightforward to show that
	\[
		\Ann_{\Z[G]}(\Delta G) = N_G  \Z,
	\]
	where $N_G := \sum_{g \in G} g$ (see \cite[Satz 1.3]{MR3014997}). 
	By Theorem \ref{thm:fitt-ann-comm}
	we must have
	\[
	\Fitt_{\Z[G]}(\Delta G) = m N_G \Z
	\]
	for some integer $m$. We now apply Remark \ref{rem:base-change}
	with $R = \Z[G]$ and $S = \Z$ so that
	\[
		\Fitt_{\Z}(\Z \otimes_{\Z[G]} \Delta G) = m |G| \Z.
	\]
	Moreover, we have isomorphisms of abelian groups
	$\Z \otimes_{\Z[G]} \Delta G
	\simeq \Delta G / (\Delta G)^2 \simeq G$ so that
	\[
	\Fitt_{\Z}(\Z \otimes_{\Z[G]} \Delta G) = |G| \Z
	\]
	by Example \ref{ex:Fitting-Z}. It follows that $m \in \Z^{\times}$
	and thus
	\[
	\Fitt_{\Z[G]}(\Delta G) = N_G \Z = \Ann_{\Z[G]}(\Delta G).
	\]
\end{example}

We now record some basic facts about Fitting ideals.

\begin{lemma} \label{lem:basic-props-comm}
	Let $R$ be a commutative ring and let $M_1$, $M_2$, $M_3$
	be finitely presented $R$-modules. 
	\begin{enumerate}
		\item 
		If $\pi: M_1 \twoheadrightarrow M_2$ is an epimorphism, then
		$\Fitt_R(M_1) \subseteq \Fitt_R(M_2)$.
		\item
		Fitting ideals behave well under direct sums:
		\[
			\Fitt_R(M_1 \oplus M_3) = \Fitt_R(M_1) \cdot \Fitt_R(M_3).
		\]
		\item
		If $M_1 \stackrel{\iota}{\rightarrow} M_2 \rightarrow M_3 \rightarrow 0$
		is an exact sequence, then
		\[
			\Fitt_R(M_1) \cdot \Fitt_R(M_3) \subseteq \Fitt_R(M_2).
		\]
	\end{enumerate}
\end{lemma}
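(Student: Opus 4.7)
The plan is to prove each part by constructing an explicit finite presentation of the target module out of those of the given modules and then inspecting the resulting $b \times b$ minors.

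For (i), I would fix a finite presentation $R^a \xrightarrow{h} R^b \twoheadrightarrow M_1$ and compose with $\pi$ to obtain a surjection $R^b \twoheadrightarrow M_2$. Since $M_2$ is finitely presented, its kernel under this surjection is finitely generated, so adjoining finitely many extra rows to $h$ produces a finite presentation matrix $h'$ of $M_2$. Every $b \times b$ submatrix of $h$ then remains a $b \times b$ submatrix of $h'$, which gives $\Fitt_R(M_1) \subseteq \Fitt_R(M_2)$. The case $a < b$ is automatic.

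For (ii), given finite presentations $R^{a_i} \xrightarrow{h_i} R^{b_i} \twoheadrightarrow M_i$ for $i \in \{1,3\}$, I would use the block diagonal matrix $h = \begin{pmatrix} h_1 & 0 \\ 0 & h_3 \end{pmatrix}$ as a presentation of $M_1 \oplus M_3$. An arbitrary $(b_1+b_3) \times (b_1+b_3)$ submatrix is formed by choosing $r_1$ rows from the top block and $r_3 = b_1+b_3-r_1$ from the bottom; in the Leibniz expansion of its determinant, any contributing permutation $\sigma$ must send the top $r_1$ row-indices into the first $b_1$ column-indices, since all other entries vanish. This forces $r_1 = b_1$, and then the submatrix is itself block diagonal with determinant $\det(H_1)\det(H_3)$ for $H_1 \in S_{b_1}(h_1)$ and $H_3 \in S_{b_3}(h_3)$. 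Conversely, every such product is attained, yielding the desired equality.

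For (iii), I would first replace $M_1$ by its image $K := \iota(M_1) \hookrightarrow M_2$; by (i) applied to the surjection $M_1 \twoheadrightarrow K$, it suffices to prove $\Fitt_R(K) \cdot \Fitt_R(M_3) \subseteq \Fitt_R(M_2)$. Fixing presentations $R^{a_1} \xrightarrow{h_1} R^{b_1} \xrightarrow{\varphi_1} K$ and $R^{a_3} \xrightarrow{h_3} R^{b_3} \xrightarrow{\varphi_3} M_3$, I would lift $\varphi_3$ through $M_2 \twoheadrightarrow M_3$ to $\tilde\varphi_3 : R^{b_3} \to M_2$, producing a surjection $\varphi_2 : R^{b_1+b_3} \twoheadrightarrow M_2$ given by $(x,y) \mapsto \varphi_1(x) + \tilde\varphi_3(y)$. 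Since $\tilde\varphi_3 \circ h_3$ lands in $K = \mathrm{im}(\varphi_1)$, it admits a lift $\tilde h_3 : R^{a_3} \to R^{b_1}$, giving rise to kernel elements of the form $(-\tilde h_3 z_3, h_3 z_3)$; together with the rows of $h_1$ in the top block, these produce, after possibly appending further rows to complete the presentation, a relation matrix for $M_2$ containing the block lower triangular piece $\begin{pmatrix} h_1 & 0 \\ -\tilde h_3 & h_3 \end{pmatrix}$. Any $b_1$-minor of $h_1$ paired with any $b_3$-minor of $h_3$ then assembles into a block lower triangular $(b_1+b_3) \times (b_1+b_3)$ submatrix whose determinant is the product of the two minors, which delivers the required inclusion.

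The main obstacle will be the horseshoe construction in (iii): one must verify that $\tilde h_3$ is well-defined, that $\varphi_2$ is genuinely surjective, and, most importantly, that the additional rows possibly required to complete the presentation of $M_2$ do not obstruct the extraction of the block lower triangular minors that realise the desired products. The minor analysis in (ii), by contrast, is purely combinatorial once phrased via the Leibniz formula, which is valid over any commutative ring.
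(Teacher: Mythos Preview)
Your approach is correct and matches the paper's own (sketched) argument: enlarge the relation matrix for (i), and use the horseshoe construction to obtain a block-triangular presentation for (iii), specialising to block-diagonal for (ii); your Leibniz analysis in (ii) is exactly the linear algebra the paper leaves implicit. The only quibble is that your reduction in (iii) to injective $\iota$ invokes (i) for $K=\iota(M_1)$, which need not be finitely presented over a non-noetherian $R$; this is harmless, since your horseshoe construction goes through verbatim if you feed it the given presentation $R^{a_1}\xrightarrow{h_1}R^{b_1}\xrightarrow{\varphi_1}M_1$ composed with $\iota$ in place of a presentation of $K$ (the rows of your block matrix are then still genuine relations for $M_2$, and $M_2$ being finitely presented guarantees they sit inside some finite presentation), and the paper makes the same reduction without comment.
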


\begin{proof}
	 We only sketch the proof. Let $R^{a_1} \stackrel{h_1}{\longrightarrow}
	 R^{b_1} \stackrel{\pi_1}{\longrightarrow} M_1 \rightarrow 0$ be a 
	 finite presentation of $M_1$. 
	 Put $\pi_2 := \pi \circ \pi_1$.
	 Then one may construct a finite presentation 
	 \[
		 R^{a_2} \xrightarrow{(h_1 \mid \ast)}
		 R^{b_1} \stackrel{\pi_2}{\longrightarrow} M_2 \longrightarrow 0
	 \]
	 of $M_2$ by adding more relations if necessary. This shows (i).
	 For (iii) we may therefore assume that $\iota$ is injective.
	 Let $h_1$ and $h_3$ be finite presentations of $M_1$ and $M_3$,
	 respectively. As in the proof of the horseshoe lemma (see
	 \cite[Lemma 2.2.8]{MR1269324}, for instance) one can construct
	 a finite presentation $h_2$ of $M_2$ of shape
	 \[
		 \left( \begin{array}{cc}
		 h_1 & g \\ 0 & h_3
		 \end{array}
		 \right).
	 \]
	 If $M_2 = M_1 \oplus M_3$ one may additionally assume that $g=0$.
	 From this one can deduce (ii) and (iii).
\end{proof}

\begin{example}
	Let $I_1, \dots, I_n$ be finitely generated ideals of $R$.
	Then it follows from Example \ref{ex:R/I} and Lemma
	\ref{lem:basic-props-comm}(ii) that
	\[
		\Fitt_R\left(\bigoplus_{j=1}^n R/I_j\right) = \prod_{j=1}^{n} I_j.
	\]
\end{example}

\begin{example}
	Let $p$ be a prime and let $R = \Z_p\llbracket T \rrbracket$ be the
	power series ring in one variable over $\Z_p$. 
	Let $M$ be a finitely generated
	torsion $R$-module. Then by the structure theorem for Iwasawa modules
	\cite[Theorem 5.3.8]{MR2392026} there is a pseudo-isomorphism
	\[
		\alpha: M \longrightarrow \bigoplus_{i = 1}^s R / p^{m_i} \oplus
		\bigoplus_{j = 1}^t R / F_j^{n_j},
	\]
	where $s,t \geq 0$, $m_i, n_j \geq 1$ are integers and the $F_j$
	are distinguished irreducible polynomials. 
	This means in particular that $\alpha$ becomes an isomorphism in every localisation
	of $R$ at a prime ideal of height $1$.
	The characteristic ideal
	$\Char_R(M)$ of $M$ is defined to be the $R$-ideal generated by
	$\prod_{i=1}^s p^{m_i} \cdot \prod_{j=1}^t F_j^{n_j}$.
	
	Now assume that $M$ contains no finite non-trivial submodule
	(that is $\alpha$ is injective). Then the projective dimension
	of $M$ is at most $1$ by 
	\cite[Proposition 5.3.19(i)]{MR2392026}. 
	Since $R$ is a local ring, every projective
	$R$-module is free and so there is a short exact sequence
	\[
		0 \longrightarrow R^a \longrightarrow R^a \longrightarrow
		M \longrightarrow 0.
	\]
	It follows that $\Fitt_R(M)$ is a principal ideal. Since two
	principal ideals over $R$ are equal if and only if they become
	equal in every localisation of $R$ at a height $1$ prime ideal,
	we have that
	\[
		\Fitt_R(M) = \Char_R(M)
	\]
	in this case (see \cite[Lemma 9.1]{MR2908781}, for instance).
\end{example}

\section{Noncommutative Fitting invariants: Basic properties}

\subsection{Fitting domains and Fitting orders}
We now introduce the class of rings for which we intend to define
Fitting invariants. We recall that an $F$-algebra $A$ over a field $F$
is called \emph{separable} if
$E \otimes_F A$ is a semisimple $E$-algebra for every field extension
$E$ of $F$. If $F$ is a perfect field, then every finite dimensional
semisimple $F$-algebra is indeed separable 
(as follows from \cite[Corollary 7.6]{MR632548}).

\begin{definition}
Let $\mathfrak o$ be an integrally closed commutative
noetherian complete local domain with field of quotients $F$.
Then we call $\mathfrak o$ a \emph{Fitting domain}.
Let $A$ be a finite dimensional separable $F$-algebra and let $\Lambda$
be an $\mathfrak o$-order in $A$. Then $\Lambda$ is called a
\emph{Fitting order} over $\mathfrak o$.
\end{definition}

\begin{remark}
	Let $\Lambda$ be a Fitting order over the Fitting domain $\mathfrak o$. 
	Then $\Lambda$ is noetherian and so every 
	finitely generated $\Lambda$-module 
	is in fact finitely presented.
\end{remark}

\begin{example}
	Any complete discrete valuation ring $\mathfrak o$ is a Fitting domain.
	Conversely, any Fitting domain of Krull dimension $1$ is a complete
	discrete valuation ring.
\end{example}

\begin{example}
	For any Fitting domain $\mathfrak o$ and any positive integer $n$,
	the ring $M_{n \times n}(\mathfrak o)$ is a Fitting order over
	$\mathfrak o$.
\end{example}

\begin{example}
	Let $p$ be a prime and let $G$ be a finite group.
	Then the ring of $p$-adic integers $\Z_p$ is a Fitting domain
	and the group ring $\Z_p[G]$ is a Fitting order over $\Z_p$.
\end{example}

\begin{example}
	More generally, let $\mathfrak o$ be an arbitrary Fitting domain
	with field of quotients $F$ and let $G$ be a finite group. Then
	an $\mathfrak o$-order $\Lambda$ in $A := F[G]$ is a Fitting order
	over $\mathfrak o$ if and only if $|G|$ is invertible in $F$.
\end{example}

\begin{example} \label{ex:Iwasawa-algebra}
	Let $G$ be a profinite group containing a finite normal subgroup $H$
	such that $G/H \simeq \Gamma$, where $\Gamma$ is a pro-$p$ group
	isomorphic to $\Z_p$. 
	Note that $G$ can be written as a semi-direct product $H \rtimes \Gamma$
	and is a one-dimensional $p$-adic Lie group.
	The Iwasawa algebra of $G$ over $\Z_p$
	is defined to be
	\[
		\Z_{p}\llbracket G\rrbracket = \varprojlim \Z_{p}[G/N],
	\]
	where the inverse limit is taken over all open normal subgroups 
	$N$ of $G$. Since any homomorphism $\Gamma \rightarrow \Aut(H)$
	must have open kernel, we may choose
	a natural number $n$ such that $\Gamma^{p^n}$ is central
	in $G$. We put 
	$\mathfrak o := \Z_p \llbracket \Gamma^{p^n} \rrbracket$
	and $F := \Quot(\mathfrak o)$.
	Then $\mathfrak o$
	is non-canonically isomorphic to the power series ring
	$\Z_p \llbracket T \rrbracket$ in one variable over $\Z_p$
	and is thus a Fitting domain. If we view $\Z_{p}\llbracket G\rrbracket$
	as an $\mathfrak o$-module (or indeed as a left $\mathfrak o[H]$-module),
	there is a decomposition
	\[
		\Z_p \llbracket G \rrbracket =
		\bigoplus_{i=0}^{p^n-1} \mathfrak o[H] \gamma^i,
	\]
	where $\gamma$ is a topological generator of $\Gamma$.
	This shows that $\Z_p \llbracket G \rrbracket$ is a Fitting
	order over $\mathfrak o$ in the separable 
	$F$-algebra 
	$A = \mathcal{Q}(G) := \oplus_i F[H]\gamma^i$.
\end{example}

\subsection{Reduced norms and the integrality ring} \label{subsec:reduced-norms}

Let $\mathfrak o$ be a Fitting domain with field of quotients $F$
and let $A$ be a finite dimensional separable $F$-algebra.
By Wedderburn's theorem $A$ decomposes into
\[
	 A = A_1 \oplus \dots \oplus A_t,
\]
where each $A_i$ is isomorphic to an algebra of $n_i \times n_i$
matrices over a skewfield $D_i$.
Then $F_i := \zeta(A_i) = \zeta(D_i)$ is a finite field extension
of $F$ and $A_i$ is a central simple $F_i$-algebra.
The reduced norm map
\[
	\Nrd = \Nrd_A: A \longrightarrow \zeta(A) = F_1 \oplus \dots \oplus F_t
\] 
is defined componentwise and extends to matrix rings
over $A$ in the obvious way (see \cite[\S 7D]{MR632548}).
If every $D_i$ is in fact a field,
then the reduced norm of $x =(x_i)_i \in A$ is indeed given by
$\Nrd(x) = (\det(x_i))_i$. In general, one can always choose
a (finite) field extension $E$ of $F$ such that $A_E := E \otimes_F A$
is of this form. Then one puts $\Nrd_A(x) := \Nrd_{A_E}(1 \otimes x)$
which actually belongs to $\zeta(A)$ and is independent of the choice of $E$.

Now let $\Lambda$ be a Fitting order in $A$ over $\mathfrak o$.
By \cite[Corollary 10.4]{MR1972204} we may choose a maximal 
$\mathfrak o$-order $\Lambda'$ in $A$ containing $\Lambda$.
Then $\Lambda'$ is also a Fitting order over $\mathfrak o$ 
and likewise decomposes into $\Lambda' = \Lambda_1' \oplus \dots
\oplus \Lambda_t'$, where $\Lambda_i'$ is a maximal $\mathfrak o$-order
in $A_i$ for each $i$.
The reduced norm restricts to a map
\[
	\Nrd: \Lambda' \longrightarrow \zeta(\Lambda') = 
	\mathfrak o_1 \oplus \dots \oplus \mathfrak o_t,
\]
where $\mathfrak o_i = \zeta(\Lambda_i')$ 
denotes the integral closure of $\mathfrak o$
in $F_i$. Unfortunately, it is in general not true that the reduced norm
maps $\Lambda$ into its centre.

\begin{example} \label{ex:dihedral-denominators}
	Let $p$ be an odd prime and let $D_{2p}$ be
	the dihedral group of order $2p$.
	We may write
	\[
		D_{2p} = \langle \sigma, \tau \mid \sigma^p = \tau^2 = 1, \tau \sigma
		= \sigma^{-1} \tau \rangle.
	\]
	Then $\Lambda := \Z_p[D_{2p}]$ is a Fitting order
	in $A := \Q_p[D_{2p}]$ over $\Z_p$
	and we wish to compute $\Nrd(\sigma + \tau)$.
	We put $E := \Q_p(\zeta_p)$, where $\zeta_p$ denotes a primitive
	$p$-th root of unity, and let $j \in \mathrm{Gal}(E/\Q_p)$
	be the unique automorphism of order $2$.
	By \cite[Example 7.39]{MR632548} we have the
	Wedderburn decomposition
	\begin{equation} \label{eqn:D2p-Wedderburn}
		A \simeq A_1 \oplus A_2 \oplus A_3,
	\end{equation}
	where $A_1 = A_2 = \Q_p$ and $A_3$ is the twisted group algebra
	$E \oplus E y$ with relations $y^2 = 1$ and 
	$y \alpha = j(\alpha) y$, $\alpha \in E$. 
	Moreover, for $\alpha + \beta y \in A_3$ one has
	\[
		\Nrd(\alpha + \beta y) = 
		N_{E^+/\Q_p}(\alpha j(\alpha) - \beta j(\beta)),
	\]
	where $E^+$ denotes the fixed field of $E$ 
	under the action of $j$ and $N_{E^+/\Q_p}: E^+
	\rightarrow \Q_p$ is the field-theoretic norm map.
	The isomorphism \eqref{eqn:D2p-Wedderburn} maps $\sigma$ to the triple
	$(1,1,\zeta_p)$ and $\tau$ to the triple $(1,-1,y)$.
	As the first factor in this decomposition corresponds to the central
	idempotent $e_1 := \frac{1}{2p} \sum_{\delta \in D_{2p}} \delta$,
	we have
	\[
		\Nrd(\sigma + \tau) = 2 e_1 = 
		\frac{1}{p} \sum_{\delta \in D_{2p}} \delta
		\not\in \zeta(\Lambda).
	\]
\end{example}

To overcome this problem we define a $\zeta(\Lambda)$-submodule 
of $\zeta(A)$ by
\[
	\mathcal{I}(\Lambda) := \langle \Nrd(H) \mid H \in 
	M_{b \times b}(\Lambda), b \in \N \rangle_{\zeta(\Lambda)}.
\]
Note that this is in fact a commutative $\mathfrak o$-order in $\zeta(A)$
contained in $\zeta(\Lambda')$. We call $\mathcal{I}(\Lambda)$ the
\emph{integrality ring} of $\Lambda$.
This is the smallest ring that contains $\zeta(\Lambda)$ and the image
of the reduced norm of all matrices with entries in $\Lambda$.

\begin{example} \label{ex:dihedral-int-ring}
	Let $\ell$ and $p$ be primes with $p$ odd. Choose a maximal
	$\Z_{\ell}$-order $\mathfrak M_{\ell}(D_{2p})$ containing
	$\Z_{\ell}[D_{2p}]$. Then one has (see \cite[Example 6]{MR3092262}
	and \cite[Proposition 6.9]{MR3461042})
	\[
		\mathcal I(\Z_{\ell}[D_{2p}]) = \left\{
		\begin{array}{lll}
		\zeta(\mathfrak M_{p}(D_{2p})) & \mathrm{ if } & \ell = p\\
		\zeta(\Z_{\ell}[D_{2p}]) & \mathrm{ if } & \ell \not= p.
		\end{array}
		\right.
	\]
	Note that the case $\ell \not= p$ follows from Proposition
	\ref{prop:best-denominators} below. For the case $\ell = p$
	one has to compute the reduced norms of several
	group ring elements as in Example \ref{ex:dihedral-denominators}
	and then show that these generate $\zeta(\mathfrak M_{p}(D_{2p}))$
	as a $\Z_p$-module.
\end{example}

\begin{remark}
	The integrality ring appears in many conjectures on the integrality
	of so-called Stickelberger elements. These elements lie in the centre
	of the rational group ring and are constructed via integer values of
	Artin $L$-functions attached to the irreducible characters of $G$,
	where $G$ is the Galois group of a finite Galois extension of 
	number fields. We refer the reader to 
	\cite{Brumer-Gross-Stark} for a survey of conjectures 
	and results in this context.
\end{remark}

\subsection{Noncommutative Fitting invariants}
Let $\Lambda$ be a Fitting order over the Fitting domain $\mathfrak o$.
Let $M$ be a $\Lambda$-module with finite presentation
\[
	\Lambda^a \stackrel{h}{\longrightarrow} \Lambda^b \longrightarrow
	M \longrightarrow 0.
\]
As before we let $S_b(h)$ be the set of all $b \times b$ submatrices
of $h$. Since the reduced norm is a generalisation of
the determinant with values in $\mathcal{I}(\Lambda)$, 
it is now natural to make the following definition.
\[
	\Fitt_{\Lambda}(h) := \left\{\begin{array}{lll}
	0 & \mathrm{ if } & a < b\\
	\langle \Nrd(H) \mid H \in S_b(h) \rangle_{\mathcal{I}(\Lambda)}
	& \mathrm{ if } & a \geq b.
	\end{array}\right.
\]
Unfortunately, this definition depends on $h$.

\begin{example} \label{ex:dependence-on-h}
	Consider the Fitting order $\Lambda = M_{2 \times 2}(\Z_3)$
	over $\Z_3$ and the trivial $\Lambda$-module $M=0$.
	We have $\mathcal{I}(\Lambda) = \zeta(\Lambda) = \Z_3$.
	The identity map $\id: \Lambda \rightarrow \Lambda$
	is certainly a finite presentation of $M$ and we have
	$\Fitt_{\Lambda}(\id) = \langle \Nrd(\id) \rangle_{\Z_3} = \Z_3$.
	However, the map
	\begin{eqnarray*}
	h: \Lambda e_1 \oplus \Lambda e_2 & \longrightarrow & \Lambda \\
	e_1 & \mapsto & \left(\begin{array}{cc} 4 & 1 \\ 1 & 4
	\end{array}\right) \\
	e_2 & \mapsto & \left(\begin{array}{cc} 5 & 1 \\ 1 & 5
	\end{array}\right)
	\end{eqnarray*}
	is also a finite presentation of $M$ and we have
	$\Fitt_{\Lambda}(h) = \langle 15, 24 \rangle_{\Z_3} = 3 \Z_3$.
\end{example}

\begin{remark}
	The ring $\Lambda$ in Example \ref{ex:dependence-on-h}
	is a matrix ring over a commutative ring.
	In this case one can remedy the dependence on $h$ via 
	Morita equivalence. We will explain this approach in
	the appendix.
\end{remark}

In order to examine the dependence on $h$ we try to adapt the proof
of Theorem \ref{thm:Fitt-well-comm} in the commutative case.
We still may view a finite presentation of $M$ as a truncated free
resolution of $M$. As $\Lambda$ is a semiperfect ring
(see \cite[Example 23.3]{MR1838439}),
every finitely generated module $M$ has a projective cover
(see \cite[Theorem 6.23]{MR632548} or \cite[Theorem 24.16]{MR1838439}):
there is a finitely generated projective module $P_0$ (unique up to
isomorphism) and a surjective map $\pi: P_0 \twoheadrightarrow M$
such that no proper submodule of $P_0$ is mapped onto $M$ by $\pi$.
If every projective $\Lambda$-module is free, then $P_0 = \Lambda^{b'}$
with $b' \in \N$ minimal such that there is a surjection 
$\Lambda^{b'} \twoheadrightarrow M$. Let $P_1$ be a projective cover
of the kernel of $\pi$. We obtain an exact sequence
\[
	P_1 \longrightarrow P_0 \longrightarrow M \longrightarrow 0
\]
which we may view as the truncation of a `minimal projective resolution'
$\mathcal{P}_M$ of $M$. 
This is the correct analogue of a minimal free resolution
of a module over a commutative local ring:
Any free (in fact any projective) resolution of $M$ is isomorphic to
the direct sum of $\mathcal{P}_M$ and a trivial complex
\cite[Proposition 2.1]{MR2609173}.
Now let
\[
\Lambda^{a'} \stackrel{h'}{\longrightarrow} \Lambda^{b'} \longrightarrow
M \longrightarrow 0
\]
be a second finite presentation of $M$. By similar arguments as in
the commutative case one may assume that $a=a'$, $b=b'$ and
that there are matrices $X \in \GL_a(\Lambda)$ and $Y \in \GL_b(\Lambda)$
such that
\[
	h \circ X = Y \circ h'.
\]
As $\Nrd(Y)$ belongs to $\mathcal{I}(\Lambda)^{\times}$, we may
assume in addition that $Y=1$. In contrast to the determinant,
the reduced norm is not a multilinear map so that we cannot assume
that $X=1$. However, assuming $h \circ X = h'$ as we may, we can construct
a new finite presentation of $M$, namely
\[
\Lambda^a \oplus \Lambda^a \xrightarrow{(h \mid h')}
\Lambda^b \longrightarrow
M \longrightarrow 0.
\]
Now $\Fitt_{\Lambda}((h \mid h'))$ contains both $\Fitt_{\Lambda}(h)$
and $\Fitt_{\Lambda}(h')$. As $\mathcal{I}(\Lambda)$ is
a noetherian ring, we have shown the following
(see also \cite[Theorem 3.2 and Definition 3.3]{MR2609173} and
\cite[\S 3.5]{MR3092262}).

\begin{theorem} \label{thm:max-Fitting}
	Let $\Lambda$ be a Fitting order and let $M$ be a finitely generated
	$\Lambda$-module. Then there is a finite presentation $h$ of $M$
	such that $\Fitt_{\Lambda}(h)$ contains $\Fitt_{\Lambda}(h')$
	for every other choice $h'$ of finite presentation of $M$. 
\end{theorem}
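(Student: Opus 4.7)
The plan is to show that the family of ideals $\{\Fitt_\Lambda(h)\}_h$, indexed by all finite presentations $h$ of $M$, is upward-directed in $\mathcal{I}(\Lambda)$, and then invoke noetherianity of $\mathcal{I}(\Lambda)$ to extract a maximum.

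First, given two finite presentations $h \colon \Lambda^a \to \Lambda^b$ and $h' \colon \Lambda^{a'} \to \Lambda^{b'}$ of $M$, I would compare both with the minimal projective resolution. Because $\Lambda$ is semiperfect, $M$ admits a projective cover $\pi \colon P_0 \twoheadrightarrow M$, and iterating this construction produces a minimal projective resolution $\mathcal{P}_M$. By \cite[Proposition 2.1]{MR2609173}, any other projective resolution is isomorphic to $\mathcal{P}_M$ enlarged by a trivial (contractible) complex. Truncating this fact to the first two terms, and noting that adjoining a trivial summand of the form $\Lambda^n \xrightarrow{\boldsymbol{1}_n} \Lambda^n$ does not alter the Fitting invariant, I may assume after such adjustments that $a = a'$, $b = b'$, and that there exist $X \in \GL_a(\Lambda)$ and $Y \in \GL_b(\Lambda)$ with $h \circ X = Y \circ h'$. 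Since $\Nrd(Y) \in \mathcal{I}(\Lambda)^\times$, multiplication by $Y$ permutes the generators of $\Fitt_\Lambda(h')$ up to units of $\mathcal{I}(\Lambda)$, so I reduce to $Y = \boldsymbol{1}_b$ and thus $h \circ X = h'$.

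Next, since the reduced norm is not multilinear, there is no direct way to eliminate $X$ as in the commutative proof of Theorem \ref{thm:Fitt-well-comm}. Instead, I introduce the combined presentation
\[
\Lambda^a \oplus \Lambda^a \xrightarrow{(h \mid h')} \Lambda^b \longrightarrow M \longrightarrow 0,
\]
which is again a finite presentation of $M$ because the images of $h$ and $h'$ are both equal to the kernel of the quotient map $\Lambda^b \twoheadrightarrow M$. Every $b \times b$ submatrix of $h$ and every $b \times b$ submatrix of $h'$ appears as a $b \times b$ submatrix of $(h \mid h')$, so
\[
\Fitt_\Lambda\bigl((h \mid h')\bigr) \;\supseteq\; \Fitt_\Lambda(h) + \Fitt_\Lambda(h').
\]
This shows that the family $\mathcal{F} := \{\Fitt_\Lambda(h)\}$ is upward-directed. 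Since $\mathcal{I}(\Lambda)$ is a commutative $\mathfrak{o}$-order in the finite-dimensional $F$-algebra $\zeta(A)$, it is a noetherian ring, so $\mathcal{F}$ contains a maximal element; by directedness it is the unique maximum, which is the required presentation.

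The main obstacle is the reduction step: one must carefully justify that two arbitrary finite presentations of $M$ can be brought into a common shape using the semiperfectness of $\Lambda$ and the splitting of projective resolutions, rather than trying to imitate the commutative argument where one compares directly against a minimal presentation. The combined-presentation trick is then forced on us precisely because $\Nrd$ lacks multilinearity, and the passage from an upward-directed family to a genuine maximum relies crucially on the (not quite obvious) noetherianity of $\mathcal{I}(\Lambda)$.
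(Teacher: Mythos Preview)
Your proposal is correct and follows essentially the same approach as the paper: reduce via the minimal projective resolution (using semiperfectness of $\Lambda$) to presentations related by $h \circ X = Y \circ h'$, absorb $Y$ using $\Nrd(Y)\in\mathcal{I}(\Lambda)^\times$, then form the concatenated presentation $(h\mid h')$ to show the family of Fitting invariants is upward-directed, and conclude by noetherianity of $\mathcal{I}(\Lambda)$. The paper's argument is the same in all essentials; your explicit framing in terms of a directed family and your remark on why $\mathcal{I}(\Lambda)$ is noetherian are welcome clarifications but not a different method.
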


\begin{definition}
	Using the notation of Theorem \ref{thm:max-Fitting}, we put
	\[
	\Fitt_{\Lambda}^{\max}(M) := \Fitt_{\Lambda}(h)
	\]
	and call this the \emph{maximal Fitting invariant} of $M$ over $\Lambda$.
\end{definition}

\begin{remark}
	For an axiomatic approach to 
	noncommutative Fitting invariants we refer the reader to \cite{Kataoka}.
	A natural notion of `higher noncommutative Fitting invariants' 
	has recently been considered
	by Burns and Sano \cite{non-abelian-zeta}.
\end{remark}

\begin{remark}
	In order to define noncommutative Fitting invariants it suffices
	to assume that $\mathfrak o$ is a commutative
	noetherian complete local domain. In fact, the integral closure
	of $\mathfrak o$ in its field of quotients is finitely generated
	as an $\mathfrak o$-module by \cite[Theorem 4.3.4]{MR2266432} 
	in this case, and noncommutative Fitting invariants have been
	defined in this greater generality in \cite{MR2609173}.
\end{remark}

One can prove the analogues of Lemma \ref{lem:basic-props-comm}(i)
and (iii) without any significant changes.

\begin{lemma} \label{lem:basic-props-noncomm}
	Let $\Lambda$ be a Fitting order and let $M_1$, $M_2$, $M_3$
	be finitely generated $\Lambda$-modules. 
	\begin{enumerate}
		\item 
		If $\pi: M_1 \twoheadrightarrow M_2$ is an epimorphism, then
		$\Fitt_{\Lambda}^{\max}(M_1) \subseteq \Fitt_{\Lambda}^{\max}(M_2)$.
		\item
		If $M_1 \rightarrow M_2 \rightarrow M_3 \rightarrow 0$
		is an exact sequence, then
		\[
		\Fitt_{\Lambda}^{\max}(M_1) \cdot \Fitt_{\Lambda}^{\max}(M_3) 
		\subseteq \Fitt_{\Lambda}^{\max}(M_2).
		\]
	\end{enumerate}
\end{lemma}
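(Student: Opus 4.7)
The plan is to adapt the commutative arguments from Lemma \ref{lem:basic-props-comm}(i) and (iii) to the noncommutative setting, invoking Theorem \ref{thm:max-Fitting} to select presentations that realise the maximal Fitting invariant whenever needed.

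For part (i), I would fix a presentation $\Lambda^{a_1} \xrightarrow{h_1} \Lambda^{b_1} \xrightarrow{\pi_1} M_1 \rightarrow 0$ with $\Fitt_{\Lambda}(h_1) = \Fitt_{\Lambda}^{\max}(M_1)$. The composite $\pi_2 := \pi \circ \pi_1 : \Lambda^{b_1} \twoheadrightarrow M_2$ is still surjective, and its kernel is finitely generated since $\Lambda$ is noetherian; adjoining finitely many additional relations to $h_1$ produces a finite presentation $\Lambda^{a_2} \xrightarrow{(h_1 \mid \ast)} \Lambda^{b_1} \xrightarrow{\pi_2} M_2 \rightarrow 0$ in which $h_1$ appears as a sub-block. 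Because every $b_1 \times b_1$ submatrix of $h_1$ is also a $b_1 \times b_1$ submatrix of $(h_1 \mid \ast)$, one obtains
\[
\Fitt_{\Lambda}^{\max}(M_1) = \Fitt_{\Lambda}(h_1) \subseteq \Fitt_{\Lambda}((h_1 \mid \ast)) \subseteq \Fitt_{\Lambda}^{\max}(M_2).
\]

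For part (ii), I would first reduce to a short exact sequence: setting $M_1' := \iota(M_1)$ gives an exact sequence $0 \rightarrow M_1' \rightarrow M_2 \rightarrow M_3 \rightarrow 0$, and part (i) applied to the surjection $M_1 \twoheadrightarrow M_1'$ yields $\Fitt_{\Lambda}^{\max}(M_1) \subseteq \Fitt_{\Lambda}^{\max}(M_1')$. Assuming henceforth that $\iota$ is injective, I would fix presentations $h_1$ of $M_1$ and $h_3$ of $M_3$ realising the respective maximal Fitting invariants. The horseshoe lemma (applied as in the commutative proof) then yields a finite presentation $h_2$ of $M_2$ of block upper triangular shape
\[
h_2 = \begin{pmatrix} h_1 & g \\ 0 & h_3 \end{pmatrix}
\]
for some block $g$. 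For any $b_i \times b_i$ submatrix $H_i$ of $h_i$ ($i = 1, 3$), selecting the matching rows of $h_2$ exhibits a $(b_1+b_3) \times (b_1+b_3)$ submatrix of $h_2$ of the form $H := \begin{pmatrix} H_1 & G \\ 0 & H_3 \end{pmatrix}$.

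The new ingredient compared with the commutative case is the multiplicativity of the reduced norm on block upper triangular matrices, $\Nrd(H) = \Nrd(H_1) \cdot \Nrd(H_3)$; this is the main technical obstacle and I would verify it by extending scalars to a splitting field $E$ of $A$, where $E \otimes_F A$ decomposes as a product of matrix algebras, $\Nrd$ is computed componentwise as an ordinary determinant, and the classical block-triangular identity for determinants applies (with independence of $E$ coming from the general theory of reduced norms recalled in \S\ref{subsec:reduced-norms}). Granted this identity, each product $\Nrd(H_1) \cdot \Nrd(H_3)$ lies in $\Fitt_{\Lambda}(h_2)$, whence
\[
\Fitt_{\Lambda}^{\max}(M_1) \cdot \Fitt_{\Lambda}^{\max}(M_3) = \Fitt_{\Lambda}(h_1) \cdot \Fitt_{\Lambda}(h_3) \subseteq \Fitt_{\Lambda}(h_2) \subseteq \Fitt_{\Lambda}^{\max}(M_2),
\]
completing the proof.
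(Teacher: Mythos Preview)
Your proposal is correct and follows essentially the same approach as the paper, which simply asserts that the commutative arguments of Lemma~\ref{lem:basic-props-comm}(i) and (iii) go through ``without any significant changes.'' You have correctly identified and filled in the one genuinely new detail needed in the noncommutative setting, namely the multiplicativity $\Nrd(H) = \Nrd(H_1)\cdot\Nrd(H_3)$ for block upper triangular matrices, and your verification via scalar extension to a splitting field is sound.
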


However, the proof of Lemma \ref{lem:basic-props-comm}(ii) only gives
an inclusion
\[
	\Fitt_{\Lambda}^{\max}(M_1) \cdot \Fitt_{\Lambda}^{\max}(M_3)
	\subseteq \Fitt_{\Lambda}^{\max}(M_1 \oplus M_3)
\]
which is a special case of Lemma \ref{lem:basic-props-noncomm}(ii).
We will treat the question of whether this inclusion is actually an equality
in \S \ref{sec:additivity} below.\\

It is hard to decide in general whether a given presentation gives
a maximal Fitting invariant. In this direction we have the following
result.

\begin{proposition} \label{prop:Fitt-of-quadratic}
	Let $\Lambda$ be a Fitting order and let $M$ be a finitely generated
	$\Lambda$-module. If $M$ admits a quadratic presentation $h$, i.e.~a
	finite presentation of the form
	\[
	\Lambda^a \stackrel{h}{\longrightarrow} \Lambda^a \longrightarrow
	M \longrightarrow 0,
	\]
	then $\Fitt_{\Lambda}^{\max}(M) = \Fitt_{\Lambda}(h)$.
\end{proposition}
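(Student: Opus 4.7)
The inclusion $\Fitt_\Lambda(h) \subseteq \Fitt_\Lambda^{\max}(M)$ is immediate from the definition of the maximal Fitting invariant; the content of the proposition is the reverse. Equivalently, for every other finite presentation $h' \colon \Lambda^{a'} \to \Lambda^{b'}$ of $M$, I plan to show that $\Fitt_\Lambda(h') \subseteq \langle \Nrd(h) \rangle_{\mathcal{I}(\Lambda)}$. The case $a' < b'$ is trivial, so I assume $a' \geq b'$ and set $k := a' - b'$.

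The first step is to stabilize both presentations to a common matrix size via two Fitting-preserving moves: direct sum with an identity block and padding by zero rows. Choose $c := \max(0, a - b')$ and $c' := \max(0, b' - a)$, and put $B := \max(a, b')$, $A := B + k$. Let $\tilde h$ be the $A \times B$ matrix obtained from $h \oplus \boldsymbol{1}_{c'}$ by appending $k$ zero rows at the bottom, and let $\tilde h' := h' \oplus \boldsymbol{1}_c$, also of size $A \times B$. Both still present $M$. An easy inspection of the $B \times B$ submatrices---using that the reduced norm vanishes on any matrix with a zero row or column, that it is multiplicative on block diagonal matrices, and that $\Nrd(\boldsymbol{1}_c) = 1$---yields $\Fitt_\Lambda(\tilde h) = \langle \Nrd(h) \rangle_{\mathcal{I}(\Lambda)}$ and $\Fitt_\Lambda(\tilde h') = \Fitt_\Lambda(h')$.

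Since $\tilde h$ and $\tilde h'$ are two presentations of $M$ of equal dimensions, the Schanuel-type argument sketched immediately before Theorem \ref{thm:max-Fitting} supplies invertible matrices $X \in \GL_A(\Lambda)$ and $Y \in \GL_B(\Lambda)$ satisfying $\tilde h \circ X = Y \circ \tilde h'$, whence $\tilde h' = X \tilde h Y^{-1}$ at the matrix level. For any $B \times B$ submatrix $H'$ of $\tilde h'$ obtained by selecting a row set $R \subseteq \{1, \ldots, A\}$ of size $B$, we have $H' = X_{R, :} \cdot \tilde h \cdot Y^{-1}$. The crucial observation is that $\tilde h$ has the block shape
\[
\tilde h = \begin{pmatrix} \tilde h_0 \\ 0_{k \times B} \end{pmatrix}, \qquad \tilde h_0 := h \oplus \boldsymbol{1}_{c'} \in M_{B \times B}(\Lambda),
\]
so that $X_{R, :} \cdot \tilde h = X_{R, \{1, \ldots, B\}} \cdot \tilde h_0$ collapses to a product of two $B \times B$ matrices. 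Applying multiplicativity of the reduced norm on square matrices of the same size gives
\[
\Nrd(H') = \Nrd(X_{R, \{1, \ldots, B\}}) \cdot \Nrd(\tilde h_0) \cdot \Nrd(Y)^{-1} = \Nrd(X_{R, \{1, \ldots, B\}}) \cdot \Nrd(h) \cdot \Nrd(Y)^{-1}.
\]
Since $\Nrd(Y) \in \mathcal{I}(\Lambda)^{\times}$ and $\Nrd(X_{R, \{1, \ldots, B\}}) \in \mathcal{I}(\Lambda)$, each such $\Nrd(H')$ lies in $\langle \Nrd(h) \rangle_{\mathcal{I}(\Lambda)}$, which forces $\Fitt_\Lambda(h') = \Fitt_\Lambda(\tilde h') \subseteq \langle \Nrd(h) \rangle_{\mathcal{I}(\Lambda)}$, as required.

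The main obstacle will be the careful bookkeeping around the stabilization---verifying that the two Fitting-preserving moves combine to yield matrices of compatible dimensions and that no contributing submatrix is overlooked. Once that setup is in place, the rest of the argument reduces to the single factorization trick above: the zero rows of $\tilde h$ force the rectangular product $X_{R,:} \cdot \tilde h$ to collapse to a product of two $B \times B$ matrices, after which the multiplicativity of $\Nrd$ on square matrices finishes the job with no further machinery (in particular, no Cauchy--Binet type formula for reduced norms is required).
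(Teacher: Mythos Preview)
Your argument is correct. The paper itself gives no proof at all here---it simply cites \cite[Proposition~1.1(4)]{MR2976321}---so your write-up is a genuine, self-contained proof where the paper only offers a pointer to the literature.

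The key steps you use (stabilising the two presentations to a common shape $A\times B$, invoking the Krull--Schmidt/projective-cover argument sketched before Theorem~\ref{thm:max-Fitting} to produce $X\in\GL_A(\Lambda)$ and $Y\in\GL_B(\Lambda)$ with $\tilde h'=X\tilde h Y^{-1}$, and then exploiting the block of zero rows in $\tilde h$ to collapse each $B\times B$ minor of $\tilde h'$ into a genuine product of three $B\times B$ matrices) are all sound. In particular, the zero-row trick is the decisive observation: it converts the potentially awkward rectangular product $X_{R,:}\cdot\tilde h$ into the square product $X_{R,\{1,\dots,B\}}\cdot\tilde h_0$, so that multiplicativity of $\Nrd$ on $M_{B\times B}(A)$ applies directly and no Cauchy--Binet analogue for reduced norms is needed. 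Your verification that the two stabilisation moves (direct sum with an identity block, padding by zero rows) preserve the Fitting invariant is also correct---the only submatrices contributing a nonzero reduced norm are those avoiding all zero rows and including all identity rows, since a zero row or zero column forces $\Nrd=0$ componentwise. The appeal to the discussion before Theorem~\ref{thm:max-Fitting} is legitimate: once the two free presentations have been arranged to have equal dimensions, the semiperfectness of $\Lambda$ (projective covers plus Krull--Schmidt) yields the required invertible $X$ and $Y$, exactly as asserted there.
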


\begin{proof}
 This follows from \cite[Proposition 1.1 (4)]{MR2976321}.	
\end{proof}

\begin{remark}
	We briefly discuss the relation of noncommutative Fitting invariants
	to algebraic $K$-theory. 
	This will not be used in the following.
	For background on algebraic $K$-theory we refer the reader to
	\cite{MR892316} and \cite{MR0245634}.
	
	Suppose that $M$ admits a quadratic presentation
	$h$ which is injective. Then $M$ is torsion as an $\mathfrak o$-module
	and therefore defines a class $[M]$ in the relative algebraic $K$-group
	$K_0(\Lambda, A)$ associated to the ring homomorphism 
	$\Lambda \hookrightarrow A$. Moreover, the matrix $h$ belongs
	to $\GL_a(A)$ and so defines a class $[h]$ in $K_1(A)$ such that
	$\partial([h]) = [M]$, where $\partial: K_1(A) \rightarrow
	K_0(\Lambda,A)$ denotes the connecting homomorphism of relative
	algebraic $K$-theory. The reduced norm induces a group homomorphism
	$\Nrd: K_1(A) \rightarrow \zeta(A)^{\times}$ such that 
	$\Nrd([h]) = \Nrd(h)$. Now suppose that $x \in K_1(A)$ is a second
	pre-image of $[M]$. Then $[h]x^{-1}$ lies in the image of $K_1(\Lambda)$
	and therefore $\Nrd(x) = \Nrd([h]) \cdot \Nrd(y)$ for some
	$y \in K_1(\Lambda)$. 
	As $\Nrd(y) \in \mathcal{I}(\Lambda)^{\times}$,
	the $\mathcal{I}(\Lambda)$-ideals generated by $\Nrd([h])$ 
	and $\Nrd(x)$ coincide.
	In other words, for any $x \in K_1(A)$ such that $\partial(x) = [M]$
	one has
	\[
		\Fitt_{\Lambda}^{\max}(M) = 
		\langle \Nrd(x) \rangle_{\mathcal{I}(\Lambda)}.
	\]
	Now suppose that $\Fitt_{\Lambda}^{\max}(M)$ is generated by some
	$\xi \in \zeta(A)^{\times}$. Then $\xi \Nrd(x)^{-1}$ belongs to
	$\mathcal{I}(\Lambda)^{\times}$, but we cannot conclude in general
	that $\xi \Nrd(x)^{-1}$ lies in $\Nrd(K_1(\Lambda))$.
	The more involved notion of $\Nrd(\Lambda)$-equivalence classes
	in \cite{MR2609173} is designed in such a way that this conclusion
	works.
\end{remark}

\section{Fitting invariants and annihilation}

\subsection{Generalised adjoint matrices}
If $R$ is a commutative ring and $M$ is a finitely presented $R$-module, 
we know by Theorem \ref{thm:fitt-ann-comm} that $\Fitt_R(M)$ is
always contained in the $R$-annihilator ideal of $M$. The main ingredient
of the proof was the existence of adjoint matrices. We now generalise 
this concept.

Let $\Lambda$ be a Fitting order.
Choose $n \in \N$ and let $H \in M_{n \times n}(\Lambda)$.
Then recalling the notation of \S \ref{subsec:reduced-norms},
decompose $H$ into
\[
H = \sum_{i=1}^{t} H_{i} \in M_{n \times n}(\Lambda') = \bigoplus_{i=1}^t  M_{n \times n}(\Lambda'_{i}).
\]
Let $m_{i} = n_{i} \cdot s_{i} \cdot n$, where $s_i$ denotes the Schur index
of $D_i$ so that $[D_i:F_i] = s_i^2$.
The reduced characteristic polynomial $f_{i}(X) = \sum_{j=0}^{m_{i}} \alpha_{ij}X^{j}$ of $H_{i}$
has coefficients in $\mathfrak{o}_{i}$.
Moreover, the constant term $\alpha_{i0}$ is equal to 
$\Nrd(H_{i}) \cdot (-1)^{m_{i}}$.
We put
\[
H_{i}^{\ast} := (-1)^{m_{i}+1} \cdot \sum_{j=1}^{m_i} \alpha_{ij}H_{i}^{j-1}, \quad H^{\ast} := \sum_{i=1}^{t} H_{i}^{\ast}.
\]
We call $H^{\ast}$ the \emph{generalised adjoint matrix} of $H$.

\begin{lemma}\label{lem:ast}
	We have $H^{\ast} \in M_{n\times n} (\Lambda')$ and $H^{\ast} H = H H^{\ast} = \Nrd(H) \cdot \boldsymbol{1}_{n}$.
\end{lemma}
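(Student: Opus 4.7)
Since the decomposition $M_{n\times n}(\Lambda') = \bigoplus_{i=1}^t M_{n\times n}(\Lambda_i')$ respects multiplication and since $\Nrd(H) = \sum_{i=1}^t \Nrd(H_i)$, it suffices to prove both claims componentwise, i.e.\ to show that $H_i^{\ast} \in M_{n\times n}(\Lambda_i')$ and that $H_i H_i^{\ast} = H_i^{\ast} H_i = \Nrd(H_i) \cdot \boldsymbol{1}_n$ for each $i$. The integrality assertion is then immediate from the definition: the coefficients $\alpha_{ij}$ lie in $\mathfrak{o}_i = \zeta(\Lambda_i')$, and $H_i \in M_{n\times n}(\Lambda_i')$, so each power $H_i^{j-1}$ and hence each $\mathfrak{o}_i$-linear combination of them remains in $M_{n\times n}(\Lambda_i')$.

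The identity $H_i H_i^{\ast} = H_i^{\ast} H_i = \Nrd(H_i)\cdot\boldsymbol{1}_n$ is essentially a reformulation of the Cayley--Hamilton theorem for the reduced characteristic polynomial. Choose a splitting field $E \supseteq F_i$ of $D_i$, so that $E \otimes_{F_i} M_{n\times n}(A_i) \simeq M_{m_i \times m_i}(E)$. Under this isomorphism, $1 \otimes H_i$ becomes an $m_i \times m_i$ matrix over $E$ whose characteristic polynomial (in the usual sense) is, by the definition of the reduced characteristic polynomial, equal to $f_i(X) = \sum_{j=0}^{m_i} \alpha_{ij} X^j$. Classical Cayley--Hamilton then gives $f_i(H_i) = 0$, which rewritten reads
\[
\alpha_{i0} \cdot \boldsymbol{1}_n = -\sum_{j=1}^{m_i} \alpha_{ij} H_i^{j} = H_i \cdot \Bigl(-\sum_{j=1}^{m_i} \alpha_{ij} H_i^{j-1}\Bigr),
\]
and likewise with $H_i$ on the right (since the $\alpha_{ij}$ are central and $H_i$ commutes with its own powers). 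Substituting $\alpha_{i0} = (-1)^{m_i} \Nrd(H_i)$ and multiplying by $(-1)^{m_i+1}$ on both sides yields exactly $H_i H_i^{\ast} = H_i^{\ast} H_i = \Nrd(H_i)\cdot\boldsymbol{1}_n$, as required.

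Summing these componentwise identities across $i = 1, \dots, t$ and using that $H_i H_j^{\ast} = 0$ for $i \neq j$ (because $H_i$ and $H_j^{\ast}$ live in orthogonal direct summands of $M_{n\times n}(\Lambda')$) gives $H H^{\ast} = H^{\ast} H = \Nrd(H) \cdot \boldsymbol{1}_n$. The only nontrivial input is the validity of Cayley--Hamilton for the reduced characteristic polynomial, which one handles by passing to a splitting field as above; no genuine obstacle arises since both sides of each identity are $F$-rational and the equality holds after tensoring with $E$.
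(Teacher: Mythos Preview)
Your proof is correct and follows essentially the same approach as the paper: the paper simply asserts that $H^{\ast} \in M_{n\times n}(\Lambda')$ is ``clear by the above considerations'' (namely that the $\alpha_{ij}$ lie in $\mathfrak{o}_i$) and then uses $f_i(H_i)=0$ to compute $H_i^{\ast} H_i = H_i H_i^{\ast} = (-1)^{m_i+1}(-\alpha_{i0}) = \Nrd(H_i)$ componentwise. Your version is more expansive---you spell out the componentwise reduction, justify the reduced Cayley--Hamilton identity by passing to a splitting field, and make the orthogonality of the summands explicit---but the underlying argument is identical.
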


\begin{proof}
	The first assertion is clear by the above considerations.
	Since $f_{i}(H_{i}) = 0$, we find that
	\[
	H_{i}^{\ast} \cdot H_{i} = H_{i} \cdot H_{i}^{\ast}  = (-1)^{m_{i}+1} (-\alpha_{i0}) = \Nrd(H_{i}),
	\]
	as desired.
\end{proof}

\begin{remark}
	Note that the above definition of $H^{\ast}$ differs slightly from the definition in \cite[\S 4]{MR2609173}.
	Here we follow the treatment in \cite[\S 3.6]{MR3092262}.
\end{remark}

\begin{remark} \label{rem:reduced-adjoint}
	Let $E/F$ be a separable field extension such that $A_E := 
	E \otimes_F A$ splits. We may view $H$ as an element
	of $M_{n \times n}(A_E)$ which is a finite sum of matrix rings	
	over $E$. Then $H^{\ast} \in M_{n \times n}(A_E)$ is just the
	sum of the adjoint matrices in each component. As such it might have
	been more natural to call $H^{\ast}$ the `reduced adjoint matrix'
	of $H$.
\end{remark}

\begin{example} \label{ex:0-ast}
	Let $0 \in M_{n \times n}(R)$ where $R$ is a commutative ring.
	Then for the adjoint matrix $0^{\ast}$ we have
	$0^{\ast} = 1$ if $n = 1$ and $0^{\ast} = 0$ if $n>1$.
	Let $p$ be a prime and let $G$ be a finite group. Denote the
	commutator subgroup of $G$ by $G'$ and let $E/\Q_p$
	be a splitting field for $\Q_p[G]$. 
	Then Wedderburn's theorem for the algebra $E[G]$ implies that
	for $0 \in M_{1 \times 1}(\Z_p[G])$ we have
	\[
			0^{\ast} = \frac{1}{|G'|} \sum_{g\in G'} g.
	\]
\end{example}

\begin{example} \label{ex:H1ast}
	Let $\Lambda$ be a Fitting order and let $H \in M_{n \times n}(\Lambda)$.
	Then for every positive integer $m$ one has
	\[
		\left(\begin{array}{cc}
		 H & 0 \\ 0  & \boldsymbol{1}_m
		\end{array}
		\right)^{\ast} = 
		\left(\begin{array}{cc}
		H^{\ast} & 0 \\ 0  & \Nrd(H) \boldsymbol{1}_m
		\end{array}
		\right).
	\]
	In view of Remark \ref{rem:reduced-adjoint}, this follows from
	the respective statement for adjoint matrices over commutative rings
	(for a more detailed proof see \cite[Theorem 1.7.8(iii)]{watson_thesis}).
\end{example}

\subsection{Denominator ideals}

We define
\[
\mathcal{H}(\Lambda)  := \{ x \in \zeta(\Lambda) \mid xH^{\ast} \in 
M_{b \times b}(\Lambda) \, \forall H \in M_{b \times b}(\Lambda) \, 
\forall b \in \N \}
\]
and call $\mathcal{H}(\Lambda)$ the \emph{denominator ideal} of $\Lambda$.
We claim that 
\begin{equation} \label{eqn:HI_equals_H}
\mathcal{H}(\Lambda) \cdot \mathcal{I}(\Lambda) = \mathcal{H}(\Lambda) 
\subseteq \zeta(\Lambda)
\end{equation}
and so $\mathcal{H}(\Lambda)$ is in fact an ideal in the 
$\mathfrak{o}$-order $\mathcal{I}(\Lambda)$.
We follow an argument of David Watson 
\cite[Lemma 1.10.9]{watson_thesis}.
Let $x \in \mathcal{H}(\Lambda)$ and
$H_1 \in  M_{b_1 \times b_1}(\Lambda)$, 
$H_2 \in  M_{b_2 \times b_2}(\Lambda)$ with positive integers $b_1$
and $b_2$. We have to show that $x \Nrd(H_1)H_2^{\ast}$ belongs
to $M_{b_2 \times b_2}(\Lambda)$. By Example \ref{ex:H1ast}
we may assume that $b_1 = b_2$. We now compute
\[
	x \Nrd(H_1)H_2^{\ast} = x H_1 H_1^{\ast} H_2^{\ast}
	= H_1 x (H_2 H_1)^{\ast} \in M_{b_2 \times b_2}(\Lambda).
\]

\begin{remark}
	The denominator ideal $\mathcal{H}(\Lambda)$ measures the failure
	of the generalised adjoint matrices to have entries in $\Lambda$.
\end{remark}

\begin{remark}
	Let $\Lambda'$ be a maximal order containing $\Lambda$. 
	The central conductor 
	of $\Lambda'$ over $\Lambda$
	is defined to be
	$\mathcal F(\Lambda) := \left\{x \in \zeta(\Lambda') \mid
	x \Lambda' \subseteq \Lambda \right\}$.
	It is clear from Lemma \ref{lem:ast} that we
	always have $\mathcal F(\Lambda) \subseteq \mathcal H(\Lambda)$.
	Note that in particular we have
	$\mathcal{H}(\Lambda') = \zeta(\Lambda')$ for every maximal
	Fitting order $\Lambda'$.
\end{remark}

We now consider the case of $p$-adic group rings in more detail.
If $p$ is a prime and $G$ is a finite group, we set
\[
	 \mathcal{I}_{p}(G) := \mathcal{I}(\Z_{p}[G]), \quad
	 \mathcal{H}_{p}(G)  :=  \mathcal{H}(\Z_{p}[G]).
\]

\begin{proposition} \label{prop:best-denominators}
	Let $p$ be prime and $G$ be a finite group. Then $\mathcal{H}_{p}(G) = \zeta(\Z_{p}[G])$ if and only if $p$ 
	does not divide the order of the commutator subgroup of $G$. Moreover, in this case we have
	$\mathcal{I}_{p}(G) = \zeta(\Z_{p}[G])$.
\end{proposition}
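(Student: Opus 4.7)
The plan is to treat the two directions of the biconditional separately. The ``only if'' direction is immediate: if $\mathcal{H}_p(G) = \zeta(\Z_p[G])$ then $1 \in \mathcal{H}_p(G)$, so the defining condition applied to the $1 \times 1$ zero matrix, together with Example \ref{ex:0-ast}, forces $|G'|^{-1} \sum_{g \in G'} g \in \Z_p[G]$; this requires $|G'|^{-1} \in \Z_p$, equivalently $p \nmid |G'|$.

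For the ``if'' direction, assume $p \nmid |G'|$. Using the inclusions $\mathcal{H}_p(G) \subseteq \zeta(\Z_p[G]) \subseteq \mathcal{I}_p(G)$ together with the identity $\mathcal{H}_p(G) = \mathcal{H}_p(G) \cdot \mathcal{I}_p(G)$ from \eqref{eqn:HI_equals_H}, both equalities $\mathcal{H}_p(G) = \zeta(\Z_p[G])$ and $\mathcal{I}_p(G) = \zeta(\Z_p[G])$ will follow once one establishes, for every $H \in M_{b \times b}(\Z_p[G])$ with $b \in \N$, the two integrality statements $\Nrd(H) \in \zeta(\Z_p[G])$ and $H^{\ast} \in M_{b \times b}(\Z_p[G])$. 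The structural input is the classical fact that under $p \nmid |G'|$ each Wedderburn component of $\Q_p[G]$ has Schur index one, so $\Q_p[G] = \bigoplus_i M_{n_i}(F_i)$ for $p$-adic fields $F_i$, and after fixing a maximal $\Z_p$-order $\Lambda' \supseteq \Z_p[G]$ with components $M_{n_i}(\mathfrak{o}_i)$, the reduced norm is the componentwise determinant. One therefore automatically has $\Nrd(H) \in \zeta(\Lambda') = \bigoplus_i \mathfrak{o}_i$ and $H^{\ast} \in M_{b \times b}(\Lambda')$.

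The substantive remaining step is to descend these integrality statements from $\Lambda'$ down to $\Z_p[G]$. The plan is to analyse the central primitive idempotents $e_i$ of $\Q_p[G]$ together with a compatible set of matrix units inside each Wedderburn component, and to show that under $p \nmid |G'|$ the denominators appearing in these idempotents and matrix units are $p$-adic units; from this one can verify componentwise that every $\Nrd(H)$ lies in the class-sum sublattice $\zeta(\Z_p[G])$ and every $H^{\ast}$ has entries in $\Z_p[G]$.

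The main obstacle is precisely this descent. Note that $\zeta(\Z_p[G])$ can be a proper sublattice of $\zeta(\Lambda')$ even when $p \nmid |G'|$ (for instance for $G = S_3$ and $p = 2$), so the result does not follow from the maximal-order analysis alone; one has to exploit the specific character-theoretic relations among class sums of $G$ and the hypothesis $p \nmid |G'|$ to control where the reduced norms and generalised adjoints actually land. An alternative route is to sidestep the descent by reducing, via Morita-type manipulations on each Wedderburn component, to the case $b = 1$, and then to verify the two integrality statements by direct calculation for elements of $\Z_p[G]$.
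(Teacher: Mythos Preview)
Your treatment of the ``only if'' direction via Example \ref{ex:0-ast} and your deduction of $\mathcal{I}_p(G) = \zeta(\Z_p[G])$ from \eqref{eqn:HI_equals_H} are correct and coincide with the paper's argument. For the ``if'' direction the paper simply cites \cite[Proposition 4.4]{MR3092262} rather than giving a self-contained proof, so your attempt goes beyond what the paper does.

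However, your attempt at the ``if'' direction has a genuine gap, and the descent strategy you outline cannot work as stated. You propose to show that ``the denominators appearing in these idempotents and matrix units are $p$-adic units'', but your own example $G = S_3$, $p = 2$ already refutes this: the central primitive idempotent $e_1 = \frac{1}{6}\sum_{g \in S_3} g$ has denominator $6$, which is not a $2$-adic unit, so $e_1 \notin \Z_2[S_3]$. Thus working with the Wedderburn idempotents of $\Q_p[G]$ and trying to descend from $\Lambda'$ is the wrong framework.

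The structural input you actually need is stronger than trivial Schur indices for $\Q_p[G]$: under $p \nmid |G'|$, the \emph{integral} group ring $\Z_p[G]$ itself decomposes as a direct product $\bigoplus_i M_{n_i \times n_i}(R_i)$ of matrix rings over commutative (local) rings $R_i$. This is precisely the result \cite[Corollary, p.~390]{MR704622} invoked in the proof of Corollary \ref{cor:p-adic-additive}. With this decomposition in hand there is no descent problem at all: for $H \in M_{b \times b}(\Z_p[G])$ one views each component $H_i$ as an element of $M_{bn_i \times bn_i}(R_i)$, and then $\Nrd(H_i) = \det(H_i) \in R_i$ and the generalised adjoint $H_i^\ast$ is the ordinary adjoint in $M_{bn_i \times bn_i}(R_i)$. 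Hence $\Nrd(H) \in \bigoplus_i R_i = \zeta(\Z_p[G])$ and $H^\ast \in M_{b \times b}(\Z_p[G])$ immediately. The relevant idempotents are the \emph{block} idempotents of $\Z_p[G]$, which always lie in $\Z_p[G]$, not the Wedderburn idempotents of $\Q_p[G]$.
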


\begin{proof}
	The first claim is a special case of \cite[Proposition 4.4]{MR3092262}.
	Note that Example \ref{ex:0-ast} shows that
	$\mathcal{H}_{p}(G) = \zeta(\Z_{p}[G])$ is only possible if
	$p$ does not divide the order of the commutator subgroup of $G$.
	The second claim follows easily from \eqref{eqn:HI_equals_H}.
\end{proof}

Let $\overline{\Q}_p$ be a separable closure of $\Q_p$.
For an irreducible character 
$\chi: G \rightarrow \overline{\Q}_p$ we put
$\Q_p(\chi) := \Q_p(\chi(g) \mid g \in G)$.
In the case of $p$-adic group rings
the central conductor is explicitly given by 
Jacobinski's formula \cite{MR0204538}
(see \cite[Theorem 27.13]{MR632548})
\begin{equation} \label{eqn:conductor-formula}
\mathcal{F}_p(G) :=
\mathcal F(\Z_p[G]) = \bigoplus_{\chi} \frac{|G|}{\chi(1)}
\mathcal D^{-1} (\Q_p(\chi) / \Q_p),
\end{equation}
where $\mathcal D^{-1} (\Q_p(\chi)/ \Q_p)$ denotes the inverse different of
the extension $\Q_p(\chi)$ over $\Q_p$
and the sum runs over all irreducible characters of $G$ 
modulo the natural action of the absolute Galois group
of $\Q_p$ on the irreducible characters of $G$. 

\begin{example} \label{c7-ex:dihedral}
	Let $p$ and $\ell$ be primes with $p$ odd. 
	We consider the group ring $\Z_{\ell}[D_{2p}]$, 
	where $D_{2 p}$ denotes the dihedral group of order $2p$.
	In the case $p=3$, one has
	$D_{6} \simeq S_{3}$, the symmetric group on three letters.
	Then we have
	\[
	\mathcal{H}_{\ell}(D_{2p}) = \left\{ \begin{array}{lll}
	\zeta(\Z_{\ell}[D_{2p}]) & \mbox{ if } & p\neq \ell \\
	\mathcal{F}_{p}(D_{2p}) & \mbox{ if } & p=\ell.
	\end{array}\right.
	\]
	In fact, the result follows from Proposition 
	\ref{prop:best-denominators} if $p \neq \ell$.
	In the case $p=\ell$, the result is established in 
	\cite[Example 6]{MR3092262}. The corresponding integrality rings
	have already been determined in Example \ref{ex:dihedral-int-ring}.
\end{example}

\begin{example} \label{ex:Aff(q)}
	Let $p$ be a prime and let $q = \ell^{n}$ be a prime power.
	We consider the group $\mathrm{Aff}(q) = \F_q \rtimes \F_q^{\times}$
	of affine transformations 
	on $\F_q$, the finite field 
	with $q$ elements.	
	Let $\mathfrak{M}_{p}(\Aff(q))$ be a maximal $\Z_{p}$-order such that $\Z_{p}[\Aff(q)] \subseteq \mathfrak{M}_{p}(\Aff(q)) \subseteq \Q_{p}[\Aff(q)]$.
	Then by \cite[Proposition 6.7]{MR3461042} we have
	\[
	\mathcal{H}_{p}(\Aff(q)) = \left\{ \begin{array}{lll}
	\zeta(\Z_{p}[\Aff(q)]) & \mbox{ if } & p \neq \ell \\
	\mathcal{F}_{p}(\Aff(q)) & \mbox{ if } & p=\ell \neq 2;
	\end{array}\right.
	\]
	\[
	\mathcal{I}_{p}(\Aff(q)) = \left\{ \begin{array}{lll}
	\zeta(\Z_{p}[\Aff(q)]) & \mbox{ if } & p \neq \ell\\
	\zeta(\mathfrak{M}_{p}(\Aff(q))) & \mbox{ if } & p=\ell \neq 2.
	\end{array}\right.
	\]
	If $p=\ell=2$, then we have containments
	\[
	2\mathcal{H}_{2}(\Aff(q)) \subseteq \mathcal{F}_{2}(\Aff(q)) \subseteq \mathcal{H}_{2}(\Aff(q)),
	\]
	\[
	2\zeta(\mathfrak{M}_{2}(\Aff(q))) \subseteq \mathcal{I}_{2}(\Aff(q)) \subseteq \zeta(\mathfrak{M}_{2}(\Aff(q))).
	\]
	Note that the commutator subgroup of $\Aff(q)$ is $\F_q$ so that
	the case $p \neq \ell$ again follows from Proposition
	\ref{prop:best-denominators}. An exact formula for the
	denominator ideal including the case
	$p = \ell = 2$ has been determined by David Watson 
	\cite[Example 3.6.6]{watson_thesis}
\end{example}

\begin{example}
	Let $S_4$ be the symmetric group on $4$ letters. 
	If $p$ is an odd prime, then 
	$\mathcal{I}_p(S_4) = \zeta(\mathfrak{M}_p(S_4))$
	and $\mathcal{H}_p(S_4) = \mathcal{F}_p(S_4)$. However, if $p=2$ we have
	\[
	\mathcal{F}_2(S_4) \subsetneq \mathcal{H}_2(S_4) 
	\subsetneq \zeta(\Z_2[S_4]);
	\]
	\[
	\zeta(\Z_2[S_4]) \subsetneq \mathcal{I}_2(S_4) 
	\subsetneq \zeta(\mathfrak M_2(S_4)).
	\]
	This follows from \cite[Proposition 6.8]{MR3461042}.
\end{example}

\begin{remark}
Even in the case of $p$-adic group rings, a general formula
for denominator ideals is still not available, though it would be
of significant interest for arithmetic applications. 
In particular, we seek good lower bounds. This question is extensively studied
in the PhD thesis of David Watson \cite{watson_thesis}. 
In particular, he
determines the denominator ideal $\mathcal{H}_p(G)$ for any
(non-abelian) group $G$ of order $p^3$.
\end{remark}

Now let $\Lambda$ be an arbitrary Fitting order and let $\Lambda'$
be a maximal order containing $\Lambda$. We define
a variant of the central conductor by
\[
	\mathcal{F}_{\zeta}(\Lambda) :=
	\left\{x \in \zeta(\Lambda') \mid
	x \zeta(\Lambda') \subseteq \zeta(\Lambda) \right\}.
\]
One clearly has an inclusion $\mathcal{F}(\Lambda)
\subseteq \mathcal{F}_{\zeta}(\Lambda)$, but this is not an equality
in general. 

\begin{example}
	Let $D_{2^a}$ be the dihedral group of order $2^a$, where
	$a \geq 3$.
	Then one can show (see \cite[Example 7]{MR3092262}) that
	\[
		[\mathcal{F}_{\zeta}(\Z_2[D_{2^a}]) : \mathcal{F}(\Z_2[D_{2^a}])]
		= 2^{a-2}.
	\]
\end{example}

\begin{remark}
	In the case where $\Lambda$ is a $p$-adic group ring one has an explicit
	formula for $\mathcal{F}_{\zeta}(\Lambda)$; see
	\cite[Proposition 6.12]{MR3092262}.
\end{remark}

Since the reduced characteristic polynomials have coefficients
in $\zeta(\Lambda')$, one can give the following lower bound
\cite[Proposition 6.3]{MR3092262} for $\mathcal{H}(\Lambda)$.

\begin{proposition}
	We have $\mathcal{F}_{\zeta}(\Lambda) \subseteq \mathcal{H}(\Lambda)$.
\end{proposition}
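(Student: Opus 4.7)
The plan is to take any $x \in \mathcal{F}_{\zeta}(\Lambda)$ and verify the two defining properties of $\mathcal{H}(\Lambda)$: namely that $x \in \zeta(\Lambda)$ and that $xH^{\ast} \in M_{b \times b}(\Lambda)$ for every $H \in M_{b \times b}(\Lambda)$ and every $b \in \mathbb{N}$. The first property is immediate: taking $1 \in \zeta(\Lambda')$ in the defining condition of $\mathcal{F}_{\zeta}(\Lambda)$ gives $x = x \cdot 1 \in \zeta(\Lambda)$.

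For the second property, the key idea is to rewrite the generalised adjoint matrix $H^{\ast}$ as a single polynomial in $H$ (rather than in the components $H_i$) whose coefficients lie in $\zeta(\Lambda')$. Let $e_1, \dots, e_t \in \zeta(A)$ denote the primitive central idempotents corresponding to the Wedderburn decomposition $A = A_1 \oplus \dots \oplus A_t$, so that $H_i = e_i H$ and $H_i^{j-1} = e_i H^{j-1}$ for all $j \geq 1$. Substituting into the definition of $H^{\ast}$ yields
\[
H^{\ast} = \sum_{i=1}^t (-1)^{m_i+1} \sum_{j=1}^{m_i} \alpha_{ij} e_i H^{j-1} = \sum_{j \geq 1} \beta_j H^{j-1},
\]
where $\beta_j := \sum_{i : m_i \geq j} (-1)^{m_i+1} \alpha_{ij} e_i \in \zeta(\Lambda')$, using that each $\alpha_{ij} \in \mathfrak{o}_i \subseteq \zeta(\Lambda_i') \subseteq \zeta(\Lambda')$.

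From this expression it follows that
\[
x H^{\ast} = \sum_{j \geq 1} (x \beta_j) H^{j-1}.
\]
Now $x \beta_j \in \zeta(\Lambda)$ for each $j$ because $\beta_j \in \zeta(\Lambda')$ and $x \in \mathcal{F}_{\zeta}(\Lambda)$, while $H^{j-1} \in M_{b \times b}(\Lambda)$ because $H \in M_{b \times b}(\Lambda)$. Hence each summand lies in $M_{b \times b}(\Lambda)$, and therefore so does $xH^{\ast}$, establishing $x \in \mathcal{H}(\Lambda)$.

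The only genuinely nontrivial step is the reformulation of $H^{\ast}$ as a polynomial in $H$: the individual components $H_i$ need not have entries in $\Lambda$, so one cannot naively argue component by component. Using the central idempotents $e_i \in \zeta(A)$ to rearrange the expression is what converts a statement about the matrices $H_i$ over $\Lambda'$ into one about powers of $H$ over $\Lambda$, and this is precisely the step that makes the hypothesis $x \zeta(\Lambda') \subseteq \zeta(\Lambda)$ (rather than the stronger $x \Lambda' \subseteq \Lambda$ defining $\mathcal{F}(\Lambda)$) sufficient.
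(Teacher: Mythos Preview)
Your proof is correct and follows essentially the same approach as the paper, which merely states that ``since the reduced characteristic polynomials have coefficients in $\zeta(\Lambda')$'' the inclusion follows, citing \cite[Proposition 6.3]{MR3092262}. Your argument spells out precisely this idea: expressing $H^{\ast}$ as a polynomial in $H$ with coefficients $\beta_j \in \zeta(\Lambda')$, so that multiplying by $x \in \mathcal{F}_{\zeta}(\Lambda)$ pushes the coefficients into $\zeta(\Lambda)$ while the powers $H^{j-1}$ remain in $M_{b \times b}(\Lambda)$.
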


\subsection{Fitting invariants and annihilation}

Now a proof similar to the commutative case shows the desired
annihilation result (see \cite[Theorem 4.2]{MR2609173} 
and \cite[Theorem 3.3]{MR3092262}).

\begin{theorem}\label{thm:fitt-ann}
	Let $\Lambda$ be a Fitting order and let $M$ be a finitely generated $\Lambda$-module. Then one has an inclusion
	\[
	\mathcal{H}(\Lambda) \cdot \Fitt_{\Lambda}^{\max}(M) \subseteq \Ann_{\zeta(\Lambda)}(M).
	\]
\end{theorem}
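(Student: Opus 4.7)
The plan is to adapt the proof of Theorem \ref{thm:fitt-ann-comm} from the commutative case, replacing the classical adjoint matrix with the generalised adjoint matrix of Lemma \ref{lem:ast}, and using the defining property of $\mathcal{H}(\Lambda)$ to control denominators.

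First I would pick a finite presentation $h$ of $M$ realising the maximal Fitting invariant, so that $\Fitt_{\Lambda}^{\max}(M) = \Fitt_{\Lambda}(h)$. If $a < b$ the claim is trivial, so assume $a \geq b$; since $\Fitt_{\Lambda}(h)$ is generated as an $\mathcal{I}(\Lambda)$-ideal by the reduced norms $\Nrd(H)$ with $H \in S_b(h)$, and $\mathcal{H}(\Lambda) \cdot \mathcal{I}(\Lambda) = \mathcal{H}(\Lambda)$ by \eqref{eqn:HI_equals_H}, it is enough to show that for each such submatrix $H$ and each $x \in \mathcal{H}(\Lambda)$, the element $x \cdot \Nrd(H) \in \zeta(\Lambda)$ annihilates $M$.

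Next I would follow the reduction used in the commutative proof: since $H$ is a $b \times b$ submatrix of $h$, the cokernel of $H: \Lambda^b \to \Lambda^b$ surjects naturally onto $M$, so it suffices to prove that $x \cdot \Nrd(H)$ annihilates the cokernel of $H$. By Lemma \ref{lem:ast} we have $H^{\ast} \in M_{b \times b}(\Lambda')$ and
\[
H^{\ast} H = H H^{\ast} = \Nrd(H) \cdot \boldsymbol{1}_b.
\]
The generalised adjoint $H^{\ast}$ need not have entries in $\Lambda$, but by the very definition of the denominator ideal, $xH^{\ast} \in M_{b \times b}(\Lambda)$. Thus the diagram
\[
\xymatrix{
	\Lambda^{b} \ar@{>}[rr]^{H}  & &
	\Lambda^{b} \ar@{>}[d]^{x\Nrd(H)} \ar@{>}[lld]_{xH^{\ast}}  \ar@{>>}[rr] & & M \ar@{>}[d]^{x\Nrd(H)} \\
	\Lambda^{b} \ar@{>}[rr]^{H} & & \Lambda^{b}  \ar@{>>}[rr] & & M
}
\]
commutes in the category of $\Lambda$-modules: the left square commutes because $xH^{\ast} \cdot H = x \Nrd(H) \cdot \boldsymbol{1}_b$, and the right square commutes because $x\Nrd(H) \in \zeta(\Lambda)$ (indeed lies in $\mathcal{H}(\Lambda) \subseteq \zeta(\Lambda)$), so multiplication by it is a well-defined $\Lambda$-linear endomorphism of $M$. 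Reading off the right-hand vertical map through the diagonal factorisation shows it is zero on the cokernel, and hence $x \Nrd(H) \in \Ann_{\zeta(\Lambda)}(M)$.

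The main obstacle, and the whole reason one must introduce $\mathcal{H}(\Lambda)$ in the first place, is precisely the point where the commutative proof would naively write $H^{\ast}$: over a noncommutative order the generalised adjoint need not live in $M_{b \times b}(\Lambda)$, so the factorisation $xH^{\ast} \cdot H = x\Nrd(H)\boldsymbol{1}_b$ only makes sense over $\Lambda$ after clearing denominators by an element of $\mathcal{H}(\Lambda)$. A subsidiary point to verify is that $x\Nrd(H)$ genuinely lies in $\zeta(\Lambda)$ so that scalar multiplication by it on $M$ is defined; this is immediate from $x \in \mathcal{H}(\Lambda) \subseteq \zeta(\Lambda)$ combined with \eqref{eqn:HI_equals_H}, which absorbs the reduced norm factor. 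Everything else is formal.
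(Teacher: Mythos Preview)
Your proposal is correct and follows essentially the same approach as the paper, which simply states that a proof ``similar to the commutative case'' works and refers to the original sources; you have spelled out precisely that adaptation, using the generalised adjoint matrix of Lemma~\ref{lem:ast} together with the defining property of $\mathcal{H}(\Lambda)$ and equation~\eqref{eqn:HI_equals_H} to clear denominators.
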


Since $\mathcal{F}(\Lambda)$ is contained in $\mathcal{H}(\Lambda)$,
the above
inclusion also holds with $\mathcal{H}(\Lambda)$ replaced by
$\mathcal{F}(\Lambda)$. However, if one wishes to compute annihilators
using $\mathcal{F}(\Lambda)$ then the following result shows that it
suffices to compute the Fitting invariant over the maximal order
(see \cite[Corollary 6.5 and Theorem 6.7]{MR3092262}).

\begin{proposition} \label{prop:ann-over-max}
	Let $\Lambda$ be a Fitting order and let $M$ be a 
	finitely generated $\Lambda$-module. Choose a maximal order
	$\Lambda'$ containing $\Lambda$. Then
	\[
		\mathcal{F}(\Lambda) \cdot \Fitt_{\Lambda}^{\max}(M) \subseteq
		\mathcal{F}(\Lambda) \cdot \Fitt_{\Lambda'}^{\max}
		(\Lambda' \otimes_{\Lambda} M) \subseteq
	    \Ann_{\zeta(\Lambda)}(M).
	\]
\end{proposition}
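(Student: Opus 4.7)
The plan is to treat the two inclusions separately. For the first, pick a finite presentation $h \colon \Lambda^a \to \Lambda^b \to M \to 0$ that realises the maximum, so that $\Fitt_\Lambda^{\max}(M) = \Fitt_\Lambda(h)$. Applying the right-exact functor $\Lambda' \otimes_\Lambda -$ yields the presentation
\[
\Lambda'^{a} \xrightarrow{\,h\,} \Lambda'^{b} \longrightarrow \Lambda' \otimes_\Lambda M \longrightarrow 0,
\]
where $h$ is now regarded over $\Lambda'$. The reduced norms of the $b \times b$ submatrices of $h$ are unchanged as elements of $\zeta(A)$. Since $\mathcal{I}(\Lambda) \subseteq \mathcal{I}(\Lambda') = \zeta(\Lambda')$ (the last equality by maximality of $\Lambda'$), this gives
\[
\Fitt_\Lambda^{\max}(M) = \Fitt_\Lambda(h) \subseteq \Fitt_{\Lambda'}(h) \subseteq \Fitt_{\Lambda'}^{\max}(\Lambda' \otimes_\Lambda M),
\]
and multiplying through by $\mathcal{F}(\Lambda)$ yields the first inclusion.

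For the second inclusion, the maximality of $\Lambda'$ gives $\mathcal{H}(\Lambda') = \zeta(\Lambda')$, so Theorem \ref{thm:fitt-ann} applied to $\Lambda' \otimes_\Lambda M$ over $\Lambda'$ produces
\[
\Fitt_{\Lambda'}^{\max}(\Lambda' \otimes_\Lambda M) \subseteq \Ann_{\zeta(\Lambda')}(\Lambda' \otimes_\Lambda M).
\]
To transport this annihilation back to $M$, I would construct, for each $x \in \mathcal{F}(\Lambda)$, the right $\Lambda$-module homomorphism
\[
\phi_x \colon \Lambda' \longrightarrow \Lambda, \qquad \lambda' \longmapsto x\lambda',
\]
which is well-defined because $x \Lambda' \subseteq \Lambda$. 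Tensoring over $\Lambda$ with $M$ gives a map $\psi_x \colon \Lambda' \otimes_\Lambda M \to \Lambda \otimes_\Lambda M = M$ with $\lambda' \otimes m \mapsto x\lambda' \cdot m$. Now given $y \in \Fitt_{\Lambda'}^{\max}(\Lambda' \otimes_\Lambda M) \subseteq \zeta(\Lambda')$ and $m \in M$, one has $y \otimes m = y \cdot (1 \otimes m) = 0$ in $\Lambda' \otimes_\Lambda M$; applying $\psi_x$ yields $xy \cdot m = \psi_x(y \otimes m) = 0$ in $M$. Since $\mathcal{F}(\Lambda)$ is an ideal of $\zeta(\Lambda')$ contained in $\zeta(\Lambda)$ (the latter because $x \cdot 1 = x \in \Lambda$ and $x$ centralises $\Lambda$), we have $xy \in \zeta(\Lambda)$, whence $xy \in \Ann_{\zeta(\Lambda)}(M)$, as required.

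The main obstacle is conceptual rather than computational: one must recognise that $\mathcal{F}(\Lambda)$ plays a double role, bridging the integrality rings $\mathcal{I}(\Lambda) \subseteq \mathcal{I}(\Lambda')$ on the Fitting-invariant side while simultaneously providing the canonical retraction $\psi_x \colon \Lambda' \otimes_\Lambda M \to M$ on the annihilator side. Once $\psi_x$ is in hand, the argument collapses to the maximal-order case of Theorem \ref{thm:fitt-ann}, which in turn essentially reduces to the commutative picture because on a maximal order the reduced norm already takes values in the centre.
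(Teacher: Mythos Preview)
Your argument is correct. The paper does not provide a self-contained proof of this proposition; it merely cites \cite[Corollary 6.5 and Theorem 6.7]{MR3092262}. Your approach---base-changing a maximal presentation to $\Lambda'$ for the first inclusion, then using $\mathcal{H}(\Lambda') = \zeta(\Lambda')$ together with the retraction $\psi_x$ induced by multiplication by $x \in \mathcal{F}(\Lambda)$ for the second---is the natural route and is essentially what underlies the cited results. One small remark: in the first step you in fact establish the stronger inclusion $\Fitt_{\Lambda}^{\max}(M) \subseteq \Fitt_{\Lambda'}^{\max}(\Lambda' \otimes_{\Lambda} M)$ before multiplying by $\mathcal{F}(\Lambda)$; this is worth stating explicitly, since it is of independent interest and makes clear that the conductor is only needed for the second inclusion.
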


\begin{example}
	We generalise Example \ref{ex:augmentation-ideal}.
	Let $p$ be a prime and let $G$ be a finite group.
	Let $\Delta_p G$ be the kernel of the natural augmentation map
	$\aug_p: \Z_p[G] \rightarrow \Z_p$ that sends each $g \in G$ to $1$.
	As $|G|$ belongs to $\mathcal{H}_p(G)$ we have by
	Theorem \ref{thm:fitt-ann} that
	\[
		|G| \cdot \Fitt_{\Z_p[G]}^{\max}(\Delta_p G) \subseteq
		\Ann_{\Z_p[G]}(\Delta_p G) = N_G \Z_p,
	\]
	where as before $N_G := \sum_{g \in G} g$. It follows that
	\[
		\Fitt_{\Z_p[G]}^{\max}(\Delta_p G) = m \frac{1}{|G|} N_G \Z_p
	\]
	for some $m \in \Z_p$. Let $h$ be a finite presentation of 
	$\Delta_p G$ such that $\Fitt_{\Z_p[G]}(h) = 
	\Fitt_{\Z_p[G]}^{\max}(\Delta_p G)$. Let $\aug_p(h)$ be the matrix
	obtained from $h$ by applying $\aug_p$ to each of the entries of $h$.
	Then $\aug_p(h)$ is a finite presentation of the $\Z_p$-module
	\[
		\Z_p \otimes_{\Z_p[G]} \Delta_p G \simeq
		\Delta_p G / (\Delta_p G)^2 \simeq \Z_p \otimes_{\Z} G/G',
	\]
	where $G'$ denotes the commutator subgroup of $G$. It follows
	from Example \ref{ex:Fitting-Z} that
	\[
	m \Z_p = \Fitt_{\Z_p}(\aug_p(h)) =  
	\Fitt_{\Z_p}(\Z_p \otimes_{\Z_p[G]} \Delta_p G) = |G/G'| \Z_p.
	\]
	This implies that we may choose $m = |G/G'|$ and thus
	\[
	\Fitt_{\Z_p[G]}^{\max}(\Delta_p G) = \frac{1}{|G'|} N_G \Z_p.
	\]
	As $N_{G'} := \sum_{g \in G'} g$ belongs to $\mathcal{H}_p(G)$
	by \cite[Corollary 6.14]{MR3092262} we find that
	\[
		\mathcal{H}_p(G) \cdot \Fitt_{\Z_p[G]}^{\max}(\Delta_p G)
		= N_G \Z_p = \Ann_{\Z_p[G]}(\Delta_p G).
	\]
	Let $\mathfrak{M}_p(G)$ be a maximal order containing $\Z_p[G]$.
	One can likewise show that
	\[
		\Fitt_{\mathfrak M_p(G)}^{\max}(\mathfrak{M}_p(G) 
		\otimes_{\Z_p[G]} \Delta_p G)
		= \frac{1}{|G'|} N_G \Z_p.
	\]
	Then Proposition \ref{prop:ann-over-max} implies
	the weaker result
	\[
		\mathcal{F}(\Z_p[G]) \cdot 
		\Fitt_{\mathfrak M_p(G)}^{\max}(\mathfrak{M}_p(G) 
		\otimes_{\Z_p[G]} \Delta_p G) = 
		\frac{|G|}{|G'|} N_G \Z_p
		\subseteq \Ann_{\Z_p[G]}(\Delta_p G).
	\]
\end{example}

\section{$p$-adic group rings}

In this section we fix a prime $p$ and a finite group $G$.
The $p$-adic group ring $\Z_p[G]$ is a Fitting order over $\Z_p$
which is of particular interest in number theory.
We put $\Lambda := \Z_p[G]$ and $A := \Q_p[G]$.

For any $\Lambda$-module $M$ we write $M^{\vee}$ for its
Pontryagin dual $\Hom_{\Z_p}(M, \Q_p/\Z_p)$ and $M^{\ast}$ for the
linear dual $\Hom_{\Z_p}(M, \Z_p)$, each endowed with the natural
contragredient action of $G$. We denote by
$^{\sharp}: A \rightarrow A$ the anti-involution
which maps each $g \in G$ to its inverse. 
If $h \in M_{a \times b}(A)$ is a matrix we let $h^{\sharp}$
be the matrix obtained from $h$ by applying $^{\sharp}$ to each of its
entries. Moreover, we let $h^T \in M_{b \times a}(A)$ be the
transpose of $h$. We note that there is an isomorphism 
$\Lambda^{\ast} \simeq \Lambda$, $f \mapsto \sum_{g \in G} f(g)g$.
Under this identification the $\Z_p$-dual of a map
$h \in M_{a \times b}(\Lambda)$ identifies with 
$h^{T, \sharp} \in M_{b \times a}(\Lambda)$.

Now let $C$ be a finite $\Lambda$-module of projective dimension 
at most $1$. Choose $n \in \N$ and a surjective map
$\Lambda^n \twoheadrightarrow C$ with kernel $P$. Note that $P$
is projective. As $C$ is finite, we have an isomorphism
$\Q_p \otimes_{\Z_p} P \simeq A^n$ of $A$-modules. Now
Swan's theorem \cite[Theorem (32.1)]{MR632548} 
implies that in fact $P \simeq \Lambda^n$.
In particular, we find that $C$ has a quadratic presentation 
\begin{equation} \label{eqn:quadratic-presentation}
	0 \longrightarrow \Lambda^n \stackrel{q}{\longrightarrow}
	\Lambda^n \longrightarrow C \longrightarrow 0.
\end{equation}
Moreover, the maximal Fitting invariant $\Fitt_{\Lambda}^{\max}(C)$
is generated by $\Nrd(q) \in \zeta(A)^{\times}$ by
Proposition \ref{prop:Fitt-of-quadratic}.

We also note that a $\Lambda$-module is of projective dimension
at most $1$ if and only if it is a cohomologically trivial
$\Lambda$-module by \cite[Theorem 9]{MR0219512}.
The following result is very useful in computing
Fitting invariants over $p$-adic group rings. 

\begin{proposition} \label{prop:sequence-group-rings}
	Let $\Lambda := \Z_p[G]$ where $p$ is a prime
	and $G$ is a finite group.
	\begin{enumerate}
		\item 
		Let $C$ be a finite $\Lambda$-module of projective dimension
		at most $1$. Let $c \in \zeta(A)^{\times}$ be a generator
		of $\Fitt_{\Lambda}^{\max}(C)$. Then the Pontryagin dual
		$C^{\vee}$ is also a finite $\Lambda$-module of projective
		dimension at most $1$ and $\Fitt_{\Lambda}^{\max}(C^{\vee})$
		is generated by $c^{\sharp}$.
		\item
		Suppose we are given an exact sequence of finite $\Lambda$-modules
		\[
			0 \longrightarrow M \longrightarrow C \longrightarrow C'
			\longrightarrow M' \longrightarrow 0,
		\]
		where $C$ and $C'$ are of projective dimension at most $1$. 
		Then we have an equality
		\[
			\Fitt_{\Lambda}^{\max}(M^{\vee})^{\sharp} \cdot 
			\Fitt_{\Lambda}^{\max}(C') = 
			\Fitt_{\Lambda}^{\max}(M') \cdot
			\Fitt_{\Lambda}^{\max}(C).
		\]
	\end{enumerate}
\end{proposition}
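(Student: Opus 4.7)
The plan is to prove the two parts sequentially, using (i) as a tool for (ii).

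For (i), I start with the quadratic presentation $0 \to \Lambda^n \xrightarrow{q} \Lambda^n \to C \to 0$ provided by \eqref{eqn:quadratic-presentation}. Applying $\Hom_{\Z_p}(-, \Z_p)$ and using that $C$ is finite (so $\Hom_{\Z_p}(C, \Z_p) = 0$ and $\Ext^1_{\Z_p}(C, \Z_p) \cong C^\vee$, derived from the injective resolution $0 \to \Z_p \to \Q_p \to \Q_p/\Z_p \to 0$), combined with the identification $\Lambda^{\ast} \cong \Lambda$ recalled just before the statement (under which the linear dual of a matrix $h$ becomes $h^{T,\sharp}$), yields the quadratic presentation
\[
0 \to \Lambda^n \xrightarrow{q^{T,\sharp}} \Lambda^n \to C^\vee \to 0.
\]
Hence $C^\vee$ has projective dimension at most one, and by Proposition \ref{prop:Fitt-of-quadratic} its maximal Fitting invariant is generated by $\Nrd(q^{T,\sharp})$. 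To finish I would verify $\Nrd(q^{T,\sharp}) = \Nrd(q)^\sharp = c^\sharp$; this follows because $h \mapsto h^{T,\sharp}$ is an anti-involution of $M_{n \times n}(A)$ whose restriction to $\zeta(A)$ agrees with $\sharp$, so that after extending scalars to a splitting field of $A$ the reduced norm becomes a product of ordinary determinants and the identity reduces to $\det(h^T) = \det(h)$ combined with the entrywise action of $\sharp$ on the centre.

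For (ii), I would first choose quadratic presentations $P_\bullet:\ 0 \to \Lambda^n \xrightarrow{q_C} \Lambda^n \xrightarrow{p_C} C \to 0$ and $Q_\bullet:\ 0 \to \Lambda^n \xrightarrow{q_{C'}} \Lambda^n \xrightarrow{p_{C'}} C' \to 0$ of the same rank $n$ (stabilising by adding identity summands if necessary). Projectivity of $P_0$ lets me lift $\phi \circ p_C$ to a map $\beta \in M_{n \times n}(\Lambda)$ with $p_{C'}\beta = \phi p_C$, and the relation $p_{C'}\beta q_C = 0$ together with the injectivity of $q_{C'}$ then forces the existence of $\alpha \in M_{n \times n}(\Lambda)$ with $q_{C'}\alpha = \beta q_C$. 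The mapping cone of the resulting chain map is a three-term complex
\[
\Lambda^n \xrightarrow{u} \Lambda^{2n} \xrightarrow{v} \Lambda^n
\]
of finitely generated projective $\Lambda$-modules, with $u = \bigl(\begin{smallmatrix} -q_C \\ \alpha \end{smallmatrix}\bigr)$ and $v = (\beta,\, q_{C'})$, whose cohomology is $H_1 = M$ and $H_0 = M'$ by the long exact sequence of the cone.

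The differential $v$ provides a finite presentation $\Lambda^{2n} \xrightarrow{v} \Lambda^n \to M' \to 0$, and applying the dualisation procedure of (i) entrywise to $u$ and $v$ yields a corresponding presentation of $M^\vee$ in terms of $q_C^{T,\sharp}$, $q_{C'}^{T,\sharp}$, $\alpha^{T,\sharp}$, and $\beta^{T,\sharp}$. The chain-map relation $\beta q_C = q_{C'}\alpha$ makes the block triangular matrix $\bigl(\begin{smallmatrix} q_C & 0 \\ -\alpha & q_{C'} \end{smallmatrix}\bigr)$ have reduced norm $cc'$ (since reduced norm of block triangular matrices is the product of the diagonal blocks' reduced norms), and analogously its $\sharp$-twisted counterpart has reduced norm $(cc')^\sharp$; these factorisations are what link the two sides of the claimed identity, with part (i) absorbing the $\sharp$.

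The main obstacle is to establish the claim as an equality of $\mathcal{I}(\Lambda)$-ideals, not merely an inclusion, on full generating sets: in general neither $M$ nor $M'$ need have projective dimension at most one, so neither $\Fitt^{\max}_{\Lambda}(M^\vee)$ nor $\Fitt^{\max}_{\Lambda}(M')$ is a principal ideal, and the comparison must be checked on reduced norms of every $n \times n$ minor of the presentations produced by the cone. A conceptually cleaner alternative, in the spirit of \cite{Kataoka}, is to descend the relation $[C] - [C'] = [M] - [M']$ from the Grothendieck group of perfect complexes acyclic over $A$ to an equality in $\zeta(A)^\times$ modulo $\mathcal{I}(\Lambda)^\times$ via reduced norms; the replacement of $M$ by $M^\vee$ in the statement then appears precisely as the transport of the reduced norm under the anti-involution $h \mapsto h^{T,\sharp}$ that was identified in (i).
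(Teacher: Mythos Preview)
Your proof of (i) is correct and essentially identical to the paper's: you use the same identification $C^{\vee}\simeq\Ext^{1}_{\Z_p}(C,\Z_p)$ via the injective resolution of $\Z_p$, dualise the quadratic presentation to $q^{T,\sharp}$, and conclude by $\Nrd(q^{T,\sharp})=c^{\sharp}$. You even supply the justification for this last identity, which the paper simply asserts.

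For (ii) there is nothing to compare: the paper does not reprove it here but only cites \cite[Proposition~5.3]{MR2609173}. Your mapping-cone outline is the right starting point and does produce explicit presentations of $M'$ and of $M^{\vee}$ linked by the relation $\beta q_C=q_{C'}\alpha$, but the gap you flag is genuine and is not closed by the $K$-theoretic alternative you sketch. The difficulty is that for a module without a quadratic presentation the maximal Fitting invariant is \emph{not} determined by a single class in $\zeta(A)^{\times}/\mathcal{I}(\Lambda)^{\times}$, so the relation $[C]-[C']=[M]-[M']$ in a relative $K$-group cannot by itself yield an equality of the (generally non-principal) ideals $\Fitt_{\Lambda}^{\max}(M')$ and $\Fitt_{\Lambda}^{\max}(M^{\vee})^{\sharp}$. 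What one actually needs is a two-sided comparison at the level of \emph{all} finite presentations: given any presentation $h'$ of $M'$, build from it (via the cone and dualisation) a presentation of $M^{\vee}$ whose Fitting ideal, after applying~$\sharp$ and multiplying by $c'$, contains $c\cdot\Fitt_{\Lambda}(h')$; and conversely. Only this symmetric argument upgrades the obvious inclusion to the claimed equality.
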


\begin{proof}
	This follows from \cite[Proposition 5.3]{MR2609173}.
	Here we will give a new proof of (i) which is much
	shorter and easier than the original proof. The argument
	is inspired by \cite[Lemma 6]{MR2046598} and
	recent work of Kataoka \cite[\S 4]{Kataoka}.
	
	As $C$ is finite, we have 
	$\Hom_{\Z_p}(C, \Z_p) = \Hom_{\Z_p}(C, \Q_p) = 0$.
	As $\Q_p$ is an injective $\Z_p$-module, we have
	$\Ext^1_{\Z_p}(C, \Q_p) = 0$ and thus
	the short exact sequence 
	$\Z_p \hookrightarrow \Q_p \twoheadrightarrow \Q_p / \Z_p$
	induces	an isomorphism
	\[
		C^{\vee} \simeq \Ext_{\Z_p}^1(C, \Z_p).
	\]
	Now choose a quadratic presentation $q$ of $C$ as in 
	\eqref{eqn:quadratic-presentation}. 
	We may assume that $c = \Nrd(q)$.
	Note that $\Ext_{\Z_p}^1(\Lambda, \Z_p)$ vanishes,
	since $\Lambda$ is a projective $\Z_p$-module.
	We apply $\Z_p$-duals to \eqref{eqn:quadratic-presentation}
	and obtain an exact sequence
	\[
		0 \longrightarrow \Lambda^n \xrightarrow{q^{T,\sharp}}
		\Lambda^n \longrightarrow \Ext_{\Z_p}^1(C, \Z_p) \longrightarrow 0.
	\]
	As $\Nrd(q^{T,\sharp}) = c^{\sharp}$ we are done.
\end{proof}

\begin{remark}
	Note that exact sequences of the type considered in Proposition
	\ref{prop:sequence-group-rings} naturally occur in
	the context of the equivariant Tamagawa number conjecture
	as formulated by Burns and Flach \cite{MR1884523}.
	This conjecture refines and generalises a very wide range of well
	known results and conjectures relating special values of
	$L$-functions to certain natural arithmetic invariants.
	It thereby vastly generalises the analytic class number formula 
	for number fields and the Birch and Swinnerton-Dyer 
	conjecture for elliptic	curves (see \cite{MR2088713} for a survey).
\end{remark}

\begin{remark}
	There is also an analogue of Proposition \ref{prop:sequence-group-rings}
	for Iwasawa modules \cite[Proposition 6.3]{MR2609173}
	and even for more general Fitting orders \cite[\S 4]{Kataoka}.
	This has applications in the context of main conjectures
	of equivariant Iwasawa theory.
\end{remark}

\section{Additivity of Fitting invariants} \label{sec:additivity}

Let $\Lambda$ be a Fitting order over the Fitting domain $\mathfrak o$.
Let $M$ and $N$ be two finitely generated $\Lambda$-modules.
As already observed in Lemma \ref{lem:basic-props-noncomm}(ii), 
one always has an inclusion
\[
	\Fitt_{\Lambda}^{\max}(M) \cdot \Fitt_{\Lambda}^{\max}(N)
	\subseteq \Fitt_{\Lambda}^{\max}(M \oplus N).
\]

\begin{definition}
	The Fitting order $\Lambda$ is called \emph{Fitting-additive}
	if
	\[
	\Fitt_{\Lambda}^{\max}(M) \cdot \Fitt_{\Lambda}^{\max}(N)
	= \Fitt_{\Lambda}^{\max}(M \oplus N)
	\]
	for all finitely generated $\Lambda$-modules $M$ and $N$.
\end{definition}

The following observation is clear by Lemma \ref{lem:basic-props-comm}(ii).

\begin{proposition}
	Every commutative Fitting order $\Lambda$ is Fitting-additive.
\end{proposition}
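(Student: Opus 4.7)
The plan is to show that, for a commutative Fitting order $\Lambda$, the maximal Fitting invariant $\Fitt_{\Lambda}^{\max}(M)$ defined noncommutatively in the paper coincides exactly with the classical commutative Fitting ideal $\Fitt_{\Lambda}(M)$, and then to appeal to Lemma \ref{lem:basic-props-comm}(ii).

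First I would unpack the relevant invariants under the commutativity assumption. Since $\Lambda$ is commutative, we have $\zeta(\Lambda) = \Lambda$, and for every square matrix $H \in M_{b \times b}(\Lambda)$ the reduced norm $\Nrd(H)$ agrees with the usual determinant $\det(H)$. In particular, $\Nrd(H)$ already lies in $\Lambda$, so the integrality ring satisfies
\[
\mathcal{I}(\Lambda) = \langle \Nrd(H) \mid H \in M_{b \times b}(\Lambda), b \in \N \rangle_{\zeta(\Lambda)} = \Lambda.
\]

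Next, for any finite presentation $h : \Lambda^a \to \Lambda^b$ of a finitely generated $\Lambda$-module $M$, the noncommutative Fitting invariant of $h$ reduces by definition to
\[
\Fitt_{\Lambda}(h) = \langle \Nrd(H) \mid H \in S_b(h) \rangle_{\mathcal{I}(\Lambda)} = \langle \det(H) \mid H \in S_b(h) \rangle_{\Lambda},
\]
which is precisely the classical Fitting ideal computed from $h$ (with the analogous vanishing when $a<b$). By Theorem \ref{thm:Fitt-well-comm} this ideal is independent of the chosen presentation, so every admissible $h$ yields the same invariant. It follows that $\Fitt_{\Lambda}^{\max}(M) = \Fitt_{\Lambda}(h) = \Fitt_{\Lambda}(M)$ for any $h$.

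Finally, given two finitely generated $\Lambda$-modules $M$ and $N$, applying Lemma \ref{lem:basic-props-comm}(ii) to the classical Fitting ideals gives
\[
\Fitt_{\Lambda}^{\max}(M \oplus N) = \Fitt_{\Lambda}(M \oplus N) = \Fitt_{\Lambda}(M) \cdot \Fitt_{\Lambda}(N) = \Fitt_{\Lambda}^{\max}(M) \cdot \Fitt_{\Lambda}^{\max}(N),
\]
which is exactly the Fitting-additivity of $\Lambda$. There is no real obstacle here: the content of the proposition is simply that the noncommutative definition specialises to the classical one in the commutative case, after which the result is immediate from the already-established commutative theory.
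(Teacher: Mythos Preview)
Your proof is correct and follows exactly the approach the paper intends: the paper simply declares the proposition ``clear by Lemma~\ref{lem:basic-props-comm}(ii)'', and you have spelled out the (implicit) reason this is clear, namely that in the commutative case the reduced norm is the determinant, $\mathcal{I}(\Lambda)=\Lambda$, and hence $\Fitt_{\Lambda}^{\max}(M)$ coincides with the classical Fitting ideal.
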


As reduced norms are defined componentwise, the following is also
immediate.

\begin{lemma} \label{lem:add-add}
	Let $\Lambda_1$ and $\Lambda_2$ be Fitting orders over the Fitting domain
	$\mathfrak o$. Then $\Lambda_1 \oplus \Lambda_2$ is Fitting-additive
	if and only if both $\Lambda_1$ and $\Lambda_2$ are Fitting-additive.
\end{lemma}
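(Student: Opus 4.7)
The plan is to exploit the central idempotent decomposition of $\Lambda := \Lambda_1 \oplus \Lambda_2$ and show that \emph{everything} involved in the definition of the maximal Fitting invariant splits componentwise. Let $e_i \in \zeta(\Lambda)$ be the central idempotent cutting out $\Lambda_i$, so that $e_1 + e_2 = 1$ and $e_1 e_2 = 0$.

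First I would observe that any finitely generated $\Lambda$-module $M$ decomposes canonically as $M = e_1 M \oplus e_2 M$, where $e_i M$ is naturally a finitely generated $\Lambda_i$-module. Any finite presentation $\Lambda^a \xrightarrow{h} \Lambda^b \to M \to 0$ likewise splits: applying the exact functor $e_i \cdot$ produces finite presentations $\Lambda_i^a \xrightarrow{h_i} \Lambda_i^b \to e_i M \to 0$, where $h_i := e_i h$. Conversely, given finite presentations $h_1, h_2$ of $e_1 M, e_2 M$ respectively, the block-diagonal map $h_1 \oplus h_2$ is a finite presentation of $M$.

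Next I would verify the compatibility of reduced norms and integrality rings: since reduced norms are defined componentwise on the Wedderburn decomposition, for any $H \in M_{b\times b}(\Lambda)$ we have $\Nrd(H) = \Nrd(e_1 H) + \Nrd(e_2 H) \in \zeta(A_1) \oplus \zeta(A_2)$. From this it follows readily that
\[
\mathcal{I}(\Lambda) = \mathcal{I}(\Lambda_1) \oplus \mathcal{I}(\Lambda_2),
\]
and the obvious analogue for $b \times b$ submatrices yields $\Fitt_\Lambda(h) = \Fitt_{\Lambda_1}(h_1) \oplus \Fitt_{\Lambda_2}(h_2)$ as ideals of $\mathcal{I}(\Lambda)$. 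Passing to a presentation that attains the maximum (this uses the noetherian argument already given before Theorem \ref{thm:max-Fitting}, applied simultaneously in both components, and the fact that for a block-diagonal presentation the Fitting ideal is a direct sum), one obtains
\[
\Fitt^{\max}_{\Lambda}(M) = \Fitt^{\max}_{\Lambda_1}(e_1 M) \oplus \Fitt^{\max}_{\Lambda_2}(e_2 M).
\]

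With this splitting established, the lemma follows from the componentwise multiplication in $\mathcal{I}(\Lambda_1) \oplus \mathcal{I}(\Lambda_2)$. For the ``if'' direction, given finitely generated $\Lambda$-modules $M, N$ with decompositions $M = M_1 \oplus M_2$ and $N = N_1 \oplus N_2$, Fitting-additivity of each $\Lambda_i$ gives $\Fitt^{\max}_{\Lambda_i}(M_i \oplus N_i) = \Fitt^{\max}_{\Lambda_i}(M_i) \cdot \Fitt^{\max}_{\Lambda_i}(N_i)$, and assembling the two components yields Fitting-additivity of $\Lambda$. For the ``only if'' direction, given two finitely generated $\Lambda_1$-modules $M_1, N_1$, view them as $\Lambda$-modules by letting $e_2$ act trivially; then $e_2 M_1 = e_2 N_1 = 0$, so the splitting formula reduces the additivity identity for $\Lambda$ to the one for $\Lambda_1$, and symmetrically for $\Lambda_2$. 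The only step that requires any care is the claim that a maximal presentation of $M$ is obtained from maximal presentations of $e_1 M$ and $e_2 M$, but this follows immediately from the block-diagonal construction together with the containment $\Fitt_\Lambda(h_1 \oplus h_2) \supseteq \Fitt_\Lambda(h_1' \oplus h_2')$ whenever $\Fitt_{\Lambda_i}(h_i) \supseteq \Fitt_{\Lambda_i}(h_i')$ for $i=1,2$.
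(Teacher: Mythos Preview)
Your argument is correct and follows exactly the reasoning the paper has in mind: the paper does not give a formal proof but simply remarks, immediately before the lemma, that it is ``immediate'' because ``reduced norms are defined componentwise.'' Your write-up just unpacks this one-line justification in detail, establishing the splitting $\Fitt^{\max}_{\Lambda}(M) = \Fitt^{\max}_{\Lambda_1}(e_1 M) \oplus \Fitt^{\max}_{\Lambda_2}(e_2 M)$ and then reading off both directions; the only minor point you leave implicit is that when combining presentations $h_1$ and $h_2$ of possibly different shapes one first pads them to a common size (adding zero relations and, if needed, extra generators killed by an identity block), which does not change the Fitting invariants.
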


We record some cases where it is known that $\Lambda$ is Fitting-additive.

\begin{theorem} \label{thm:Fitting-additive}
	The Fitting order $\Lambda$ is Fitting-additive in each of the 
	following cases.
	\begin{enumerate}
		\item 
		$\Lambda$ is a direct product of matrix rings over commutative rings.
		\item
		$\Lambda$ is a maximal order and $\mathfrak o$ is
		a complete discrete valuation ring.
	\end{enumerate}
\end{theorem}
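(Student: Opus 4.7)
The plan is to reduce both cases via Lemma~\ref{lem:add-add} to a single simple component, and then to exhibit, for the modules $M$ and $N$ that actually contribute, a single block-diagonal quadratic presentation of $M \oplus N$ whose reduced norm factors as the product of the reduced norms of the diagonal blocks.

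For (i), Lemma~\ref{lem:add-add} reduces to the case $\Lambda = M_{n \times n}(R)$ for a commutative Fitting order $R$. The Morita-equivalence formalism developed in the Appendix supplies a direct-sum-preserving bijection between isomorphism classes of finitely generated $\Lambda$-modules and finitely generated $R$-modules, and it identifies $\Fitt_\Lambda^{\max}(M)$ with the commutative Fitting ideal $\Fitt_R(e_{11} M)$ inside $\mathcal{I}(\Lambda) = \zeta(\Lambda) = R$. The additivity of commutative Fitting ideals in Lemma~\ref{lem:basic-props-comm}(ii) then delivers~(i).

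For (ii), Wedderburn together with Lemma~\ref{lem:add-add} reduces to the case where $\Lambda$ is a maximal order in a simple $F$-algebra $A_i \cong M_{n_i}(D_i)$, so that $\Lambda \cong M_{n_i \times n_i}(\Delta)$ with $\Delta$ the unique maximal order in the skew field $D_i$. The crucial structural input is that $\Delta$ is a noncommutative complete discrete valuation ring: its Jacobson radical is a principal two-sided ideal $J = \Delta \pi_\Delta = \pi_\Delta \Delta$, and every finitely generated $\Delta$-module is isomorphic to $\Delta^r \oplus \bigoplus_{j} \Delta/J^{n_j}$ for unique $r \geq 0$ and $n_j \geq 1$; see \cite[\S 17, 18]{MR1972204}. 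Transporting this through the Morita equivalence between $\Lambda$ and $\Delta$ gives an analogous decomposition of any finitely generated $\Lambda$-module.

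If $M$ is not $\mathfrak o$-torsion, i.e.~$A_i \otimes_\Lambda M \neq 0$, then a rank count shows that in any finite presentation $\Lambda^a \stackrel{h}{\to} \Lambda^b \to M$ every $b \times b$ submatrix of $h$ has reduced norm zero in the unique Wedderburn component $A_i$, forcing $\Fitt_\Lambda^{\max}(M) = 0$; the same then holds for $M \oplus N$, and the identity is trivial. Otherwise $M$ and $N$ are both finite and, by the structure theorem together with Morita equivalence, admit diagonal quadratic presentations $h_M$ and $h_N$ assembled from cyclic blocks of the form $\Delta \xrightarrow{\pi_\Delta^{n_j}} \Delta$. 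The block-diagonal matrix $h := \mathrm{diag}(h_M, h_N)$ is a quadratic presentation of $M \oplus N$ and satisfies $\Nrd(h) = \Nrd(h_M) \cdot \Nrd(h_N)$, so Proposition~\ref{prop:Fitt-of-quadratic} yields
\[
\Fitt_\Lambda^{\max}(M \oplus N) = \langle \Nrd(h) \rangle_{\mathcal{I}(\Lambda)} = \Fitt_\Lambda^{\max}(M) \cdot \Fitt_\Lambda^{\max}(N).
\]
The principal obstacles are the Morita-equivalence compatibility of noncommutative Fitting invariants (as established in the Appendix) needed in~(i), and the noncommutative structure theorem over a complete discrete valuation ring needed in~(ii); both are classical, but they carry the essential weight of the argument.
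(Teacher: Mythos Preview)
Your argument is correct. For part~(i) you take exactly the route the paper indicates: reduce via Lemma~\ref{lem:add-add} to a single matrix ring and then invoke the Morita compatibility of Remark~\ref{rmk:compatible-Fitt-defs} together with Lemma~\ref{lem:basic-props-Morita}(ii), which is precisely how the paper reproves~(i) in the appendix.

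For part~(ii) the paper simply cites \cite[Theorem~4.6(ii)]{MR3092262}, whereas you supply an explicit proof. Your argument is essentially the standard one behind that citation: reduce to a maximal order $\Lambda \simeq M_{n_i}(\Delta)$ in a single simple component, observe that $\Fitt_\Lambda^{\max}$ vanishes on any non-torsion module, and for torsion modules use that every $\Lambda$-lattice is free (a consequence of $\Delta$ being a noncommutative complete DVR) to produce quadratic presentations $h_M$, $h_N$, whence Proposition~\ref{prop:Fitt-of-quadratic} and multiplicativity of the reduced norm on block-diagonal matrices finish the job. One small expositional point: your ``cyclic blocks $\Delta \xrightarrow{\pi_\Delta^{n_j}} \Delta$'' are $\Delta$-presentations, and strictly speaking Proposition~\ref{prop:Fitt-of-quadratic} requires $\Lambda$-presentations; you should either note that these transport to $\Lambda$-presentations $\Lambda \xrightarrow{\pi_\Delta^{n_j} \cdot \boldsymbol{1}_{n_i}} \Lambda$ under Morita, or bypass the structure theorem entirely by observing directly that any torsion $\Lambda$-module has a quadratic $\Lambda$-presentation because the kernel of any surjection $\Lambda^b \twoheadrightarrow M$ is a $\Lambda$-lattice of the same $A_i$-rank and hence free of rank~$b$. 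Either way the argument goes through, and the latter formulation is in fact cleaner since the diagonal form plays no role beyond furnishing \emph{some} quadratic presentation.
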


\begin{proof}
	This follows from \cite[Theorem 4.6(ii)]{MR3092262}.
	We will reprove part (i) in the appendix 
	(see Remark \ref{rmk:compatible-Fitt-defs} 
	and Lemma \ref{lem:basic-props-Morita}(ii)).
\end{proof}

\begin{corollary} \label{cor:p-adic-additive}
	Let $p$ be a prime and let $G$ be a finite group. Suppose that
	$p$ does not divide the order of the commutator subgroup of $G$.
	Then the $p$-adic group ring $\Z_p[G]$ is Fitting-additive.
\end{corollary}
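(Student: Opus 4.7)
The plan is to reduce this statement to Theorem \ref{thm:Fitting-additive}(i). The key structural input is the following fact: under the hypothesis $p \nmid |G'|$, the group ring $\Z_p[G]$ is isomorphic, as a $\Z_p$-order, to a direct product $\bigoplus_i M_{n_i}(R_i)$ of matrix rings over commutative $\Z_p$-orders $R_i$. Once this is granted, Theorem \ref{thm:Fitting-additive}(i) applies directly and yields Fitting-additivity of $\Z_p[G]$.

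To establish the structural decomposition, the first step I would take exploits the element $e_{G'} := \frac{1}{|G'|} \sum_{g \in G'} g$, which is a central idempotent of $\Q_p[G]$ and which belongs to $\Z_p[G]$ precisely because $p \nmid |G'|$. This produces a ring decomposition $\Z_p[G] = e_{G'}\Z_p[G] \oplus (1-e_{G'})\Z_p[G]$, in which the first factor is isomorphic to $\Z_p[G/G']$ and is therefore commutative (hence trivially a matrix ring of size $1$ over a commutative ring). For the complementary factor $(1-e_{G'})\Z_p[G]$, one invokes Clifford theory together with the classification of $p$-adic division algebras: under the hypothesis $p \nmid |G'|$, all relevant Schur indices of $\Q_p[G]$ are equal to $1$, and moreover enough primitive central idempotents of $\Q_p[G]$ (suitably normalized) lie in $\Z_p[G]$ to yield the desired block decomposition into matrix rings $M_{n_i}(R_i)$ with each $R_i$ commutative. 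For instance, in the case $G = S_3$ and $p = 2$, a direct computation shows that $(1 - e_{G'})\Z_2[S_3] \simeq M_2(\Z_2)$, so that $\Z_2[S_3] \simeq \Z_2[C_2] \oplus M_2(\Z_2)$, confirming the pattern.

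With the decomposition in hand, the final step is just the invocation of Theorem \ref{thm:Fitting-additive}(i), which asserts that any direct product of matrix rings over commutative rings is Fitting-additive. Alternatively, one may first use Lemma \ref{lem:add-add} to reduce to a single factor $M_{n_i}(R_i)$ before applying part (i) to each summand separately.

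The main obstacle is justifying the structural decomposition of the non-abelian complement $(1-e_{G'})\Z_p[G]$ in full generality. While the abelian part is immediate from the integrality of $e_{G'}$, the passage to a block decomposition into matrix rings over commutative $\Z_p$-orders requires nontrivial representation-theoretic input, essentially because one must simultaneously control Schur indices and verify that the appropriate maximal orders in the simple components of $\Q_p[G]$ are reached by the integral group ring. It is perhaps reassuring that this structural fact is consistent with Proposition \ref{prop:best-denominators}: indeed, the equalities $\mathcal{I}_p(G) = \mathcal{H}_p(G) = \zeta(\Z_p[G])$ are exactly what one expects for a direct product of matrix rings over commutative rings.
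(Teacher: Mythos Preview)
Your proposal is correct and follows essentially the same approach as the paper: reduce to Theorem~\ref{thm:Fitting-additive}(i) via the structural fact that $\Z_p[G]$ is a direct product of matrix rings over commutative rings when $p \nmid |G'|$. The only difference is that the paper simply cites a reference (a corollary of Roquette) for this structural decomposition, whereas you attempt to sketch its proof via the idempotent $e_{G'}$ and Clifford theory; your honest acknowledgment that the non-abelian complement requires nontrivial representation-theoretic input is well placed, and in practice one would indeed cite the literature rather than reprove it.
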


\begin{proof}
	It follows from \cite[Corollary, p.~390]{MR704622}
	that $\Z_p[G]$ is a direct product of matrix rings over commutative rings
	in this case. Thus the result follows from 
	Theorem \ref{thm:Fitting-additive}(i).
\end{proof}

\begin{corollary}
	Let $G$ be a profinite group containing a finite normal subgroup $H$
	such that $G/H \simeq \Z_p$ for some prime $p$. Suppose that
	$p$ does not divide the order of the commutator subgroup of $G$
	(which is finite). Then the Iwasawa algebra 
	$\Z_p \llbracket G \rrbracket$ is Fitting-additive.
\end{corollary}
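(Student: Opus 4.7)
The plan is to reduce to Theorem \ref{thm:Fitting-additive}(i) by showing that the Iwasawa algebra $\Z_p \llbracket G \rrbracket$ is itself a direct product of matrix rings over commutative rings; Lemma \ref{lem:add-add} then finishes. Following Example \ref{ex:Iwasawa-algebra}, I would first fix $n$ such that $\Gamma^{p^n}$ is central in $G$, put $\mathfrak o := \Z_p \llbracket \Gamma^{p^n} \rrbracket$, $\bar G := G/\Gamma^{p^n}$, and $T := \gamma^{p^n} - 1 \in \mathfrak o$. Then $\bar G$ is finite, $\Z_p \llbracket G \rrbracket$ is a free $\mathfrak o$-module of rank $|\bar G|$, and reduction modulo $T$ gives $\Z_p \llbracket G \rrbracket / T\Z_p \llbracket G \rrbracket \cong \Z_p[\bar G]$.

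Next, I would verify the hypothesis of Corollary \ref{cor:p-adic-additive} for $\bar G$. The commutator $G'$ is contained in $H$ (because $G/H \cong \Gamma$ is abelian), hence is finite, and by hypothesis $p \nmid |G'|$. Since $\Gamma^{p^n}$ is a pro-$p$ group, we have $G' \cap \Gamma^{p^n} = 1$, whence $|\bar G'| = |G'|$ is coprime to $p$. Corollary \ref{cor:p-adic-additive} applied to $\bar G$ (via the structural result of \cite{MR704622}) then yields a decomposition $\Z_p[\bar G] \cong \prod_i M_{n_i}(R_i)$ with each $R_i$ a commutative $\Z_p$-algebra.

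The key step is to transfer this block decomposition to $\Z_p \llbracket G \rrbracket$. Being a finitely generated module over the noetherian complete local ring $\mathfrak o$, $\Z_p \llbracket G \rrbracket$ is semiperfect and $T$-adically complete, so the standard theory of idempotent lifting across an ideal in the Jacobson radical produces a complete system of central orthogonal idempotents in $\Z_p \llbracket G \rrbracket$ reducing to the primitive central idempotents of $\Z_p[\bar G]$; lifting matrix units further gives $\Z_p \llbracket G \rrbracket \cong \prod_i M_{n_i}(S_i)$ for $\mathfrak o$-algebras $S_i$ satisfying $S_i / TS_i \cong R_i$. To conclude that each $S_i$ is commutative I would invoke the Iwasawa-algebraic analogue of the input to Corollary \ref{cor:p-adic-additive}: under $p \nmid |G'|$, the algebra $\mathcal Q(G) = F \otimes_{\mathfrak o} \Z_p \llbracket G \rrbracket$ decomposes as a product of matrix rings over commutative field extensions of $F = \Quot(\mathfrak o)$, and $\Z_p \llbracket G \rrbracket$ is a maximal $\mathfrak o$-order therein; this forces each $S_i$ to be the commutative maximal $\mathfrak o$-order in the centre of its block. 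Theorem \ref{thm:Fitting-additive}(i) and Lemma \ref{lem:add-add} then complete the proof.

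The main obstacle is the commutativity of the $S_i$: reduction modulo $T$ only bounds commutators inside $TS_i$, and collapsing them to zero requires the Iwasawa-algebraic strengthening of the finite-group structural result, either through Schur-index information for the Wedderburn components of $\mathcal Q(G)$ or through the maximality of $\Z_p \llbracket G \rrbracket$ as an $\mathfrak o$-order.
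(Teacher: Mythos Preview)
Your overall strategy matches the paper exactly: show that $\Z_p \llbracket G \rrbracket$ is a direct product of matrix rings over commutative rings and then invoke Theorem~\ref{thm:Fitting-additive}(i). The paper simply cites \cite[Proposition~4.5]{MR3092262} for this structural fact rather than reproving it, so your outline is essentially a sketch of that proposition.

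One of your two proposed resolutions of the commutativity obstacle, however, is false: the Iwasawa algebra $\Z_p \llbracket G \rrbracket$ is \emph{not} in general a maximal $\mathfrak o$-order, even under the hypothesis $p \nmid |G'|$. For instance, take $G = \Gamma \times C_p$ with $\Gamma \cong \Z_p$ and $H = C_p$; then $G' = 1$, yet $\Z_p \llbracket G \rrbracket \cong \Z_p \llbracket \Gamma \rrbracket[C_p]$ fails to contain the central idempotent $p^{-1} N_{C_p}$ and so cannot be maximal (recall from \S\ref{subsec:reduced-norms} that a maximal order decomposes along the Wedderburn components and hence contains all central idempotents). The viable route is your other suggestion: once the Wedderburn components of $\mathcal Q(G)$ are known to be matrix rings over (commutative) fields, each $S_i$ --- being $\mathfrak o$-torsion-free --- embeds in such a field and is therefore commutative. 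That Schur-index statement for $\mathcal Q(G)$ is precisely what the cited proposition supplies.
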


\begin{proof}
	The Iwasawa algebra $\Z_p \llbracket G \rrbracket$ is again a
	direct product of matrix rings over commutative rings
	in this case by \cite[Proposition 4.5]{MR3092262}.
\end{proof}

We now give an example of a Fitting order $\Lambda$ which is 
\emph{not} Fitting-additive.

\begin{example} \label{ex:hereditary}
	Let $p$ be a prime and consider the $\Z_p$-order
	\[
		\Lambda := \left\{ \left(\begin{array}{cc}
		a & b \\ c & d
		\end{array}\right) \in M_{2 \times 2}(\Z_p) \mid b \equiv 0
		\mod p
		\right\}.
	\]
	This is a Fitting order over $\Z_p$ and one has 
	$\mathcal{H}(\Lambda) = \mathcal{I}(\Lambda) = \zeta(\Lambda) = \Z_p$.
	We let $M$ and $N$ be $\Lambda$-modules which as sets are equal to 
	$\Z_p / p \Z_p$ and upon which $\Lambda$ acts as follows.
	Let $\lambda = \left(\begin{array}{cc} a & b \\ c & d \end{array}\right)
	\in \Lambda$. For every $x \in \Z_p$ we write 
	$\overline x$ for its image in $\Z_p / p \Z_p$.
	Then $\lambda \cdot m := \overline{a} m$ and
	$\lambda \cdot n := \overline{d} n$ for $m \in M$ and $n \in N$.
	Using $\overline b = 0$ it is easily checked that this defines 
	left $\Lambda$-module structures on $M$ and $N$,
	respectively. There is
	a short exact sequence
	\[
		0 \longrightarrow \Lambda \stackrel{h}{\longrightarrow}
		\Lambda \longrightarrow M \oplus N \longrightarrow 0,
	\]
	where $h$ is right multiplication by 
	$\left(\begin{array}{cc} 0 & p \\ 1 & 0 \end{array}\right)
	\in \Lambda$. As $h$ is a quadratic presentation,
	we have that
	\[
		\Fitt_{\Lambda}^{\max}(M \oplus N) = \Nrd(h) \cdot \Z_p = p \Z_p
	\]
	by Proposition \ref{prop:Fitt-of-quadratic}. As $M \oplus N$
	surjects onto $M$, the ideal generated by $p$ is contained in 
	$\Fitt_{\Lambda}^{\max}(M)$ by Lemma \ref{lem:basic-props-noncomm}(i).
	However, the maximal Fitting invariant $\Fitt_{\Lambda}^{\max}(M)$
	annihilates $M$ by Theorem \ref{thm:fitt-ann} and so 
	$\Fitt_{\Lambda}^{\max}(M)$ is properly contained in $\Z_p$ 
	as $M \not=0$.
	It follows that
	\[
		\Fitt_{\Lambda}^{\max}(M) = p \Z_p.
	\]
	Exactly the same reasoning applies for $N$ and therefore
	\[
	\Fitt_{\Lambda}^{\max}(N) = p \Z_p.
	\]
	Altogether we have that
	\[
	\Fitt_{\Lambda}^{\max}(M)  \cdot \Fitt_{\Lambda}^{\max}(N) = p^2 \Z_p
	\subsetneq p \Z_p = \Fitt_{\Lambda}^{\max}(M \oplus N)
	\]
	and thus $\Lambda$ is not Fitting-additive.
\end{example}

The order in Example \ref{ex:hereditary} is a hereditary, but
non-maximal $\Z_p$-order.
By the classification of hereditary orders
over complete discrete valuation rings \cite[Theorem 26.28]{MR632548}
it is clear that similar examples can be constructed for every
hereditary, non-maximal order over a
complete discrete valuation ring. 
Taking Theorem \ref{thm:Fitting-additive}(ii) into account,
we have established the following.

\begin{proposition} \label{prop:hereditary-not-add}
	Let $\Lambda$ be a Fitting order over a complete discrete
	valuation ring. Suppose that $\Lambda$ is hereditary.
	Then $\Lambda$ is Fitting-additive if and only if it is
	maximal.
\end{proposition}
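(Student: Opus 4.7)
\medskip

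The ``if'' direction is immediate from Theorem~\ref{thm:Fitting-additive}(ii), so the work lies in proving the ``only if'' direction: any hereditary but non-maximal Fitting order $\Lambda$ over a complete discrete valuation ring $\mathfrak{o}$ fails to be Fitting-additive. My plan is to generalise Example~\ref{ex:hereditary} using the structure theorem for hereditary orders, as the paragraph preceding the proposition suggests. Writing $A = A_1 \oplus \cdots \oplus A_s$ as a product of simple $F$-algebras, the hereditary order $\Lambda$ decomposes correspondingly as $\Lambda = \Lambda_1 \oplus \cdots \oplus \Lambda_s$. By Lemma~\ref{lem:add-add}, $\Lambda$ is Fitting-additive iff each $\Lambda_i$ is; since $\Lambda$ is non-maximal some $\Lambda_i$ is non-maximal, so I may assume $\Lambda$ lies in a simple algebra $A = M_n(D)$ with $D$ a skewfield, with maximal $\mathfrak{o}$-order $\Delta \subset D$ and Jacobson radical $J = J(\Delta)$.

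Next I invoke Reiner's classification \cite[Theorem~26.28]{MR632548}: up to conjugation in $A$, the hereditary order $\Lambda$ has the block form determined by a composition $(m_1, \ldots, m_r)$ of $n$, with the $(i,j)$-block equal to $M_{m_i \times m_j}(\Delta)$ for $i \leq j$ and $M_{m_i \times m_j}(J)$ for $i > j$. Non-maximality means $r \geq 2$. I would then construct modules $M$ and $N$ analogous to those of Example~\ref{ex:hereditary} by choosing simple left modules for the quotients $e_1 \Lambda e_1 \cong M_{m_1}(\Delta)$ and $e_2 \Lambda e_2 \cong M_{m_2}(\Delta)$ (where $e_i$ is the diagonal block idempotent), inflated to $\Lambda$ via the natural projection $\Lambda \twoheadrightarrow e_i \Lambda e_i$ killing all off-diagonal blocks. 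The essential feature, as in the example, is that the two simple modules $M, N$ only see the corresponding diagonal block.

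The key computation is to exhibit a quadratic presentation of the direct sum $M \oplus N$ (or a suitable sum of simple modules across all $r$ blocks), where the presenting matrix is supported on just the two blocks $i = 1, 2$ and mimics $\bigl(\begin{smallmatrix} 0 & \pi \\ 1 & 0 \end{smallmatrix}\bigr)$ of Example~\ref{ex:hereditary}, with identity elsewhere. Proposition~\ref{prop:Fitt-of-quadratic} then gives $\Fitt_{\Lambda}^{\max}(M \oplus N)$ as the $\mathcal{I}(\Lambda)$-ideal generated by the reduced norm of this matrix, which is a uniformiser (times a unit). On the other hand, since $M$ and $N$ are nonzero simple modules, Theorem~\ref{thm:fitt-ann} forces $\Fitt_{\Lambda}^{\max}(M)$ and $\Fitt_{\Lambda}^{\max}(N)$ to be proper ideals, and combined with the surjections $M \oplus N \twoheadrightarrow M, N$ and Lemma~\ref{lem:basic-props-noncomm}(i) they are pinned down exactly. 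Their product is then strictly smaller than $\Fitt_{\Lambda}^{\max}(M \oplus N)$, contradicting Fitting-additivity.

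The main obstacle will be handling arbitrary block sizes $m_i$ cleanly, because for $m_i > 1$ the relevant reduced norms and the structure of the simple modules are more delicate. To sidestep this I would reduce to the case $m_1 = \cdots = m_r = 1$: the hereditary order is Morita equivalent to such a ``basic'' hereditary order, and Morita-equivalent Fitting orders have parallel Fitting-theoretic behaviour (in the spirit of the appendix). Once in the basic form, the computation is essentially identical to Example~\ref{ex:hereditary}, and the strict containment $\Fitt_{\Lambda}^{\max}(M) \cdot \Fitt_{\Lambda}^{\max}(N) \subsetneq \Fitt_{\Lambda}^{\max}(M \oplus N)$ persists, completing the proof.
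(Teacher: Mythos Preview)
Your overall strategy coincides with the paper's: the paper's entire argument is the short paragraph preceding the proposition, which invokes \cite[Theorem~26.28]{MR632548} and asserts that ``similar examples can be constructed for every hereditary, non-maximal order''. So your reduction to a single simple component via Lemma~\ref{lem:add-add}, appeal to the structure theorem, and plan to transplant Example~\ref{ex:hereditary} is exactly what the paper has in mind, and your write-up is already more detailed than the original.

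The one step that is not justified is the Morita reduction to the basic case $m_1=\cdots=m_r=1$. The appendix treats only the situation where $\Lambda$ is Morita equivalent to a \emph{commutative} ring, and nothing in the paper shows that $\Fitt_\Lambda^{\max}$, or even the property of being Fitting-additive, is preserved under a Morita equivalence between two noncommutative Fitting orders. You would have to prove this separately, and it is not a formality: the definition of $\Fitt_\Lambda^{\max}$ passes through reduced norms of free presentations, and a Morita functor carries free $\Lambda$-modules to projective (not free) $\Lambda'$-modules in general. So ``in the spirit of the appendix'' is not enough here.

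The direct route avoids this and is not as bad as you fear. For a hereditary $\Lambda$ in a simple algebra, $\zeta(\Lambda)$ is a complete discrete valuation ring and $\mathcal{H}(\Lambda)=\mathcal{I}(\Lambda)=\zeta(\Lambda)$ (cf.\ the proof of Remark~\ref{rem:hereditary-max}), so Fitting invariants are principal and one may encode them by their valuation $v(-)$. The annihilation argument gives $v(X)\geq 1$ for every nonzero finitely generated torsion module $X$. If $\Lambda$ were Fitting-additive then $v$ would be additive on direct sums; applying this to the semisimple module $\Lambda/J(\Lambda)=\bigoplus_i S_i^{m_i}$, which admits a quadratic presentation whenever the $m_i$ are equal (and in any case one can pass to $\Lambda/\varpi\Lambda$ for the uniformiser $\varpi$ of $\zeta(\Lambda)$, which always does), and comparing with the reduced norm of the presenting element forces $r=1$. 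One does have to keep track of how $\Nrd$ interacts with the block structure and the Schur index of $D$, which is where the genuine bookkeeping lies---but no Morita step is needed.
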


\begin{remark} \label{rem:hereditary-max}
	A $p$-adic group ring $\Z_p[G]$ is hereditary if and only if 
	it is maximal.
	We give an indirect proof of this fact.
	Suppose that $\Z_p[G]$ is hereditary. Then $\mathcal{H}_p(G) = 
	\zeta(\Z_p[G])$ by \cite[Corollary 4.2]{MR3092262}
	(this follows easily from the definitions once one observes
	that the centre of a hereditary $\Z_p$-order is itself
	a maximal $\Z_p$-order).
	Now Proposition \ref{prop:best-denominators} implies
	that $p$ does not divide the order of the commutator subgroup of $G$.
	By Corollary \ref{cor:p-adic-additive} the group ring
	$\Z_p[G]$ is Fitting-additive and so by Proposition
	\ref{prop:hereditary-not-add} it is maximal.
\end{remark}	

Let $\Lambda$ be a Fitting order over the Fitting domain $\mathfrak o$.
In view of Example \ref{ex:hereditary} and 
Proposition \ref{prop:hereditary-not-add}
one may ask whether there are hereditary, non-maximal orders
when $\mathfrak o$ is not a complete discrete valuation ring.
We now show that this is indeed not the case.

\begin{proposition}
	Let $\Lambda$ be a Fitting order over the Fitting domain $\mathfrak o$.
	If $\Lambda$ is hereditary, then $\mathfrak o$ is a complete discrete valuation ring.
\end{proposition}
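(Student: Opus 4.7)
The strategy is to show that $\mathfrak o$ has global dimension at most $1$; combined with $\mathfrak o$ being a Noetherian integrally closed local domain, this forces $\mathfrak o$ to be a complete DVR (modulo the degenerate possibility that $\mathfrak o$ is a field, which the statement implicitly excludes). Concretely, I would aim to prove that $\mathrm{pd}_{\mathfrak o}(M) \leq 1$ for every $\mathfrak o$-module $M$ via a descent argument from the induced $\Lambda$-module $\Lambda \otimes_{\mathfrak o} M$, whose $\Lambda$-projective dimension is at most $1$ by the hereditary hypothesis on $\Lambda$.

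The descent rests on two ingredients: (a) $\Lambda$ is projective, hence free, as an $\mathfrak o$-module, so that restriction of scalars along $\mathfrak o \hookrightarrow \Lambda$ carries $\Lambda$-projectives to $\mathfrak o$-projectives; and (b) $\mathfrak o$ is an $\mathfrak o$-direct summand of $\Lambda$, so that $M = \mathfrak o \otimes_{\mathfrak o} M$ becomes an $\mathfrak o$-direct summand of $\Lambda \otimes_{\mathfrak o} M$. Ingredient (b) follows easily from (a): fixing an $\mathfrak o$-basis $e_1, \ldots, e_n$ of $\Lambda$ and writing $1_\Lambda = \sum a_i e_i$, Nakayama forces some $a_i$ to be a unit in $\mathfrak o$ (else $\Lambda = \mathfrak m \Lambda$, hence $\Lambda = 0$), and the $\mathfrak o$-linear map $\pi : \Lambda \to \mathfrak o$ defined by $\pi(e_i) = a_i^{-1}$ and $\pi(e_j) = 0$ for $j \neq i$ retracts the inclusion $\mathfrak o \hookrightarrow \Lambda$.

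The main obstacle is ingredient (a): showing that a hereditary Fitting order is automatically projective over $\mathfrak o$. One route is an Auslander-type theorem --- that a module-finite $\mathfrak o$-algebra of finite global dimension is maximal Cohen--Macaulay as an $\mathfrak o$-module, whence the Auslander--Buchsbaum formula forces $\mathrm{pd}_{\mathfrak o}(\Lambda) = 0$. The delicate point is verifying the hypotheses in the Fitting-domain setting, where $\mathfrak o$ is integrally closed but not a priori regular or Cohen--Macaulay in higher Krull dimension; one may wish to reduce first to a localisation or appeal to the structure theory of orders to sidestep this. Once (a) and (b) are established, one has the chain of inequalities
\[
\mathrm{pd}_{\mathfrak o}(M) \leq \mathrm{pd}_{\mathfrak o}(\Lambda \otimes_{\mathfrak o} M) \leq \mathrm{pd}_{\Lambda}(\Lambda \otimes_{\mathfrak o} M) \leq 1
\]
for every $\mathfrak o$-module $M$, giving $\mathfrak o$ hereditary, and hence a DVR (automatically complete by hypothesis on $\mathfrak o$).
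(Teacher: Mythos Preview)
Your descent strategy is sound once ingredient (a) is in hand, but (a) is precisely where the content of the proposition lies, and your proposed route to it via the Auslander--Buchsbaum formula is circular. That formula requires either $\mathrm{pd}_{\mathfrak{o}}(\Lambda) < \infty$ a priori, or that $\mathfrak{o}$ be regular so that every finitely generated module has finite projective dimension; neither is available, since regularity of $\mathfrak{o}$ is essentially the conclusion you seek. The auxiliary claim---that a module-finite algebra of finite global dimension is maximal Cohen--Macaulay over its base---is typically proved under the standing hypothesis that the base is already Cohen--Macaulay, and in any case one cannot hope for a blanket statement of the form ``finite global dimension of $\Lambda$ forces $\mathfrak{o}$ regular'': noncommutative crepant resolutions over normal non-regular local rings furnish module-finite algebras of finite global dimension that are not free over the base. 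So ingredient (a) cannot be obtained this way, and your chain of inequalities does not get started.

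The paper bypasses (a) altogether. It passes to the Wedderburn decomposition of $A$ and observes (citing Auslander--Goldman) that $\Lambda$ decomposes accordingly, with each $\Lambda_i$ a hereditary $\mathfrak{o}_i$-order in the central simple $F_i$-algebra $A_i$. A theorem of Auslander--Goldman then asserts directly that each $\mathfrak{o}_i$ must be a Dedekind domain; hence $\mathfrak{o}$ has Krull dimension $1$, and a Fitting domain of Krull dimension $1$ is a complete discrete valuation ring. The substantive step---that the base of a hereditary order in a central simple algebra is Dedekind---is packaged inside the cited result. Your own fallback, to ``appeal to the structure theory of orders'', is exactly this theorem, and once you invoke it the descent argument becomes redundant.
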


\begin{proof}
	Suppose that $\Lambda$ is a hereditary Fitting order over $\mathfrak o$
	in the separable $F$-algebra $A$, where as before $F$ denotes the quotient
	field of $\mathfrak o$. Let $A = A_1 \oplus \dots \oplus A_t$ be
	the Wedderburn decomposition of $A$ so that each $A_i$ is a central simple
	$F_i = \zeta(A_i)$-algebra. 
	Let $\mathfrak o_i$ be the integral closure of $\mathfrak o$ in $F_i$.
	Then we likewise have a decomposition
	\[
		\Lambda= \Lambda_1 \oplus \dots \oplus \Lambda_t,
	\]
	where each $\Lambda_i$ is a hereditary $\mathfrak{o}_i$-order
	by \cite[Proposition 2.2]{MR0151489}.
	Moreover, each $\mathfrak o_i$ is in fact a Dedekind domain by
	\cite[Theorem 2.6]{MR0151489} and thus has Krull dimension $1$.
	The Fitting domain $\mathfrak{o}$ then also has Krull dimension $1$
	by \cite[Proposition 4.15]{MR1322960}. However, a Fitting domain
	of Krull dimension $1$ is a complete discrete valuation ring.
\end{proof}

\begin{remark}
	In view of Corollary \ref{cor:p-adic-additive} and Remark
	\ref{rem:hereditary-max} one may ask the following question:
	Is every $p$-adic group ring Fitting-additive?
	Similarly, is the Iwasawa algebra $\Z_p \llbracket G \rrbracket$
	of a one-dimensional $p$-adic Lie group $G$ always Fitting-additive?
	In both cases one knows that 
	\[
	\Fitt_{\Lambda}^{\max}(M) \cdot \Fitt_{\Lambda}^{\max}(N)
	= \Fitt_{\Lambda}^{\max}(M \oplus N)
	\]
	whenever at least one of the two $\Lambda$-modules $M$ and $N$
	has projective dimension at most $1$. This follows
	from \cite[Proposition 2.11]{Kataoka}.
\end{remark}

\appendix

\section{Fitting invariants and Morita equivalence \\
by Henri Johnston and Andreas Nickel}

\subsection{Preliminaries on Morita equivalence}

Let $\Lambda$ and $R$ be any two unitary rings.
Let ${}_{R}\mathfrak{M}$ denote the category of left $R$-modules,
${}_{R}\mathfrak{M}_{\Lambda}$ the category of $(R,\Lambda)$-bimodules, and so on.
For a $(R,\Lambda)$-bimodule $M$, we let ${}_{R}M$ and $M_{\Lambda}$ denote $M$ considered as a left $R$-module and a right $\Lambda$-module, respectively. 
The rings $\Lambda$ and $R$ are said to be Morita equivalent if the categories ${}_{R}\mathfrak{M}$ and ${}_{\Lambda}\mathfrak{M}$ are equivalent
(this is the case if and only if the categories $\mathfrak{M}_{R}$ and $\mathfrak{M}_{{\Lambda}}$ are equivalent).
For example, if $n \in \N$ and $\Lambda=M_{n \times n}(R)$ then it is well-known that $\Lambda$ and $R$ are Morita equivalent.
We shall recall and use some basic facts on Morita equivalence, and refer the reader
to \cite[\S 3D]{MR632548}, \cite[Chapter 4]{MR1972204}, \cite[Chapter 7]{MR1653294} or \cite[\S 22]{MR1245487} for further details.

A generator $P$ for $\mathfrak{M}_{R}$ is a right $R$-module such that every right $R$-module is an epimorphic image of
$\oplus_{i \in I} P$ for $I$ sufficiently large, or equivalently, if $R_{R}$ is a direct summand of $P^{n}$ for some $n \in \N$.
A progenerator for $\mathfrak{M}_{R}$ is a finitely generated projective generator.
A progenerator for $_{\Lambda}\mathfrak{M}$ is defined analogously. 

A $(\Lambda,R)$-progenerator is a $(\Lambda,R)$-bimodule $P$ such that $P_{R}$ is a progenerator for $\mathfrak{M}_{R}$
and the canonical ring homomorphism $\Lambda \rightarrow \End(P_{R})$, $\lambda \mapsto (p \mapsto \lambda p)$ is an isomorphism.
In this case, $_{\Lambda}P$ is a progenerator for ${}_{\Lambda}\mathfrak{M}$ and the canonical ring homomorphism 
$R^{\mathrm{op}} \rightarrow \End({}_{\Lambda}P)$, $r \mapsto (p \mapsto pr)$ is an isomorphism.
For such a $P$ we let $Q=P^{*}=\Hom_{R}(P_{R},R_{R})$ be the $R$-linear dual of $P$.
Note that $Q$ is an $(R,\Lambda)$-bimodule with
left action of $R$ defined by $(rq)p=r(qp)$ and  right action of $\Lambda$ defined by $(q\lambda)p = q(\lambda p)$,
where $r \in R$, $q \in Q$, $p \in P$ and $\lambda \in \Lambda$.
Moreover, $P^{*}=Q$ is in fact a $(R,\Lambda)$-progenerator.

Now suppose that $\Lambda$ and $R$ are Morita equivalent.
Then by a theorem of Morita (see \cite[Theorem 22.2]{MR1245487}) there exists a $(\Lambda,R)$-progenerator $P$ such that the functors
\begin{align*}
G : & {}_{R}\mathfrak{M} \longrightarrow {}_{\Lambda}\mathfrak{M}, \quad N \mapsto P \otimes_{R} N \\
F : &{}_{\Lambda}\mathfrak{M} \longrightarrow {}_{R}\mathfrak{M}, \quad M \mapsto P^{\ast} \otimes_{\Lambda} M
\end{align*}
are mutually inverse category equivalences.
Moreover, the isomorphism classes of category equivalences
${}_{R}\mathfrak{M} \longrightarrow {}_{\Lambda}\mathfrak{M}$ are in one-one correspondence 
with the isomorphism classes of $(\Lambda,R)$-progenerators (see \cite[Theorem 18.28]{MR1653294});
of course the same statement holds with $\Lambda$ and $R$ swapped.

\begin{remark}\label{rmk:when-iso-to-product-of-matrix-rings}
If $R$ is a ring over which all finitely generated projective modules are free (this is the case when $R$ is a local ring, for example)
then $\Lambda$ is Morita equivalent to $R$ if and only if $\Lambda$ is isomorphic to $M_{n \times n}(R)$ for some $n \in \N$
(see \cite[Corollary 18.36]{MR1653294}).
More generally, if $R=R_{1} \oplus \cdots \oplus R_{k}$ is a direct product of rings and each $R_{i}$ is a ring over which all finitely generated projective modules are free, then $\Lambda$ is Morita equivalent to $R$ if and only if $\Lambda$ is isomorphic to
$M_{n_{1} \times n_{1}}(R_{1}) \oplus \cdots \oplus M_{n_{k} \times n_{k}}(R_{k})$ for some $n_{1}, \ldots, n_{k} \in \N$.
\end{remark}

\subsection{Rings that are Morita equivalent to commutative rings}

We now specialise to the situation where $\Lambda$ is Morita equivalent to its centre $R:= \zeta(\Lambda)$.
As Morita equivalent rings have isomorphic centres (see \cite[Corollary 18.42]{MR1653294}),
this assumption is the same as supposing that $\Lambda$ is Morita equivalent to some commutative ring.
We recall the convention that all $\Lambda$-modules are assumed to be left modules unless stated otherwise.

\begin{definition}\label{def:Morita-equiv-Fitt}
	Let $M$ be a finitely presented $\Lambda$-module.
	Then we define the Fitting invariant of $M$ over $\Lambda$ to be the $R$-ideal
	\[
	\Fitt_{\Lambda}(M) := \Fitt_{R}(F(M)) = \Fitt_{R}(P^{\ast} \otimes_{\Lambda} M).
	\]
\end{definition}

\begin{remark}\label{rmk:compatible-Fitt-defs}
	Suppose that $\Lambda$ is both a Fitting order and Morita equivalent to its centre $R$.
	We claim that $\Fitt_{\Lambda}(M) = \Fitt_{\Lambda}^{\max}(M)$
	for every finitely generated $\Lambda$-module $M$ and so the two notions of Fitting invariant coincide in this setting. 
	To see this, note that since $\Lambda$ is a Fitting order, \cite[Example 23.3 and Theorem 23.11]{MR1838439}
	show that we can write $R=R_{1} \oplus \cdots \oplus R_{k}$ where each $R_{i}$ is a commutative local ring.
	Thus by Remark \ref{rmk:when-iso-to-product-of-matrix-rings}, $\Lambda$ is isomorphic to
	$M_{n_{1} \times n_{1}}(R_{1}) \oplus \cdots \oplus M_{n_{k} \times n_{k}}(R_{k})$ for some $n_{1}, \ldots, n_{k} \in \N$.
	The claim now follows by applying \cite[Proposition 3.4]{MR3092262} to each component.
\end{remark}

\begin{remark} 
Suppose that $\Lambda=M_{n \times n}(R)$ for some $n \in \N$ and some commutative ring $R$.
In this situation, Fitting invariants over $\Lambda$ are defined in \cite[\S 2]{MR3092262}
using an explicit version of the Morita equivalence of $\Lambda$
and $R$, and this definition agrees with Definition \ref{def:Morita-equiv-Fitt}.
Moreover, it is trivial to extend the definition of ibid.\ to the situation in which $\Lambda$ is a direct product of matrix rings over commutative
rings; again both definitions agree in this situation. However, as we shall see in Example \ref{ex:morita-not-matrix} below, 
a ring that is Morita equivalent to its centre need not be isomorphic to a product of matrix rings over commutative rings;
thus the results presented here extend those of \cite[\S 2]{MR3092262}.
\end{remark}

\begin{remark}\label{rmk:Fitt-over-R-vs-Lambda}
Suppose that $M$ is a $\Lambda$-module that is finitely presented over both $\Lambda$ and its centre $R$.
Then of course one can consider $\Fitt_{R}(M)$, but in general this is a coarser invariant than $\Fitt_{\Lambda}(M)$.
For example, if $\Lambda=M_{n \times n}(R)$ for some $n \in \N$ then $\Fitt_{R}(M)=\Fitt_{\Lambda}(M)^{n}$
by \cite[Theorem 2.2(viii)]{MR3092262}.
\end{remark}

\begin{example}\label{ex:morita-not-matrix}
Let $R$ be a Dedekind domain and let $\mathrm{Cl}(R)$ denote its class group. 
Suppose that there exists a non-zero ideal $\mathfrak{a}$ of $R$ such that the image of $\mathfrak{a}$ in
$\mathrm{Cl}(R)/\mathrm{Cl}(R)^{2}$ is non-trivial.
For example, we can take $\mathfrak{a}$ to be any non-principal ideal of $R=\Z[\sqrt{-5}]$.
Let
\[
\Lambda = \left( 
\begin{array}{cc}
R & \mathfrak{a} \\
\mathfrak{a}^{-1} & R
\end{array}
\right)
\]
be the ring of all $2 \times 2$ matrices $(x_{ij})$ where $x_{11}$ ranges over all elements of $R$,
$x_{12}$ ranges over all elements of $\mathfrak{a}$, etc.
Let $P = R \oplus \mathfrak{a}$ and note that this is a progenerator for $\mathfrak{M}_{R}$.
Then $\Lambda \simeq \End_{R}( P )$ and so $\Lambda$ is Morita equivalent to $R$
by \cite[Proposition 18.33]{MR1653294}. However, \cite[Exercise 4.2.5]{MR1773562}
shows that $\Lambda$ is \emph{not} isomorphic to $M_{2 \times 2}(R)$.
\end{example}

\begin{proposition}
	The Fitting invariant $\Fitt_{\Lambda}(M)$ is well-defined.
\end{proposition}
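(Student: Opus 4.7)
The plan is to reduce well-definedness to two classical commutative-algebra facts: independence of Fitting ideals from the choice of finite presentation (Theorem \ref{thm:Fitt-well-comm}), and invariance of Fitting ideals under tensoring with an invertible module.

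First I would confirm that $F(M) := P^{\ast} \otimes_{\Lambda} M$ is finitely presented over $R$, so that $\Fitt_R(F(M))$ makes sense. Since $F$ is a category equivalence, it sends a finite presentation $\Lambda^a \to \Lambda^b \to M \to 0$ to an exact sequence $(P^{\ast})^a \to (P^{\ast})^b \to F(M) \to 0$ of finitely generated projective $R$-modules. By a standard argument---choose $Q$ with $P^{\ast} \oplus Q \simeq R^m$ and adjoin copies of $Q$ to both terms (with identity maps between the adjoined summands to preserve exactness)---one extends this to a genuine finite presentation of $F(M)$ by free $R$-modules.

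Next I would address the dependence on the choice of $(\Lambda,R)$-progenerator $P$. Let $P'$ be another such progenerator, with associated functor $F'$. The composition $F' \circ F^{-1}$ is an auto-equivalence of ${}_{R}\mathfrak{M}$. Both $F$ and $F'$ equip their output with the canonical $R$-module structure coming from multiplication in the target of $\Hom_R(-,R)$, and each identifies $R$ with the centre of the category via the prescribed equality $R = \zeta(\Lambda)$; hence $F' \circ F^{-1}$ is $R$-linear and induces the identity on centres. By the classical Morita theorem for commutative rings, such an auto-equivalence has the form $L \otimes_R (-)$ for some invertible $R$-module $L$, and therefore $F'(M) \simeq L \otimes_R F(M)$.

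Finally I would prove the lemma that $\Fitt_R(L \otimes_R N) = \Fitt_R(N)$ for every invertible $R$-module $L$ and every finitely presented $R$-module $N$. Since Fitting ideals commute with base change (Remark \ref{rem:base-change}) and two ideals of a commutative ring coincide if and only if they agree after localising at every prime ideal, it suffices to verify the equality locally; but $L_{\mathfrak{p}}$ is free of rank one for each prime $\mathfrak{p}$ of $R$, giving $(L \otimes_R N)_{\mathfrak{p}} \simeq N_{\mathfrak{p}}$, and the equality follows.

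The main obstacle is the second step: one must rule out that $F' \circ F^{-1}$ involves a twist by a nontrivial ring automorphism of $R$, which would in general alter the Fitting ideal. This relies on the fixed identification $R = \zeta(\Lambda)$ throughout the construction, together with the observation that for $r \in R \subseteq \Lambda$, left multiplication by $r$ on $P^{\ast}$ (respectively on $(P')^{\ast}$) agrees with the canonical $R$-action on the linear dual, so that both equivalences respect the same central structure and the comparison $F' \circ F^{-1}$ lies in the Picard group rather than the larger group of semilinear auto-equivalences.
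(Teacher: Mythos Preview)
Your argument is correct and shares the paper's overall plan: show $F(M)$ is finitely presented, then reduce independence from $P$ to invariance of $\Fitt_R$ under the self-equivalence $F'\circ G$ of ${}_R\mathfrak{M}$. The difference lies in how the second step is executed. The paper records it as Lemma~\ref{lem:auto-equivs}, asserting that \emph{every} self-equivalence $T$ of ${}_R\mathfrak{M}$ preserves annihilators and Fitting ideals, and argues via $T\simeq W\otimes_R(-)$ together with the identification $\End_R(W)\simeq R$ and a determinant computation. You instead first pin down that the relevant self-equivalence is $L\otimes_R(-)$ for an \emph{invertible} module $L$---using that both $P^{*}$ and $(P')^{*}$ carry the canonical $R$-action and that $R=\zeta(\Lambda)$ acts centrally throughout---and then finish by localizing, where $L$ becomes free of rank one. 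Your final paragraph is the real point of divergence, and your caution there is warranted: the twist functor $M\mapsto M^{\sigma}$ for a nontrivial $\sigma\in\Aut(R)$ is a self-equivalence of ${}_R\mathfrak{M}$ that does \emph{not} preserve Fitting ideals (take $R=\Z[i]$, $\sigma$ complex conjugation, $M=R/(1+2i)$, so that $\Fitt_R(M)=(1+2i)\neq(1-2i)=\Fitt_R(M^{\sigma})$), so one genuinely needs the comparison bimodule $(P')^{*}\otimes_{\Lambda}P$ to be symmetric. Your proof makes this explicit, whereas the paper's lemma is stated for arbitrary self-equivalences even though in the application only symmetric bimodules arise.
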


\begin{proof}
   	We first note that $F(M)$ is a finitely presented $R$-module since the property of being finitely presented is preserved under equivalence
	of module categories (see the discussion in \cite[\S 18A, p.\ 481]{MR1653294}, for instance).
	Now let $G'$ and $F'$ be another pair of mutually inverse category equivalences of ${}_{R}\mathfrak{M}$ and ${}_{\Lambda}\mathfrak{M}$.
	Then the compositions of functors
	$F' \circ G$ and $F \circ G'$ are mutually inverse category self-equivalences of ${}_{R}\mathfrak{M}$.
	Thus the result follows from Lemma \ref{lem:auto-equivs} below.
\end{proof}

\begin{lemma}\label{lem:auto-equivs}
	Let $T : {}_{R}\mathfrak{M} \longrightarrow {}_{R}\mathfrak{M}$ be any self-equivalence of ${}_{R}\mathfrak{M}$.
	If $M$ is any $R$-module then $\Ann_{R}(M) = \Ann_{R}(T(M))$.
	Moreover, $\Fitt_{R}(M) = \Fitt_{R}(T(M))$ if we further assume that $M$ is finitely presented.
\end{lemma}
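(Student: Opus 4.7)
The plan is to exploit the standard Morita-theoretic description of self-equivalences of ${}_{R}\mathfrak{M}$ when $R$ is commutative. By the discussion preceding the lemma, $T$ is naturally isomorphic to $L \otimes_{R} -$ for some $(R,R)$-progenerator $L$. Because $R$ is commutative, the canonical maps $R \to \End({}_R L)$ and $R^{\mathrm{op}} \to \End(L_R)$ factor through $\zeta$, so the two $R$-actions on $L$ agree; combined with the progenerator property, this forces $L$ to be a faithfully projective $R$-module with $R \cong \End_R(L)$, i.e.\ an invertible (rank-one projective) $R$-module. Equivalently, the isomorphism classes of self-equivalences of ${}_R\mathfrak{M}$ are parametrised by $\mathrm{Pic}(R)$. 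I will therefore reduce both statements to the claim that tensoring with an invertible $R$-module preserves annihilators and Fitting ideals.

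For annihilators, write $L^{-1} := \Hom_R(L,R)$ so that $L \otimes_R L^{-1} \cong R$ as $(R,R)$-bimodules. The inclusion $\Ann_R(M) \subseteq \Ann_R(L \otimes_R M)$ is immediate since $R$ is central. Conversely, if $r \in \Ann_R(L \otimes_R M)$, then applying $L^{-1} \otimes_R -$ yields $r \in \Ann_R(L^{-1} \otimes_R L \otimes_R M) = \Ann_R(M)$, giving the reverse inclusion.

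For Fitting ideals, I would argue locally. Since an $R$-ideal is determined by its localisations at all prime ideals, and since Fitting ideals commute with base change by Remark \ref{rem:base-change}, it suffices to show that
\[
\Fitt_{R_{\mathfrak p}}\bigl((L \otimes_R M)_{\mathfrak p}\bigr) \;=\; \Fitt_{R_{\mathfrak p}}(M_{\mathfrak p})
\]
for every prime $\mathfrak p$ of $R$. But $L$ is locally free of rank $1$, so $L_{\mathfrak p} \cong R_{\mathfrak p}$, whence $(L \otimes_R M)_{\mathfrak p} \cong L_{\mathfrak p} \otimes_{R_{\mathfrak p}} M_{\mathfrak p} \cong M_{\mathfrak p}$ as $R_{\mathfrak p}$-modules. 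Thus the two Fitting ideals agree after every localisation and hence globally.

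The only genuine subtlety, and the main obstacle, is the first step: justifying that every self-equivalence of ${}_R \mathfrak{M}$ is realised by tensoring with an invertible $R$-module. This is a well-known consequence of Morita theory together with the fact that Morita equivalent rings have isomorphic centres (cited in the excerpt), but the argument should be spelled out by invoking \cite[Theorem 22.2]{MR1245487} to obtain a progenerator $P$, then using commutativity of $R$ to identify $P$ with an element of $\mathrm{Pic}(R)$; after that, the annihilator and Fitting-ideal statements reduce to the elementary local arguments above.
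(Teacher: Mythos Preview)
Your opening move---replacing $T$ by $L\otimes_{R}-$ for an $(R,R)$-progenerator $L$---is exactly what the paper does. The step that fails is the next one: you assert that the two $R$-actions on $L$ coincide, so that $L$ represents a class in $\mathrm{Pic}(R)$. The justification you give (that the canonical maps ``factor through $\zeta$'') does not force this, and the conclusion is false in general. For any ring automorphism $\sigma$ of $R$, the bimodule $L={}_{1}R_{\sigma}$ (ordinary left action, right action $w\cdot r=w\,\sigma(r)$) is an $(R,R)$-progenerator, and $L\otimes_{R}M$ is $M$ with its $R$-action pulled back along $\sigma^{-1}$. Taking $R=k[x,y]$, $\sigma$ the swap $x\leftrightarrow y$, and $M=R/(x)$ gives $\Ann_{R}(M)=(x)$ but $\Ann_{R}(L\otimes_{R}M)=(y)$; the Fitting ideals differ in the same way. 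Hence the lemma, read literally, is false, and your localisation argument---which needs $L_{\mathfrak p}\cong R_{\mathfrak p}$ as \emph{bimodules}---cannot be repaired without an extra hypothesis.

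It is worth knowing that the paper's own proof rests on precisely the same hidden assumption. The annihilator step $x\cdot(W\otimes_{R}M)=W\otimes_{R}(x\cdot M)$ uses $xw=wx$ in $W$, and the Fitting step identifies the intrinsic determinant of $1\otimes h\in\End_{R}(W^{b})$ with $\det(h)$, which again fails for $W={}_{1}R_{\sigma}$ (one gets $\sigma(\det h)$ instead). Both arguments become correct if one assumes $T$ is $R$-linear, equivalently that $W$ is a symmetric bimodule and hence a genuine invertible module. Under that added hypothesis your approach and the paper's diverge only in the Fitting part: you localise and use $L_{\mathfrak p}\cong R_{\mathfrak p}$, whereas the paper argues globally via the ring isomorphism $\End_{R}(W)\cong R$ to obtain $\det(1\otimes h)=\det(h)$ directly; once $L$ is known to be invertible, either route is routine.
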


\begin{proof}
By \cite[Corollary 18.29]{MR1653294} there exists an $(R,R)$-progenerator $W$ such that the functor
$T': {}_{R}\mathfrak{M} \longrightarrow {}_{R}\mathfrak{M}$, $M \mapsto W \otimes_{R} M$ is naturally isomorphic to $T$.
In particular, $T(M)$ and $T'(M)$ are isomorphic as $R$-modules.
Hence $\Ann_{R}(T(M))=\Ann_{R}(T'(M))$ and, if $M$ is finitely presented, $\Fitt_{R}(T(M))=\Fitt_{R}(T'(M))$.
Thus we can and do assume without loss of generality that $T=T'$.

Let $x \in \Ann_{R}(M)$. Then
\[
x \cdot T(M) = x \cdot (W \otimes_{R} M) = W \otimes_{R} (x \cdot M) = 0.
\]
Hence $\Ann_{R}(M) \subset \Ann_{R}(T(M))$. 
Moreover, there exists a functor $U: {}_{R}\mathfrak{M} \longrightarrow {}_{R}\mathfrak{M}$ such that $U \circ T$ is naturally isomorphic to 
the identity functor on ${}_{R}\mathfrak{M}$ and so the same argument gives
$\Ann_{R}(T(M)) \subset \Ann_{R}(UT(M)) = \Ann_{R}(M)$. Thus $\Ann_{R}(M)=\Ann_{R}(T(M))$.

For the second claim choose a finite presentation
$R^{a} \stackrel{h}\longrightarrow R^{b} \twoheadrightarrow M$.
As $\Fitt_{R}(M)$ is generated by the $b \times b$ minors of $h$, we can and do assume that $a=b$.
Hence we may view $h$ as an element of $M_{b \times b}(R)$.
Applying $T$ yields an endomorphism
\[
T(h) = 1 \otimes h \in  \End_{R}(W \otimes_{R}  R^{b}) \simeq  \End_{R}(W^{b}) \simeq M_{b \times b}(R),
\]
where the last isomorphism is induced by $\End_{R}(W) \simeq R$.
Thus we have
\[
\det (T(h)) = \det(1 \otimes h) = \det(h).
\]
This shows $\Fitt_{R}(M) \subset \Fitt_{R}(T(M))$ and we again obtain equality by symmetry.
\end{proof}

\begin{example}
	Let $R$ be a Dedekind domain and let $\mathfrak a$ be a non-zero
	(fractional) ideal of $R$. Then $\mathfrak a$ is an $(R,R)$-progenerator
	and $\mathfrak{a}^{\ast}$ naturally identifies with $\mathfrak{a}^{-1}$.
	Moreover, Lemma \ref{lem:auto-equivs} implies that
	$\Fitt_{R}(M)=\Fitt_{R}(\mathfrak{a} \otimes_{R} M)$ 
	for every finitely generated $R$-module $M$.
\end{example}

\begin{lemma} \label{lem:basic-props-Morita}
Let $M_{1}$, $M_{2}$, $M_{3}$ be finitely presented $\Lambda$-modules.
\begin{enumerate}
		\item 
		If $\pi: M_1 \twoheadrightarrow M_2$ is an epimorphism, then
		$\Fitt_{\Lambda}(M_1) \subseteq \Fitt_{\Lambda}(M_2)$.
		\item
		Fitting invariants over $\Lambda$ behave well under direct sums:
		\[
		\Fitt_{\Lambda}(M_1 \oplus M_3) = 
		\Fitt_{\Lambda}(M_1) \cdot \Fitt_{\Lambda}(M_3).
		\]
		\item
		If $M_1 \rightarrow M_2 \rightarrow M_3 \rightarrow 0$
		is an exact sequence, then
		\[
		\Fitt_{\Lambda}(M_1) \cdot \Fitt_{\Lambda}(M_3) 
		\subseteq \Fitt_{\Lambda}(M_2).
		\]
	\end{enumerate}
\end{lemma}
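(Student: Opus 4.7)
The plan is to reduce each statement to the corresponding claim in the commutative setting (Lemma \ref{lem:basic-props-comm}) by exploiting that $F = P^{\ast} \otimes_{\Lambda} -$ is an equivalence of categories between ${}_{\Lambda}\mathfrak{M}$ and ${}_{R}\mathfrak{M}$. First I would record the two structural properties of $F$ that do all the work: (a) $F$ is additive, hence commutes with finite direct sums, and (b) $F$ is exact. Property (a) is automatic for any equivalence of additive categories. Property (b) follows either from the fact that an equivalence preserves all limits and colimits, or more concretely from the observation that $P^{\ast}$ is a $(R,\Lambda)$-progenerator and therefore projective (hence flat) as a right $\Lambda$-module. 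I would also reiterate, as in the proof of well-definedness, that $F$ sends finitely presented $\Lambda$-modules to finitely presented $R$-modules, so that $\Fitt_R(F(M_i))$ is defined on each of the three modules involved.

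With these tools in hand, each item is a direct translation. For (i), given an epimorphism $\pi: M_1 \twoheadrightarrow M_2$, additivity and exactness of $F$ imply that $F(\pi): F(M_1) \twoheadrightarrow F(M_2)$ is again an epimorphism, so Lemma \ref{lem:basic-props-comm}(i) yields
\[
\Fitt_{\Lambda}(M_1) = \Fitt_R(F(M_1)) \subseteq \Fitt_R(F(M_2)) = \Fitt_{\Lambda}(M_2).
\]
For (ii), additivity of $F$ gives a natural isomorphism $F(M_1 \oplus M_3) \simeq F(M_1) \oplus F(M_3)$, and then Lemma \ref{lem:basic-props-comm}(ii) applied to $R$-modules yields the asserted equality. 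For (iii), exactness of $F$ turns the given exact sequence into an exact sequence
\[
F(M_1) \longrightarrow F(M_2) \longrightarrow F(M_3) \longrightarrow 0
\]
of finitely presented $R$-modules, whence Lemma \ref{lem:basic-props-comm}(iii) gives
\[
\Fitt_{\Lambda}(M_1) \cdot \Fitt_{\Lambda}(M_3) = \Fitt_R(F(M_1)) \cdot \Fitt_R(F(M_3)) \subseteq \Fitt_R(F(M_2)) = \Fitt_{\Lambda}(M_2).
\]

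There is essentially no obstacle here: the whole point of Definition \ref{def:Morita-equiv-Fitt} is to transport Fitting invariants across the equivalence, and the listed properties in the commutative case are stable under any additive exact functor. If one wanted to be completely pedantic, the only slightly non-formal point is checking that $F$ is exact, but as noted this is immediate from either the categorical viewpoint or from the projectivity of $P^{\ast}$ as a right $\Lambda$-module, which is part of the progenerator hypothesis. No reduced-norm machinery or Fitting-order structure is used, so this lemma holds in the full generality of rings Morita equivalent to a commutative ring.
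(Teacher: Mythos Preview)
Your proposal is correct and matches the paper's proof essentially verbatim: the paper simply notes that the equivalence $F$ preserves epimorphisms, direct sums and exact sequences, and then invokes Lemma \ref{lem:basic-props-comm}. Your version just spells out in slightly more detail why $F$ has these preservation properties.
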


\begin{proof}
The equivalence of categories $F : {}_{\Lambda}\mathfrak{M} \longrightarrow {}_{R}\mathfrak{M}$ preserves epimorphisms,
direct sums and exact sequences by \cite[Propositions 21.2, 21.4 and 21.5]{MR1245487}.
Thus the results follow from the corresponding properties of Fitting ideals over $R$ (see Lemma \ref{lem:basic-props-comm}).
\end{proof}

\begin{lemma}
Let $M$ be a finitely presented $\Lambda$-module.  
Then for any homomorphism $R \rightarrow S$ of commutative rings,
$S \otimes_{R} M$ is a finitely presented $S \otimes_{R} \Lambda$-module and
\[
		\Fitt_{S \otimes_{R} \Lambda}(S \otimes_{R} M) = S \otimes_{R} \Fitt_{\Lambda}(M).
\]
\end{lemma}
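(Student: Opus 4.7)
The plan is to reduce to the commutative base-change identity (Remark \ref{rem:base-change}) by showing that the Morita equivalence between $\Lambda$ and $R$ propagates to one between $S \otimes_R \Lambda$ and $S$, and that the functor $F$ commutes with base change. That $S \otimes_R M$ is finitely presented over $S \otimes_R \Lambda$ is immediate: applying $S \otimes_R -$ to a finite presentation $\Lambda^a \xrightarrow{h} \Lambda^b \twoheadrightarrow M$ yields a finite presentation of $S \otimes_R M$ of the same shape.

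Fix a $(\Lambda, R)$-progenerator $P$ implementing the Morita equivalence and set $P_S := S \otimes_R P$. The first step is to check that $P_S$ is an $(S \otimes_R \Lambda, S)$-progenerator. Since $P_R$ is finitely generated projective, so is $(P_S)_S$; and since $R_R$ is a direct summand of some $P^n$, base change shows that $S_S$ is a direct summand of $(P_S)^n$, making $(P_S)_S$ a progenerator for $\mathfrak{M}_S$. The canonical isomorphism $\Lambda \simeq \End_R(P_R)$ base-changes to $S \otimes_R \Lambda \simeq S \otimes_R \End_R(P_R) \simeq \End_S((P_S)_S)$, where the last step uses the standard fact that $\Hom_R(P_R, -)$ commutes with arbitrary base change whenever $P_R$ is finitely generated projective. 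By the same fact, $(P_S)^{\ast} = \Hom_S(P_S, S)$ is naturally isomorphic to $S \otimes_R P^{\ast}$ as an $(S, S \otimes_R \Lambda)$-bimodule.

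The resulting Morita functor $F' : {}_{S \otimes_R \Lambda}\mathfrak{M} \to {}_S\mathfrak{M}$ then carries $S \otimes_R M$ to
\begin{align*}
(P_S)^{\ast} \otimes_{S \otimes_R \Lambda} (S \otimes_R M)
&\simeq (S \otimes_R P^{\ast}) \otimes_{S \otimes_R \Lambda} \bigl((S \otimes_R \Lambda) \otimes_\Lambda M\bigr) \\
&\simeq (S \otimes_R P^{\ast}) \otimes_\Lambda M \\
&\simeq S \otimes_R (P^{\ast} \otimes_\Lambda M) \; = \; S \otimes_R F(M),
\end{align*}
using the identity $S \otimes_R M \simeq (S \otimes_R \Lambda) \otimes_\Lambda M$ together with the commutativity of the two tensor products. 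Applying Remark \ref{rem:base-change} to the finitely presented $R$-module $F(M)$ then yields
\[
\Fitt_{S \otimes_R \Lambda}(S \otimes_R M) = \Fitt_S(F'(S \otimes_R M)) = \Fitt_S(S \otimes_R F(M)) = S \otimes_R \Fitt_R(F(M)) = S \otimes_R \Fitt_\Lambda(M).
\]

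The main obstacle is the verification in the second paragraph that $P_S$ is a progenerator and that its endomorphism ring is correctly identified; once this is done, the rest reduces to formal tensor-product manipulations together with the commutative base-change identity. As a by-product, the Morita equivalence of $S \otimes_R \Lambda$ with $S$ also confirms that $\zeta(S \otimes_R \Lambda) = S$, so that Definition \ref{def:Morita-equiv-Fitt} genuinely applies to the base-changed ring.
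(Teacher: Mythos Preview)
Your proof is correct and follows essentially the same route as the paper: base-change the progenerator to obtain a Morita equivalence between $S \otimes_R \Lambda$ and $S$, identify the new Morita functor with $S \otimes_R F(-)$, and then invoke the commutative base-change identity of Remark~\ref{rem:base-change}. The paper works directly with $S \otimes_R P^{\ast}$ and declares the progenerator verification ``straightforward'', whereas you instead base-change $P$ and then identify $(P_S)^{\ast}$ with $S \otimes_R P^{\ast}$; this is a cosmetic difference. Your closing remark that the Morita equivalence forces $\zeta(S \otimes_R \Lambda) = S$, so that Definition~\ref{def:Morita-equiv-Fitt} applies, is a useful point that the paper leaves implicit.
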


\begin{proof}
The first claim follows by applying the right exact functor $S \otimes_{R} -$ to a finite presentation of $M$.
For the second claim, observe that since $P^{*}$ is a $(R,\Lambda)$-progenerator, it is straightforward to check from the definitions
that $S \otimes_{R} P^{*}$ is a $(S,S \otimes_{R} \Lambda)$-progenerator.
Thus $(S \otimes_{R} P^{*}) \otimes_{S \otimes_{R} \Lambda} -$ induces an equivalence of categories
${}_{S \otimes_{R} \Lambda} \mathfrak{M} \rightarrow {}_{S} \mathfrak{M}$
and so Definition~\ref{def:Morita-equiv-Fitt} and Remark \ref{rem:base-change} give
\begin{align*}
\Fitt_{S \otimes_{R} \Lambda}(S \otimes_{R} M)
& = \Fitt_{S}( (S \otimes_{R} P^{*}) \otimes_{S \otimes_{R} \Lambda} (S \otimes_{R} M )) \\
& = \Fitt_{S}( S \otimes_{R} (P^{*} \otimes_{\Lambda} M) ) \\
&= S \otimes_{R} \Fitt_{R}( P^{*} \otimes_{\Lambda} M ) \\
&= S \otimes_{R} \Fitt_{\Lambda}(M). \qedhere
\end{align*}

\end{proof}

\begin{proposition}
	We have $\Fitt_{\Lambda}(M) \subset \Ann_{R}(M)$.
\end{proposition}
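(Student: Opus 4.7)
The plan is to reduce to the commutative Fitting--annihilator inclusion (Theorem~\ref{thm:fitt-ann-comm}) via the Morita equivalence that defines $\Fitt_{\Lambda}(M)$. By Definition~\ref{def:Morita-equiv-Fitt} we have $\Fitt_{\Lambda}(M) = \Fitt_{R}(F(M))$, where $F(M) = P^{\ast} \otimes_{\Lambda} M$ is a finitely presented $R$-module (as noted in the proof that $\Fitt_{\Lambda}(M)$ is well-defined). Theorem~\ref{thm:fitt-ann-comm} then yields
\[
\Fitt_{\Lambda}(M) \;=\; \Fitt_{R}(F(M)) \;\subseteq\; \Ann_{R}(F(M)),
\]
so the entire task reduces to proving the identity $\Ann_{R}(F(M)) = \Ann_{R}(M)$.

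To establish that identity, I would exploit the fact that $R = \zeta(\Lambda)$ acts on every $\Lambda$-module through the centre. For each $r \in R$, let $\mu_{r} \colon M \to M$ denote multiplication by $r$; this is a $\Lambda$-module endomorphism precisely because $r$ is central. The functor $F$ sends $\mu_{r}$ to the endomorphism of $F(M) = P^{\ast} \otimes_{\Lambda} M$ given by $p^{\ast} \otimes m \mapsto p^{\ast} \otimes rm$. Since $r \in R$ also acts on $P^{\ast}$ from the left and $(rp^{\ast})m = p^{\ast}(rm)$ (using centrality together with the definition of the $(R,\Lambda)$-bimodule structure on $P^{\ast}$), this endomorphism coincides with multiplication by $r$ on the $R$-module $F(M)$. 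Hence $\mu_{r} = 0$ on $M$ if and only if $F(\mu_{r}) = 0$ on $F(M)$, the latter being equivalent to $r \in \Ann_{R}(F(M))$. Because any equivalence of categories is faithful, $F(\mu_{r}) = 0$ forces $\mu_{r} = 0$, and conversely functors send zero morphisms to zero. This gives $\Ann_{R}(M) = \Ann_{R}(F(M))$ and combines with the previous display to complete the proof.

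The only step that requires genuine care is the identification of $F(\mu_{r})$ with multiplication by $r$ on $F(M)$; everything else is a formal consequence of $F$ being an equivalence together with the reduction to Theorem~\ref{thm:fitt-ann-comm}. This verification is essentially the same kind of computation that appears in the annihilator part of Lemma~\ref{lem:auto-equivs}, and one could alternatively phrase it by saying that the image of the natural map $R \to \End_{\Lambda}(M)$ and the image of $R \to \End_{R}(F(M))$ correspond under the ring isomorphism $\End_{\Lambda}(M) \xrightarrow{\sim} \End_{R}(F(M))$ induced by $F$. Either way, the argument is short and bypasses any need to track an explicit presentation of $M$ over $\Lambda$.
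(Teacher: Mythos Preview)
Your proof is correct but takes a genuinely different route from the paper's. Both arguments begin identically, applying Theorem~\ref{thm:fitt-ann-comm} to obtain $\Fitt_{\Lambda}(M)=\Fitt_{R}(F(M))\subseteq\Ann_{R}(F(M))$; the difference lies in how the annihilation is transported back to $M$. The paper argues concretely from the progenerator property of $P^{\ast}$: since $\Lambda_{\Lambda}$ is a direct summand of $(P^{\ast})^{n}$ for some $n$, tensoring over $\Lambda$ with $M$ exhibits $M$ as a direct summand of $F(M)^{n}$, and any element annihilating $F(M)$ therefore annihilates $M$. You instead work at the level of endomorphism rings: after identifying $F(\mu_{r})$ with multiplication by $r$ on $F(M)$, faithfulness of the equivalence $F$ gives $\mu_{r}=0 \Leftrightarrow F(\mu_{r})=0$, hence the two-sided equality $\Ann_{R}(M)=\Ann_{R}(F(M))$. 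Your approach is more categorical and delivers a stronger conclusion (equality rather than a one-sided inclusion) with no reference to an explicit splitting; the paper's is more hands-on and stays closer to the module-theoretic definition of a progenerator. Both proofs ultimately rely on the same compatibility between the left $R$-action on $P^{\ast}$ and the right $\Lambda$-action restricted to $R=\zeta(\Lambda)$, which you correctly flag as the one step requiring care and which is likewise implicit in the paper's argument and in the proof of Lemma~\ref{lem:auto-equivs}.
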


\begin{proof}
	Let $x \in \Fitt_{\Lambda}(M) = \Fitt_{R}(F(M))$.
	Then $x$ annihilates $F(M) = P^{\ast} \otimes_{\Lambda} M$ by the corresponding property of Fitting ideals over the commutative ring $R$.
	Hence $x$ also annihilates $F(M)^{k}$ for every $k \in \N$.
	As $P^{\ast}$ is in particular a progenerator for $\mathfrak{M}_{\Lambda}$, there exists $n \in \N$ such that 
	$\Lambda_{\Lambda}$ is a direct summand of $(P^{\ast})^{n}$.
	Hence $M = \Lambda \otimes_{\Lambda} M$ occurs as a direct summand
	of $F(M)^{n} = (P^{\ast})^{n} \otimes_{\Lambda} M$ and thus is annihilated by $x$, as desired.
\end{proof}

Finally, we formulate an analogue of Example \ref{ex:R/I}.

\begin{proposition} \label{prop:quotient-by-I}
	Let $I$ be a two-sided ideal of $\Lambda$.
	Under the identification $\Lambda \simeq \End_{R}(P)$ we have
	$I = \Hom_{R}(P, \mathfrak a \cdot P)$ for a uniquely determined $R$-ideal $\mathfrak{a}$.
	Then we have an equality
	\[
	\Fitt_{\Lambda}(\Lambda / I) = \Fitt_{R}(\Hom_{R}(P, R / \mathfrak{a})).
	\]
\end{proposition}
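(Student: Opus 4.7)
The plan is to apply the category equivalence $F = P^{\ast} \otimes_{\Lambda} -$ to the short exact sequence
\[
0 \longrightarrow I \longrightarrow \Lambda \longrightarrow \Lambda/I \longrightarrow 0.
\]
Since $F$ is an equivalence it is exact, so this produces an exact sequence
\[
0 \longrightarrow P^{\ast} \cdot I \longrightarrow P^{\ast} \longrightarrow F(\Lambda/I) \longrightarrow 0,
\]
where I have used $F(\Lambda) = P^{\ast} \otimes_{\Lambda} \Lambda = P^{\ast}$ and the fact that the natural map $P^{\ast} \otimes_{\Lambda} I \to P^{\ast}$ is injective with image the $R$-submodule $P^{\ast} \cdot I$ generated by the products $f \cdot \lambda$ with $f \in P^{\ast}$ and $\lambda \in I$.

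The main step, and the chief obstacle, is to identify $P^{\ast} \cdot I$ with $\Hom_{R}(P, \mathfrak{a})$ as submodules of $P^{\ast} = \Hom_{R}(P, R)$. One containment is immediate: for $f \in P^{\ast}$ and $\lambda \in I = \Hom_{R}(P, \mathfrak{a} P)$, one computes
\[
(f \cdot \lambda)(p) = f(\lambda(p)) \in f(\mathfrak{a} P) \subseteq \mathfrak{a}
\]
for every $p \in P$, so $f \cdot \lambda \in \Hom_{R}(P, \mathfrak{a})$. For the reverse containment I would invoke the dual basis lemma for the finitely generated projective right $R$-module $P_{R}$: choose $p_{1}, \dots, p_{n} \in P$ and $f_{1}, \dots, f_{n} \in P^{\ast}$ such that $p = \sum_{j} p_{j} \cdot f_{j}(p)$ for every $p \in P$. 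Given $g \in \Hom_{R}(P, \mathfrak{a})$, define $\lambda_{j} \in \Hom_{R}(P, \mathfrak{a} P) = I$ by $\lambda_{j}(p) := p \cdot g(p_{j})$; this makes sense because $g(p_{j}) \in \mathfrak{a}$, and is $R$-linear since $P_{R}$ is a right $R$-module. Then
\[
\sum_{j} (f_{j} \cdot \lambda_{j})(p) = \sum_{j} f_{j}(p \cdot g(p_{j})) = \sum_{j} f_{j}(p) g(p_{j}) = g\Big(\sum_{j} p_{j} \cdot f_{j}(p)\Big) = g(p),
\]
so $g = \sum_{j} f_{j} \cdot \lambda_{j} \in P^{\ast} \cdot I$.

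It remains to assemble the pieces. Since $P_{R}$ is projective, the functor $\Hom_{R}(P, -)$ is exact, so applying it to $0 \to \mathfrak{a} \to R \to R/\mathfrak{a} \to 0$ yields
\[
0 \longrightarrow \Hom_{R}(P, \mathfrak{a}) \longrightarrow \Hom_{R}(P, R) \longrightarrow \Hom_{R}(P, R/\mathfrak{a}) \longrightarrow 0.
\]
Combining this with the identification of $P^{\ast} \cdot I$ above gives an isomorphism $F(\Lambda/I) \simeq \Hom_{R}(P, R/\mathfrak{a})$, and Definition~\ref{def:Morita-equiv-Fitt} then yields the desired equality $\Fitt_{\Lambda}(\Lambda/I) = \Fitt_{R}(F(\Lambda/I)) = \Fitt_{R}(\Hom_{R}(P, R/\mathfrak{a}))$.
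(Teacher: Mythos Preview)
Your proof is correct and follows the same overall architecture as the paper's: apply $F$ to the short exact sequence $0 \to I \to \Lambda \to \Lambda/I \to 0$, identify the image of $P^{\ast} \otimes_{\Lambda} I$ in $P^{\ast}$ with $\Hom_{R}(P,\mathfrak{a})$, and then use projectivity of $P$ to conclude that $F(\Lambda/I) \simeq \Hom_{R}(P,R/\mathfrak{a})$.

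The one point of divergence is how you establish the reverse containment $\Hom_{R}(P,\mathfrak{a}) \subseteq P^{\ast}\cdot I$. The paper checks surjectivity of the map $\alpha$ after localising at each prime $\mathfrak{p}$ of $R$, where $P$ becomes free and the claim is immediate. You instead give a direct global argument using a dual basis $\{p_{j},f_{j}\}$ for the projective module $P_{R}$, explicitly writing any $g \in \Hom_{R}(P,\mathfrak{a})$ as $\sum_{j} f_{j}\cdot\lambda_{j}$ with $\lambda_{j}(p) = p\cdot g(p_{j}) \in I$. Both arguments are standard; yours is slightly more constructive and avoids the localisation step, while the paper's is perhaps conceptually cleaner in reducing to the free case.
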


\begin{proof}
	That $I$ is of the given form is \cite[Theorem 16.14(v)]{MR1972204}.
	The canonical short exact sequence
	$0 \rightarrow I \rightarrow \Lambda \rightarrow \Lambda / I \rightarrow 0$ yields a short exact sequence
	\[
	0 \rightarrow P^{\ast} \otimes_{\Lambda} I \rightarrow P^{\ast} \rightarrow P^{\ast} \otimes_{\Lambda} \Lambda / I \rightarrow 0.
	\]
	We claim that the image of $P^{\ast} \otimes_{\Lambda} I $ in $P^{\ast}$ equals
	$\Hom_{R}(P, \mathfrak{a})$.
	In fact, the map
	\[
	P^{\ast} \otimes_{\Lambda} I = \Hom_{R}(P,R) \otimes_{\Lambda}
	\Hom_{R}(P, \mathfrak{a} \cdot P) \longrightarrow P^{\ast} = \Hom_{R}(P,R)
	\]
	is given by $f \otimes g \mapsto f \circ g$, where $f \in \Hom_{R}(P,R)$ and
	$g \in \Hom_{R}(P, \mathfrak{a} \cdot P)$.
	As the image of $g$ lies in $\mathfrak{a} \cdot P$ and $f$ is $R$-linear, the image of $f \circ g$ actually lies in $\Hom_{R}(P, \mathfrak{a})$.
	Thus we in fact have a map
	\[
	\alpha: P^{\ast} \otimes_{\Lambda} I = \Hom_{R}(P,R) \otimes_{\Lambda} \Hom_{R}(P, \mathfrak a \cdot P) \longrightarrow \Hom_{R}(P,\mathfrak{a}).
	\]
	To show that $\alpha$ is surjective (and thus an isomorphism), it suffices to show the corresponding statement after localisation at each prime ideal $\mathfrak{p}$ of $R$.
	However, a projective module over a local ring is free, so there exists $n \in \N$
	such that $P \simeq R^{n}$.
	Via this isomorphism, both the domain and codomain of $\alpha$
	identify naturally with $\oplus_{i=1}^{n} \mathfrak{a}$ and $\alpha$ becomes the identity map.
	Hence $\alpha$ is an isomorphism.
	
	We have shown that we have an isomorphism of $R$-modules
	\[
	P^{\ast} \otimes_{\Lambda} \Lambda / I \simeq \Hom_{R}(P,R) / \Hom_{R}(P,\mathfrak{a}) \simeq
	\Hom_{R}(P, R / \mathfrak{a}),
	\]
	where the last isomorphism holds by projectivity of $P$. The result now follows.
\end{proof}

\begin{example}
	Suppose that $P$ is free of rank $n$ over $R$ for some $n \in \N$.
	Then $\Lambda$
	naturally identifies with the matrix ring $M_{n \times n}(R)$.
	Moreover, $I = M_{n \times n}(\mathfrak a)$ and so
	$\Lambda/ I = M_{n \times n}(R/\mathfrak a)$. Thus we have
	$\Fitt_{\Lambda}(\Lambda/I) = \mathfrak a^n$ and so Proposition
	\ref{prop:quotient-by-I} recovers 
	\cite[Theorem 2.2(ix)]{MR3092262}.
	One can also view $\Lambda/I$ as a module over the centre $R$,
	but $\Fitt_{R}(\Lambda/ I) = \mathfrak{a}^{n^2}$ is properly contained in
	$\Fitt_{\Lambda}(\Lambda/I)$ if $n>1$ (also see Remark \ref{rmk:Fitt-over-R-vs-Lambda}).
\end{example}

\subsection*{Acknowledgements}
The first named author was supported by EPSRC First Grant EP/N005716/1 
`Equivariant Conjectures in Arithmetic'.
The second named author acknowledges financial support provided by the 
Deutsche Forschungsgemeinschaft (DFG) 
within the Heisenberg programme (No.\, NI 1230/3-1).

\nocite*
\bibliography{fitting-survey-bib}{}
\bibliographystyle{amsalpha}

\end{document}